\documentclass[12pt]{amsart}
\usepackage{amsmath}
\usepackage{amssymb}
\usepackage[all]{xy}
\usepackage{longtable}
\usepackage[mathscr]{eucal}
\usepackage{xcolor}
\usepackage[bookmarks=false]{hyperref}
\usepackage{multirow,bigdelim}
\usepackage{nccmath}
\usepackage{url}
\usepackage{comment}
\usepackage{mathrsfs}
\usepackage{arydshln}

\usepackage{amsmath,amsthm,amssymb,mathtools,amsfonts,amsopn,amscd}
\usepackage{bm}
\usepackage{tikz}
\usepackage{tikz-cd}    
\usepackage{rotating}
\usepackage{commath}    
\usepackage{mathtools}    
\usepackage{graphicx}
\usepackage{etoolbox}
\usepackage{enumitem}
\usepackage{stmaryrd}
\usepackage{hyperref}
\usepackage{mathrsfs}
\usepackage{calc}
\usepackage[normalem]{ulem}
\usepackage[numbered]{bookmark}  

\setlist[enumerate,1]{label=(\roman*)}

\usepackage[
backend=biber,
style=alphabetic,
maxalphanames=10,
maxbibnames=10,
sorting=anyt,
]{biblatex}

\theoremstyle{plain}
\newtheorem{thm}{Theorem}[section]
\newtheorem{lem}[thm]{Lemma}
\newtheorem{prop}[thm]{Proposition}
\newtheorem{cor}[thm]{Corollary}

\theoremstyle{definition}
\newtheorem{df}[thm]{Definition}

\theoremstyle{remark}
\newtheorem{rem}[thm]{Remark}

\allowdisplaybreaks[1]

\newcommand{\bbN}{\mathbb{N}}
\newcommand{\ZZ}{\mathbb{Z}}
\newcommand{\NN}{\mathbb{N}}

\newcommand{\RR}{\mathbb{R}}
\newcommand{\CC}{\mathbb{C}}

\newcommand{\FF}{\mathbb{F}}
\newcommand{\GG}{\mathbb{G}}

\newcommand{\Fp}{\FF_p}

\newcommand{\bbF}{\mathbb{F}}

\newcommand{\afk}{\mathfrak{a}}
\newcommand{\bfk}{\mathfrak{b}}
\newcommand{\cfk}{\mathfrak{c}}

\newcommand{\Acal}{\mathcal{A}}
\newcommand{\Bcal}{\mathcal{B}}
\newcommand{\Ccal}{\mathcal{C}}

\newcommand{\Ecal}{\mathcal{E}}

\newcommand{\Ical}{\mathcal{I}}

\newcommand{\Ocal}{\mathcal{O}}

\newcommand{\Rcal}{\mathcal{R}}
\newcommand{\Scal}{\mathcal{S}}
\newcommand{\Tcal}{\mathcal{T}}

\newcommand{\Ycal}{\mathcal{Y}}
\newcommand{\Zcal}{\mathcal{Z}}

\newcommand{\End}{\operatorname{End}}

\newcommand{\id}{\operatorname{id}}

\newcommand{\GL}{\operatorname{GL}}

\newcommand{\Spec}{\operatorname{Spec}}

\newcommand{\ord}{\operatorname{ord}}

\newcommand{\sbe}{\subseteq}

\let\oldforall\forall
\renewcommand{\forall}{\oldforall \: }
\let\oldexist\exists
\renewcommand{\exists}{\oldexist \: }

\let\emptyset\varnothing

\usepackage{stmaryrd}
\SetSymbolFont{stmry}{bold}{U}{stmry}{m}{n}

\newcommand{\dep}{\operatorname{dep}}
\newcommand{\wt}{\operatorname{wt}}

\newcommand{\ww}[1]{\underline{#1}} 

\newcommand{\bfe}{\widehat{e}}
\newcommand{\bfE}{\widehat{E}}

\newcommand{\bff}{\bm{f}}
\newcommand{\bfg}{\bm{g}}

\newcommand{\bfz}{\mathbf{z}}

\newcommand{\Lamz}{\Lambda_{\bfz'}}

\DeclareMathOperator{\Span}{span}
\newcommand{\powerseries}[1]{[\![{#1}]\!]} 

\allowdisplaybreaks

\newcommand{\ang}[1]{\langle #1 \rangle}

\makeatletter
\newcommand{\xequal}[2][]{\ext@arrow 0055{\equalfill@}{#1}{#2}}
\def\equalfill@{\arrowfill@\Relbar\Relbar\Relbar}
\makeatother

\title[Algebra Structures of Multiple Eisenstein Series]{Algebra Structures of Multiple Eisenstein Series in Positive Characteristic}
\author{Ting-Wei Chang, Song-Yun Chen, Fei-Jun Huang, Hung-Chun Tsui}
\date{\today}

\address{}
\email{}

\thanks{The first author is supported by the National Science and Technology Council grant no. 113-2628-M-007-003.
The latter three authors are supported by the National Science and Technology Council grant no. 113-2628-M-007-004.}

\keywords{Function field, Multiple zeta values, Multiple Eisenstein series, $q$-shuffle algebras}

\subjclass[2020]{Primary 11M32; Secondary 11M36, 11M38, 11R58}

\begin{document}

\begin{abstract}
    In \cite{CCHT2025}, the authors introduced multiple Eisenstein series of arbitrary rank in positive characteristic and the $q$-shuffle algebra $\mathcal{E}$ associated with them.
    In the present paper, we establish a class of linear independence results for multiple Eisenstein series.
    We also prove that the $q$-shuffle algebra $\mathcal{R}$ of multiple zeta values embeds into the inverse limit of the spaces of multiple Eisenstein series with respect to the rank $r$, and that $\mathcal{E}$ is isomorphic to the tensor square of $\Rcal$.
    As an application, we show that $\Ecal$ is an associative algebra, thereby verifying the conjecture proposed in \cite{CCHT2025}.
\end{abstract}

\maketitle
\tableofcontents
\section{Introduction}

\subsection{The \texorpdfstring{$q$}{q}-shuffle relation for multiple Eisenstein series}

Fix a prime $p$ and let $\FF_p$ be the prime field of order $p$. Let $\FF_q$ be a finite extension of $\FF_p$ containing $q$ elements.
Let $A := \FF_q[\theta]$ denote the polynomial ring in the variable $\theta$ over $\FF_q$ and $K := \mathbb{F}_q(\theta)$ be the field of fractions of $A$. 
We denote by $K_\infty$ the completion of $K$ at the infinite place and by $\mathbb{C}_\infty$ the completion of a fixed algebraic closure of $K_\infty$. 
We also let $A_+$ be the set of all monic polynomials in $A$.

Let $\mathbb{N}$ be the set of positive integers. For $r \in \mathbb{N}$, let $\Omega^r$ denote the Drinfeld symmetric space of rank $r$, defined as the set of points $[z_1 : \cdots : z_r] \in \mathbb{P}^{r-1}(\mathbb{C}_\infty)$ whose coordinates are $K_\infty$-linearly independent.
As established in \cite[Proposition 6.1]{Drinfeld1974}, $\Omega^r$ carries a rigid analytic structure.
Under the normalization $z_r=1$, we view $\Omega^r$ as the subset of $\mathbb{C}_\infty^r$ given by
\[
\Omega^r = \{ (z_1, \ldots, z_r) \in \mathbb{C}_\infty^r : z_1, \dots, z_{r}, \text{ are } K_\infty\text{-linearly independent and }z_r=1 \}.
\]
We denote by $\Ocal(\Omega^r)$ the algebra of rigid analytic functions on $\Omega^r$.

We now recall the partial order on $A$-lattices introduced in \cite[Definition 1.3]{CCHT2025}, which generalizes the definition provided in \cite{Chen2017} for rank two case.
For $\bff = (f_1,\ldots,f_r)\in A^r$ and $\bfz = (z_1,\ldots,z_r = 1) \in \Omega^r$, we put
\[
\ang{\bff,\bfz} := \sum_{i=1}^r f_iz_i \in \CC_\infty.
\]

\begin{df}
    Let $r\geq 1$, $\bfz = (z_1,\ldots,z_{r-1},z_r=1) \in \Omega^r$ and  $\bff,\bfg\in A^r$.
    Write $\bff=(f_1,\ldots,f_r)$ and $\bfg=(g_1,\ldots,g_r)$.
    We define a partial order $\succ$ on $Az_1+ \cdots +Az_{r-1}+A$ as follows:
    \begin{enumerate}
        \item $\ang{\bff,\bfz}\succ 0$ if $f_1=\cdots=f_{i-1}=0$ and $f_i\in A_+$ for some $1\leq i\leq r$.
        \item For $\ang{\bff,\bfz}, \ang{\bfg,\bfz}\succ 0$, we write $\ang{\bff,\bfz} \succ \ang{\bfg,\bfz}$ if one of the following holds:
        \begin{enumerate}
            \item There exists $1\leq i\leq r$ such that
            $f_1=\cdots=f_{i-1}=g_1=\cdots=g_{i-1}=0$ and
            $f_i,g_i\in A_+$ with $\deg f_i>\deg g_i$.
            \item There exists $1\leq i\leq r$ such that
            $f_1=\cdots=f_{i-1}=g_1=\cdots=g_{i-1}=0$ and
            $f_i\in A_+,g_i=0$.
        \end{enumerate}
    \end{enumerate}
\end{df}

For convenience, let
\[
\Ical:=\bigcup_{m=1}^\infty \NN^m\cup\{\varnothing\}
\]
denote the set of all indices. For a non-empty index $\ww{a}=(a_1,\ldots,a_m)\in\Ical$, we define its \textit{depth} and \textit{weight} by
\[
\dep(\ww{a})=m\quad \text{and} \quad \wt(\ww{a})=a_1+\cdots+a_m,
\]
respectively.
For the empty index $\varnothing$, we put $\dep(\varnothing)=0=\wt(\varnothing)$.
For each $w\in\ZZ_{\geq 0}$, we further denote the subset of indices whose weight is at most $w$ by
\[
\Ical_{\leq w}=\{\ww{a}\in \Ical:\wt(\ww{a})\leq w\}.
\]

We now recall the rank $r$ multiple Eisenstein series introduced in \cite[Definition 1.4]{CCHT2025}, which serve as a generalization of the single and double Eisenstein series on $\Omega^2$ defined by Chen \cite{Chen2017}. It is also worth noting that Pellarin \cite{Pel2025} previously proposed vector-valued versions of multiple Eisenstein series in the rank two case.

\begin{df}[Multiple Eisenstein series] \label{df-MES-positive-charcteristic}
    Let $r\geq 1$. 
    For any non-empty index $\ww{a}=(a_1,\ldots,a_m)\in\Ical$, we define the \textit{rank $r$ multiple Eisenstein series} on $\Omega^r$ by 
    \[
    E_r(\ww{a};\bfz)
    := \sum_{\substack{\bff_1,\ldots,\bff_m \in A^r \\ \ang{\bff_1,\bfz} \succ \cdots \succ \ang{\bff_m,\bfz} \succ 0}}
    \frac{1}{\ang{\bff_1,\bfz}^{a_1}\cdots \ang{\bff_m,\bfz}^{a_m}}.
    \]
    By convention, we put   $E_r(\emptyset;\mathbf{z}):=1$.
\end{df}

It is shown in \cite[Proposition 2.1]{CCHT2025} that the rank $r$ multiple Eisenstein series are rigid analytic functions on $\Omega^r$, i.e., $E_r(\ww{a};\bfz) \in \Ocal(\Omega^r)$. 
In addition, we remark that when $r=1$, they become Thakur's multiple zeta values \cite{thakur2004function}.

It is well-known from \cite{thakur2010shuffle} that the product of Thakur's multiple zeta values can be written as an $\FF_p$-linear combination of multiple zeta values of the same weight, commonly known as the \textit{$q$-shuffle relation}.
The explicit formula is given by the $q$-shuffle algebra $\mathcal{R}$, whose algebra structure was first observed by Yamamoto based on an explicit $q$-shuffle formula of Chen \cite[Theorem 3.1]{Chen2015}, and was subsequently formalized by Shi in her PhD thesis \cite{Shi2018}. This $q$-shuffle algebra $\Rcal$ is defined as follows:

\begin{df}   \label{df-zeta-values-as-words}
    Let $\Scal$ be the free monoid generated by the set $\{x_k : k \in \NN\}$ and let $\Rcal$ be the $\FF_p$-vector space generated by $\Scal$. For a non-empty index $\ww{a}=(a_1,\ldots,a_m)\in\Ical$, we define $x_{\ww{a}}:=x_{a_1}\cdots x_{a_m}$, and for $\ww{a}=\emptyset$, we define $x_\emptyset:=1$.
    An element $x_{\ww{a}}$ is called a \textit{word of depth $m$}.
    When $m \geq 2$, we put $x_{\ww{a}^-}:=x_{a_2}\cdots x_{a_m}$, and when $m=1$, we put $x_{\ww{a}^-}:=x_{\emptyset} = 1$.
    We define the \textit{$q$-shuffle product} $\ast$ on $\Rcal$ inductively on the sum of depths as follows:
    \begin{enumerate}
        \item For the empty word $x_\emptyset=1$ and any $\afk\in\Rcal$, define
        \[
        1*\afk=\afk*1=\afk.
        \]
        \item For non-empty indices $\ww{a}=(a_1,\ldots,a_m)\in\Ical$ and $\ww{b}=(b_1,\ldots,b_n)\in\Ical$, define
        \begin{multline*}
            x_{\ww{a}} * x_{\ww{b}}
            = x_{a_1} ( x_{\ww{a}^-} * x_{\ww{b}} ) 
            + x_{b_1} ( x_{\ww{a}} * x_{\ww{b}^-} ) 
            + x_{a_1 + b_1} ( x_{\ww{a}^-} * x_{\ww{b}^-} ) \\
            + \sum_{\substack{i+j = a_1 + b_1 \\ q-1 \mid  j}} 
            \Delta^{i,j}_{a_1,b_1} x_i ( ( x_{\ww{a}^-} * x_{\ww{b}^-} ) * x_j )
        \end{multline*}
        where
        \begin{enumerate}
            \item $\Delta^{i,j}_{a,b} := (-1)^{a-1}\binom{j-1}{a-1}+(-1)^{b-1}\binom{j-1}{b-1}\in\FF_p$.
            \item $x \sum_i \epsilon_ix_i := \sum_i \epsilon_i (xx_i)$ for $\epsilon_i \in \Fp$ and $x,x_i \in \Scal$.
        \end{enumerate}
        \item Expand the product $\ast$ to the $\Fp$-vector space $\Rcal$ by the distributive law.
    \end{enumerate}
\end{df}

In this way, $\Rcal$ is an $\FF_p$-algebra with respect to the $q$-shuffle product $\ast$.
We mention that throughout this paper, algebras are not necessarily assumed to be associative.

In \cite{CCHT2025}, we showed that the multiple Eisenstein series satisfy the same $q$-shuffle relations as Thakur's multiple zeta values do, thereby extending Shi's result \cite{Shi2018} for the rank one case to arbitrary rank. More precisely, we have the following theorem:

\begin{thm}[{\cite[Theorem 1.6]{CCHT2025}}]\label{thm-E-hat}
    For $r\ge 1$, we define $\bfE_r: \Rcal \to \Ocal(\Omega^r)$ to be the unique $\FF_p$-linear map satisfying  
    \[
    \bfE_r(1) := 1
    \quad
    \text{and}
    \quad
    \bfE_r(x_{\ww{a}}) := E_r(\ww{a};\bfz).
    \]
    Then $\bfE_r$ is an $\FF_p$-algebra homomorphism, i.e.,
    \[
    \bfE_r(x_{\ww{a}} \ast x_{\ww{b}}) = E_r(\ww{a};\bfz) E_r(\ww{b};\bfz).
    \]
\end{thm}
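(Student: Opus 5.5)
We argue by induction on the sum of depths $\dep(\ww a)+\dep(\ww b)$, mirroring the recursive definition of $\ast$. The case where one word is empty is trivial. Otherwise we reduce everything to a ``first-term'' decomposition. For $\bff\in A^r$ with $\ang{\bff,\bfz}\succ 0$, set
\[
E_r(\ww a;\bfz)_{\prec \bff}:=\sum_{\ang{\bff,\bfz}\succ\ang{\bff_2,\bfz}\succ\cdots\succ\ang{\bff_m,\bfz}\succ0}\frac{1}{\ang{\bff_2,\bfz}^{a_2}\cdots\ang{\bff_m,\bfz}^{a_m}},
\]
so that $E_r(\ww a;\bfz)=\sum_{\bff\succ0}\ang{\bff,\bfz}^{-a_1}E_r(\ww a^-;\bfz)_{\prec\bff}$. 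We prove the stronger truncated statement by induction: for every ``level'' $\bff$ (equivalently, every position $i$ and degree $d$ of the leading coefficient), the product of the truncated sums $E_r(\ww a)_{\prec \bff}\,E_r(\ww b)_{\prec\bff}$ equals $\bfE_r$ applied to $x_{\ww a}\ast x_{\ww b}$, truncated at $\bff$. Here the partial order $\succ$ only compares a position index $i$ together with $\deg f_i$, so the truncation is a truncation by this pair.

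In the product $E_r(\ww a)E_r(\ww b)=\sum_{\bff,\bfg}\ang{\bff,\bfz}^{-a_1}\ang{\bfg,\bfz}^{-b_1}(\cdots)$ we split according to the relation between the leading terms $\bff$ and $\bfg$: either $\bff\succ\bfg$, or $\bfg\succ\bff$, or they are incomparable. Incomparable means the same position $i$ and equal $\deg f_i=\deg g_i$. By the induction hypothesis applied to the tails, the first two cases give $x_{a_1}(x_{\ww a^-}\ast x_{\ww b})$ and $x_{b_1}(x_{\ww a}\ast x_{\ww b^-})$. In the incomparable case the tails are truncated at the same level, and the induction hypothesis turns the tail product into $\bfE_r(x_{\ww a^-}\ast x_{\ww b^-})$ truncated at that level. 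The diagonal $\bff=\bfg$ contributes $x_{a_1+b_1}(\ldots)$. The off-diagonal pairs $\bff\neq\bfg$ at the same level require the partial fraction identity
\[
\frac{1}{u^a v^b}=\sum_{i+j=a+b}\Bigl((-1)^{a-1}\tbinom{j-1}{a-1}\frac{1}{u^i(v-u)^j}+(-1)^{b-1}\tbinom{j-1}{b-1}\frac{1}{v^i(u-v)^j}\Bigr)
\]
with $u=\ang{\bff,\bfz}$ and $v=\ang{\bfg,\bfz}$. The difference $\bfg-\bff$ has a strictly smaller leading part: either position $i$ with smaller degree (after scaling by a unit $\epsilon\in\FF_q^\times$ to make it monic), or a later position. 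Summing over $\epsilon\in\FF_q^\times$ produces the factor $\sum_\epsilon\epsilon^{-j}$, which is $-1$ if $q-1\mid j$ and $0$ otherwise. This gives exactly the terms indexed by $q-1\mid j$ with coefficient $\Delta^{i,j}_{a_1,b_1}$, up to the sign conventions, which must be checked to match.

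The resulting sum has the shape $\sum_{\bff}\ang{\bff,\bfz}^{-i}\sum_{\bff\succ \bm h\succ 0}\ang{\bm h,\bfz}^{-j}\cdot(\text{tails truncated at }\bff)$. The tails appear as a product $(x_{\ww a^-}\ast x_{\ww b^-})$ truncated at $\bff$, times an extra factor $\ang{\bm h}^{-j}$ that is not ordered with respect to the tails. We rewrite this, again by the induction hypothesis, whose depth sum is smaller than $m+n$, as $(x_{\ww a^-}\ast x_{\ww b^-})\ast x_j$ truncated at $\bff$. This matches the definition of $\ast$.

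The main obstacle is this last step. The $\ast$-product involving $x_j$ is not of smaller depth in the naive sense, so the induction must be set up on the truncated identities with a well-founded measure: for example, weight first and then depth, or the pair $(\dep\ww a+\dep\ww b)$ together with the number of factors. We also have to verify that in rank $r>1$ the level structure (position $i$, degree) behaves exactly like degree in rank one. In particular, the difference $\bfg-\bff$ of two elements of equal level must run bijectively over all elements of strictly lower level times $\FF_q^\times$. This holds because, within a fixed level, the lower-order coordinates range over all of $A$ and the set of admissible $\bfg$ forms a coset. Once this bijection is established, the rank-one computation of Chen and Shi transfers verbatim.
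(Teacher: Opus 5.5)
Your strategy works, and it is a genuinely different route from the one behind this statement. The paper does not reprove the theorem; it quotes it from \cite{CCHT2025}. Judging from what it imports from that source, the cited proof climbs in rank. The imported material consists of the Goss expansion, the algebra $\Ecal$ (whose $x$-letters stand for rank $r-1$ series and whose $y$-letters stand for multiple Goss sums), and the map $\bfe$, which was established there only modulo the associator ideal. In that argument, lattice points are split according to whether the first coordinate vanishes, and the multiple Goss sums are shown to obey the $y$-type shuffle of $\Ecal$. Then $\bfE_r$ is recovered through $\bfe$, starting from the rank-one theorem of Thakur, Chen and Shi.

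You instead fix $r$ and use the whole chain of levels $(i,\deg f_i)$ at once. The core computation is the same: for $\bff,\bfg$ at a level $(1,d)$, the difference falls either into a lower position-one level or into $\Lambda_{\bfz'}$, which is exactly why $\bfe(x_j)=x_j+y_j$ appears. Your route avoids $\Ecal$ and its associativity problem entirely, and it recovers rank one as a special case rather than using it as input. What the rank-induction route buys in exchange is the $q$-shuffle structure on multiple Goss sums and the pair $(\Ecal,\bfe)$, which this paper later needs for $\Ecal\cong\Rcal\otimes_{\FF_p}\Rcal$.

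Several points need fixing.

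\textbf{The partial-fraction identity.} The identity you display is false. For $a=2$, $b=1$ its right side is $-\frac{1}{u(v-u)^2}-\frac{1}{v^2(v-u)}+\frac{1}{v(v-u)^2}$, which equals $-3/4$ at $u=1$, $v=2$, whereas $1/(u^2v)=1/2$. Used as written, it would also produce $-\Delta^{i,j}_{a,b}$. The correct identity (with $i,j\ge 1$) is
\[
\frac{1}{u^av^b}=\sum_{i+j=a+b}\Bigl((-1)^{b}\tbinom{j-1}{b-1}\frac{1}{u^i(u-v)^j}+(-1)^{a}\tbinom{j-1}{a-1}\frac{1}{v^i(v-u)^j}\Bigr).
\]
In each half, once the element carrying the exponent $i$ is fixed, the difference in the other factor runs over $\ang{\epsilon\bm h,\bfz}$ with $\epsilon\in\FF_q^\times$ and $\bm h$ of strictly lower level. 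So both halves give $\bigl(\sum_{\epsilon}\epsilon^{-j}\bigr)$ times the same product of level sums. Since $\sum_\epsilon\epsilon^{-j}$ is $-1$ for $q-1\mid j$ and $0$ otherwise, this yields exactly $\Delta^{i,j}_{a,b}$.

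\textbf{The ``main obstacle'' is not one.} Every word in $x_{\ww a}\ast x_{\ww b}$ has depth at most $\dep(\ww a)+\dep(\ww b)$. Hence $(x_{\ww a^-}\ast x_{\ww b^-})\ast x_j$ involves only products of depth sum at most $m+n-1$. This is the same fact that makes the recursive definition of $\ast$ well founded. Plain induction on the depth sum suffices, with the truncated statement proved for all levels simultaneously.

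\textbf{Summability.} A level is an infinite set when $r\ge 2$, so you should state once that all families involved are unconditionally summable. The terms tend to $0$ because $|\ang{\bff,\bfz}|\ge c\max_k|f_k|$ for some $c>0$ depending on $\bfz\in\Omega^r$. This licenses the splitting of the double sum and the reindexing $\bfg=\bff+\epsilon\bm h$.
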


\subsection{Main results}

With the necessary notions introduced, we are now ready to state the main results of this paper on the space of multiple Eisenstein series and the algebra structure of the $q$-shuffle algebra $\Ecal$ as defined in \cite[Definition 3.1]{CCHT2025} (see Definition~\ref{df-Ecal} also).

\begin{df}\label{df-space-of-real-Rcal}
    Let $L \sbe \CC_\infty$ be an $\FF_p$-subalgebra.
    For $r\geq 1$ and $w\in\ZZ_{\geq 0}$, we let 
    \[
    \mathcal{Z}^{(r)}:=\Span_{\FF_p}\{E_r(\ww{a};\bfz):\ww{a}\in\Ical\}
    \quad
    \text{and}
    \quad
    \mathcal{Z}_{L}^{(r)}:=\Span_{L}\{E_r(\ww{a};\bfz):\ww{a}\in\Ical\}
    \] 
    be the $\FF_p$-vector space and the $L$-vector space spanned by all multiple Eisenstein series $E_r(\ww{a}; \bfz)$ of rank $r$, respectively.
\end{df}

We mention that by Theorem \ref{thm-E-hat}, $\Zcal_L^{(r)}$ forms an $L$-algebra for any $\FF_p$-subalgebra $L\subseteq\CC_\infty$.

The following theorem is obtained by analyzing the $t_{\Lambda_{\bfz'}}$-expansion together with the linear independence of multiple Eisenstein series (see \S~\ref{sec-t-expansion-of-MES}).

\begin{thm}[restated as Proposition~\ref{prop-taking-constant-map} and Theorem~\ref{thm-Rcal-inverse-limit}] \label{thm-main-thm1}
    Let $L\subseteq \CC_\infty$ be an $\Fp$-subalgebra and $r\geq 1$.
    \begin{enumerate}
    \item  There is a unique $L$-algebra homomorphism
    \[
    \pi_{r+1,L}:\mathcal{Z}^{(r+1)}_L\to \mathcal{Z}^{(r)}_L
    \]
    such that for all $\ww{a}\in\Ical$,
    \[
    \pi_{r+1,L}(E_{r+1}(\ww{a};\bfz))=E_r(\ww{a};\bfz).
    \]

    \item The realization map $\bfE_r : \Rcal \to \Zcal^{(r)} \subseteq \Ocal(\Omega^r)$ induces an injective $L$-algebra homomorphism
    \[
    \bfE_L:L\otimes_{\Fp} \Rcal \hookrightarrow \varprojlim_{r\in\NN} \Zcal_L^{(r)},
    \quad
    c\otimes x_{\ww{a}} \mapsto (cE_{r}(\ww{a};\bfz))_{r\in\NN},
    \]
    with respect to the inverse system
    $\pi_{r+1,L}:\mathcal{Z}^{(r+1)}_L\to \mathcal{Z}^{(r)}_L$.
    \end{enumerate}
\end{thm}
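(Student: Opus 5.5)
We will obtain $\pi_{r+1,L}$ as a ``constant term'' map for the expansion in the parameter $t_{\Lamz}$, and then obtain (ii) from a linear independence statement.

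\textbf{Expansion.} Write $\bfz=(z_1,\bfz')$ with $\bfz'=(z_2,\ldots,z_{r+1})\in\Omega^r$, and let $\Lamz=Az_2+\cdots+Az_r+A$. For any $\bff\in A^{r+1}$ with first coordinate $0$, the pairing $\ang{\bff,\bfz}$ is a pairing $\ang{\bff',\bfz'}$, and the order $\succ$ restricts to the rank $r$ order.

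Split the chain $\ang{\bff_1,\bfz}\succ\cdots\succ\ang{\bff_m,\bfz}\succ 0$ at the last index $k$ with first coordinate $f_{k,1}\in A_+$. Then
\[
E_{r+1}(\ww{a};\bfz)=\sum_{k=0}^{m}G_k(\ww{a};\bfz)\,E_r(a_{k+1},\ldots,a_m;\bfz'),
\]
where $G_0=1$. For $k\ge1$, the factor $G_k$ is the sum over chains whose first $k$ terms all have monic first coordinates, with strictly decreasing degrees.

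Each such term with $k\geq 1$ is a sum over $a\in A_+$ of $\sum_{\lambda\in\Lamz}(az_1+\lambda)^{-n}$. By the usual Goss-polynomial argument, this equals $G_n(t_{\Lamz}(az_1))$, where $t_{\Lamz}(u)=1/e_{\Lamz}(\tilde\pi u)$ up to normalization. Summing over $a$ and over the nested chain, we get a power series in $t_{\Lamz}(z_1)$ with coefficients in $\Zcal^{(r)}$, and all terms with $k\geq1$ have no constant term.

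Hence $E_{r+1}(\ww{a};\bfz)=E_r(\ww{a};\bfz')+(\text{terms of positive order in }t_{\Lamz})$. This matches the analogue of Chen's rank two computation, and I would take it from \S~\ref{sec-t-expansion-of-MES}.

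\textbf{Part (i).} Define $\pi_{r+1,L}$ as ``take the constant term of the $t_{\Lamz}$-expansion.'' It is well defined on all of $\Ocal$ functions admitting such expansions, and it is $L$-linear. It is multiplicative, because the expansion of a product is the product of the expansions, with convergence for $|z_1|_{\mathrm{im}}$ large. Uniqueness is automatic, since the $E_{r+1}(\ww{a};\bfz)$ span $\Zcal^{(r+1)}_L$.

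The real point is showing that the constant term is an honest map on the span. Suppose a combination $\sum c_{\ww{a}}E_{r+1}(\ww{a})$ vanishes as a function. Its expansion then vanishes identically, by uniqueness of Laurent/power expansions of rigid analytic functions in $t_{\Lamz}$ on a neighbourhood of the cusp. So its constant term $\sum c_{\ww a}E_r(\ww a;\bfz')$ vanishes too.

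\textbf{Part (ii).} Compatibility with the $\pi$'s is immediate from (i), so $\bfE_L$ lands in the inverse limit and is an $L$-algebra homomorphism by Theorem~\ref{thm-E-hat}.

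For injectivity, take $\sum c_{\ww a}\otimes x_{\ww a}$ with the $c_{\ww a}$ chosen $\FF_p$-linearly independent in $L$, over finitely many indices of weight $\le w$. It suffices to show that for $r$ large (say $r>\max\dep$ or $r>w$) the family $\{E_r(\ww a;\bfz):\ww a\in\Ical_{\le w}\}$ is linearly independent over $\CC_\infty$.

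I would prove this by induction on depth, expanding iteratively in $t$-parameters for $z_1,z_2,\ldots$ in turn. The coefficient of the leading monomial of $E_r(\ww a)$ in the iterated expansion, namely $t(z_1)^{\ast}t(z_2)^{\ast}\cdots$ taking one variable per entry $a_i$, isolates the index $\ww a$: it is essentially the product of Goss polynomial leading terms in the separate variables $z_1,\ldots,z_m$. Consequently distinct indices have distinct leading terms, by a triangularity argument.

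\textbf{Main obstacle.} Making this leading-term triangularity precise is the step I expect to be hardest. The difficulty is that $q$-shuffle-type cross terms could produce coinciding monomials, and this has to be ruled out.
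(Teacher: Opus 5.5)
Your part (i), and your reduction of injectivity in (ii) to the $\CC_\infty$-linear independence of $\{E_r(\ww{a};\bfz):\ww{a}\in\Ical_{\le w}\}$ for large $r$, agree with the paper. The gap is that this linear independence, which is the whole content of (ii), is never proved, and the plan you sketch for it does not work as stated.

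The $t_{\Lamz}$-expansion is an infinite series. Beyond the constant term, its coefficients are functions on $\Omega^{r-1}$ built from the $\alpha_i(\bfz')$ and the coefficient forms of $\phi^{\Lamz}$. Nothing in your argument puts them in the span of multiple Eisenstein series, as you assert. So iterating the expansion in $z_2,z_3,\ldots$ brings in contributions you have not controlled, and ``leading monomial'' has no meaning until you say which coefficient you read off. There is no top term, and the bottom terms do not help:
\begin{itemize}
\item the constant term of a relation $\sum g_{\ww{a}}E_r(\ww{a};\bfz)=0$ is the same relation in rank $r-1$;
\item the lowest non-constant term of $G^{\Lamz}_{a_1}$ is in general not $t^{a_1}$, e.g.\ $G^{\Lamz}_{q+1}(X)=X^{q+1}+\alpha_1(\bfz')X^2$.
\end{itemize}
Your worry about $q$-shuffle cross terms is beside the point, since a linear relation involves no products.

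The missing idea is a degree gap in the first variable alone. By Proposition~\ref{prop-expansion-MES} and Lemma~\ref{lem-1/t-adic-valuation-of-G(t(fz))}, $\ord_r G_r(\ww{a}_{(i)};\bfz)>q^{r-1}$ for $i\ge 2$. The part of $G_r(\ww{a}_{(1)};\bfz)$ coming from $f\ne 1$ has order $\ge q^{r-1}$. Hence
\[
E_r(\ww{a};\bfz)=E_{r-1}(\ww{a};\bfz')+E_{r-1}(\ww{a}^{(1)};\bfz')\,G^{\Lamz}_{a_1}\bigl(t_{\Lamz}(z_1)\bigr)+\bigl(\text{terms of order }\geq q^{r-1}\bigr),
\]
with $G^{\Lamz}_{a_1}$ monic of degree $a_1$ and divisible by $X$. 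If $q^{r-1}>w$, the middle term lives in degrees $1,\ldots,w$, strictly below the tail.

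Now take $\sum g_{\ww{a}}E_r(\ww{a};\bfz)=0$ with $g_{\ww{a}}\in\CC_\infty$, and compare coefficients of $t_{\Lamz}(z_1)^n$ for $n=w,w-1,\ldots,1$. Once all $g_{\ww{a}}$ with $a_1>n$ are known to vanish, the coefficient of $t_{\Lamz}(z_1)^n$ is exactly $\sum_{a_1=n}g_{\ww{a}}E_{r-1}(\ww{a}^{(1)};\bfz')$. This is a relation with \emph{constant} coefficients among distinct rank $r-1$ series of weight $\le w-n\le w-1$. Induction on $w$, with the rank dropping by one, therefore kills these $g_{\ww{a}}$, and finally $g_\emptyset=0$.

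The condition $q^{r-1}>w$ must hold at every level of this recursion, which is why the rank has to grow with the weight. Your guess $r>w$ is in fact enough for this argument; the paper states the result for $r>(w+1)+\log_q(w+1)$.
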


In particular, Theorem~\ref{thm-main-thm1} (i) follows from the fact that the constant term of the $t_{\Lambda_{\bfz'}}$-expansion of the rank $r+1$ multiple Eisenstein series $E_{r+1}(\ww{a}; \bfz)$ is precisely $E_{r}(\ww{a}; \bfz)$.
On the other hand, as an application of Theorem~\ref{thm-main-thm1} (ii), we obtain the following corollary, which was proposed in \cite[Conjecture 3.2.2]{Shi2018}.

\begin{cor}[restated as Corollary~\ref{cor-assoc-of-R}]
    $(\mathcal{R},\ast)$ forms a commutative associative $\mathbb{F}_p$-algebra.
\end{cor}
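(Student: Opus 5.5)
The plan is to transport the structure of a ring of functions back to $\Rcal$ through the injective homomorphism of Theorem~\ref{thm-main-thm1}(ii). We apply it with $L=\Fp$. Then
\[
\bfE_{\Fp}:\Rcal\hookrightarrow\varprojlim_{r}\Zcal^{(r)}
\]
is an injective $\Fp$-algebra homomorphism. Each $\Zcal^{(r)}$ is an $\Fp$-subspace of $\Ocal(\Omega^r)$, and it is closed under multiplication by Theorem~\ref{thm-E-hat}. Its product is the pointwise product of rigid analytic functions, so each $\Zcal^{(r)}$ is a commutative associative $\Fp$-algebra with unit $1=E_r(\emptyset;\bfz)$.

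The transition maps $\pi_{r+1,\Fp}$ are algebra homomorphisms by Theorem~\ref{thm-main-thm1}(i). Hence the inverse limit is the subalgebra of $\prod_r\Zcal^{(r)}$ consisting of compatible sequences, with componentwise operations. It is therefore again commutative and associative.

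Now let $\afk,\bfk,\cfk\in\Rcal$. Since $\bfE_{\Fp}$ is multiplicative,
\[
\bfE_{\Fp}((\afk*\bfk)*\cfk)=\bfE_{\Fp}(\afk)\bfE_{\Fp}(\bfk)\bfE_{\Fp}(\cfk)=\bfE_{\Fp}(\afk*(\bfk*\cfk)).
\]
Injectivity then gives associativity of $*$. In the same way,
\[
\bfE_{\Fp}(\afk*\bfk)=\bfE_{\Fp}(\bfk*\afk)
\]
gives commutativity. Bilinearity holds by construction, and $1$ is a two-sided unit by definition.

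We must check that $\bfE_{\Fp}$ is multiplicative on all of $\Rcal$, and not only on pairs of words. Theorem~\ref{thm-E-hat} gives $\bfE_r(x_{\ww{a}}*x_{\ww{b}})=E_r(\ww{a};\bfz)E_r(\ww{b};\bfz)$ for words. Both sides are bilinear, so multiplicativity extends to all elements by distributivity. This holds in each rank $r$, so it holds componentwise in the limit.

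All of the real content lies in the injectivity in Theorem~\ref{thm-main-thm1}(ii), which we take as given. That injectivity itself rests on the linear independence of the families $\{E_r(\ww{a};\bfz)\}$ as $r$ grows, proved through the $t_{\Lambda_{\bfz'}}$-expansion. Once it is granted, the corollary is formal. The only remaining care point is the one above: the identities must be applied to arbitrary elements of $\Rcal$, which is why multiplicativity is first extended from words by bilinearity.
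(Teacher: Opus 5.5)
Your proposal is correct and takes the same approach as the paper. The paper deduces Corollary~\ref{cor-assoc-of-R} in one line from the injectivity of the algebra homomorphism $\bfE_L$ of Theorem~\ref{thm-Rcal-inverse-limit} (with $L=\Fp$) into the commutative associative algebra $\varprojlim_{r}\Zcal^{(r)}$; you spell out the routine transport of associativity and commutativity along that injective multiplicative map, including the extension of multiplicativity from words to all of $\Rcal$ by bilinearity.
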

\begin{rem}
    The associativity of $(\Rcal,\ast)$ was announced by Im--Kim--Le--Ngo Dac--Pham in \cite{ikldp2023hopf}.
    In their work, the coefficients $\Delta_{a,b}^{i,j}$ arising in the $q$-shuffle product of multiple zeta values are investigated.
    However, our approach to associativity comes from the theory of multiple Eisenstein series and is completely different from theirs.
\end{rem}

To state our next result concerning the algebra structure of $\mathcal{E}$ (see Definition~\ref{df-Ecal}), we first establish some necessary notation.

\begin{df} \label{df-indices}
    \begin{enumerate}
        \item For any non-empty index $\ww{a}=(a_1,\ldots,a_m)\in\Ical$ and $0\leq i\leq m-1$, we define
        \[
        \ww{a}^{(i)} := (a_{i+1},\ldots,a_m)
        \quad
        \text{and}
        \quad
        \ww{a}^{(m)} := \emptyset.
        \]
        On the other hand, for each $1\le i\le m$, we define
        \[
        \ww{a}_{(i)} := (a_1,\ldots,a_i)
        \quad
        \text{and}
        \quad
        \ww{a}_{(0)} := \emptyset.
        \]
        By convention, we put
        \[
        \emptyset^{(i)} = \emptyset_{(i)} := \emptyset
        \]
        for all $i\geq 0$.

        \item For $r\ge 2$ and $\bfz = (z_1,\ldots,z_{r-1},z_r=1)\in \Omega^r$, let $\bfz' : = (z_2,\ldots,z_r)\in \Omega^{r-1}$.
    \end{enumerate}
\end{df}

\begin{rem}\label{rem-first-entry-convention}
    For any non-empty indices $\ww{a}, \ww{b}, \ww{c} \in \mathcal{I}$, we denote their first entries by $a_1, b_1, c_1$, respectively, provided there is no danger of confusion.
\end{rem}

\begin{df}\label{df-Ecal}
    Let $\Tcal$ be the free monoid generated by the set $\{x_k, y_\ell :  k, \ell\in \NN\}$, subject to the commutativity relations $x_k y_\ell = y_\ell x_k$ for all $k,\ell\in \NN$,
    and let $\Ecal$ be the $\Fp$-vector space generated by $\Tcal$.
    For a non-empty index $\ww{a} = (a_1,\ldots, a_m)\in\Ical$, we define $x_{\ww{a}} := x_{a_1}\cdots x_{a_m}$ and $y_{\ww{a}} := y_{a_1}\cdots y_{a_m}$. For $\ww{a}=\emptyset$, we define $x_{\emptyset} = y_{\emptyset} := 1$.
    An element $x_{\ww{a}}$ or $y_{\ww{a}}$ is called a \textit{word of depth $m$}.
    We define the \textit{$q$-shuffle product} $\ast$ on $\Ecal$ inductively on the sum of depths as follows:
    \begin{enumerate}
        \item For the empty word $x_\emptyset = y_\emptyset = 1$ and any $\afk \in \Ecal$, define 
        \[
        1 \ast \afk=\afk \ast 1 = \afk.
        \] 
        
        \item For any indices $\ww{a},\ww{b}\in\Ical$, define 
        \[
        x_{\ww{a}} \ast y_{\ww{b}}= x_{\ww{a}} y_{\ww{b}}= y_{\ww{b}} x_{\ww{a}} = y_{\ww{b}} \ast x_{\ww{a}}.
        \]
        
        \item For non-empty indices $\ww{a} = (a_1,\ldots, a_m) \in \Ical$ and $\ww{b} = (b_1,\ldots, b_n)\in \Ical$, define
        \begin{multline*}
            x_{\ww{a}} \ast x_{\ww{b}}
            = x_{a_1} ( x_{\ww{a}^{(1)}} \ast x_{\ww{b}} ) 
            + x_{b_1} ( x_{\ww{a}} \ast x_{\ww{b}^{(1)}} ) 
            + x_{a_1 + b_1} ( x_{\ww{a}^{(1)}} \ast x_{\ww{b}^{(1)}} ) \\
            + \sum_{\substack{i+j = a_1 + b_1 \\ q-1 \mid  j}} 
            \Delta^{i,j}_{a_1, b_1} x_i ( ( x_{\ww{a}^{(1)}} \ast x_{\ww{b}^{(1)}} ) * x_j ).
        \end{multline*}
        
        \item For non-empty indices $\ww{a} = (a_1,\ldots, a_m) \in \Ical$ and $\ww{b} = (b_1,\ldots, b_n)\in \Ical$, define  
        \begin{multline*}
             y_{\ww{a}} \ast y_{\ww{b}}
            = y_{a_1} ( y_{\ww{a}^{(1)}} \ast y_{\ww{b}} ) 
            + y_{b_1} ( y_{\ww{a}} \ast y_{\ww{b}^{(1)}} ) 
            + y_{a_1+b_1} ( y_{\ww{a}^{(1)}} \ast y_{\ww{b}^{(1)}} ) \\
            + \sum_{\substack{i+j = a_1 + b_1 \\ q-1 \mid  j}} 
            \Delta^{i,j}_{a_1,b_1} y_i ( ( y_{\ww{a}^{(1)}} \ast y_{\ww{b}^{(1)}} ) \ast x_j )
            + \sum_{\substack{i+j = a_1 + b_1 \\ q-1 \mid  j}} 
            \Delta^{i,j}_{a_1,b_1} y_i ( ( y_{\ww{a}^{(1)}} \ast y_{\ww{b}^{(1)}} ) * y_j ).
        \end{multline*}
        
        \item For any indices $\ww{a},\ww{a}',\ww{b},\ww{b}'\in\Ical$, define
        \[
        (x_{\ww{a}} y_{\ww{b}}) \ast (x_{\ww{a}'}y_{\ww{b}'}) 
        = (x_{\ww{a}}*x_{\ww{a}'}) \ast (y_{\ww{b}}*y_{\ww{b}'}).
        \]
        
        \item Expand the product $\ast$ to the $\Fp$-vector space $\Ecal$ by the distributive law.
    \end{enumerate}
\end{df}
We remark that $(\Ecal,\ast)$ forms a commutative $\FF_p$-algebra. Moreover, we observe that $(\Rcal,\ast)$ forms an $\FF_p$-subalgebra of $(\Ecal,\ast)$, and we denote the inclusion map by $\iota:\Rcal\hookrightarrow \Ecal$.
On the other hand, we have another natural map $\bfe:\Rcal\to \Ecal$ arising from the Goss expansion of multiple Eisenstein series (see Proposition \ref{prop-expansion-MES}). This map is defined as follows:

\begin{df} \label{defn-e-hat}
    We define $\bfe:\mathcal{R}\to \mathcal{E}$ to be the unique $\mathbb{F}_p$-linear map such that 
    \[
    \bfe(x_{\ww{a}})=\sum_{i=0}^{\dep(\ww{a})}x_{\ww{a}^{(i)}}y_{\ww{a}_{(i)}},\quad \bfe(x_\varnothing)=1, 
    \]
    for any non-empty index $\ww{a}\in\Ical$.
\end{df}

We then arrive at the following theorem concerning the algebra structure on $\Ecal$.

\begin{thm}[restated as Theorems~\ref{thm-assoc-of-E} and \ref{thm-unique-Hopf-alg-structure-extend-R}] \label{thm-main-thm2}
    We have the following:
    \begin{enumerate}
        \item Let $\phi:\mathcal{R}\otimes_{\mathbb{F}_p} \Rcal\to \Ecal$ be the unique $\bbF_p$-linear map such that
        \[
        \phi(x_{\ww{a}}\otimes x_{\ww{b}})=x_{\ww{a}} * \bfe(x_{\ww{b}})
        \]
        Then $\phi$ is an $\mathbb{F}_p$-algebra isomorphism.
        In particular, $(\mathcal{E},\ast)$ is a commutative associative $\mathbb{F}_p$-algebra defined in Definition~\ref{df-Ecal}.
        \item Let $(\Delta,\varepsilon,S)$ be any Hopf $\mathbb{F}_p$-algebra structure on $\mathcal{R}$. 
        There is a unique Hopf $\mathbb{F}_p$-algebra structure $(\Delta^{\mathrm{e}},\varepsilon^{\mathrm{e}},S^{\mathrm{e}})$ on $\mathcal{E}$ such that the inclusion map $\iota:\mathcal{R}\hookrightarrow \mathcal{E}$ and $\bfe:\mathcal{R}\to \mathcal{E}$ are Hopf $\mathbb{F}_p$-algebra homomorphisms.
        Moreover, endow the Hopf $\mathbb{F}_p$-algebra structure on $\mathcal{R}\otimes_{\mathbb{F}_p} \mathcal{R}$ via the structure induced by the product $\Spec \mathcal{R}\times_{\mathbb{F}_p} \Spec \mathcal{R}$ of $\mathbb{F}_p$-group schemes.
        Then the map $\phi$ is an isomorphism of Hopf $\mathbb{F}_p$-algebras.
        In other words, we have
        \[
        \Spec \Rcal \times_{\FF_p} \Spec \Rcal \simeq \Spec \Ecal
        \]
        as $\FF_p$-group schemes.
    \end{enumerate}
\end{thm}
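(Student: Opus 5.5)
The plan is to realize $\Ecal$ analytically, in the same way that Theorem~\ref{thm-main-thm1} realizes $\Rcal$. By \cite{CCHT2025} (Proposition~\ref{prop-expansion-MES}), there is a realization map $\widehat{\mathcal{E}}_r:\Ecal\to\Ocal(\Omega^r)$ for each $r\ge 2$. It sends $x_{\ww{a}}$ to the rank $r-1$ series $E_{r-1}(\ww{a};\bfz')$, and it sends $y_{\ww{b}}$ to the ``non-constant part'' series, namely the sum over the terms whose first coordinate $f_1$ is nonzero. This map respects $\ast$, and it satisfies $\widehat{\mathcal{E}}_r\circ\bfe=\bfE_r$ (the Goss expansion). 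I would prove (i) in the following three steps.

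First, $\phi$ is a linear bijection, shown by a triangularity argument. The element $\phi(x_{\ww{a}}\otimes x_{\ww{b}})=x_{\ww{a}}\ast\bfe(x_{\ww{b}})$ equals $x_{\ww{a}}\ast\sum_i x_{\ww{b}^{(i)}}y_{\ww{b}_{(i)}}$. Its top term in $y$-depth is $x_{\ww{a}}y_{\ww{b}}$, and every other term has $y$-depth strictly smaller than $\dep(\ww{b})$, with the total weight preserved. The products $x_{\ww{a}}\ast x_{\ww{b}^{(i)}}$ only involve $x$-words by Definition~\ref{df-Ecal}(iii), so they do not affect the $y$-part. Filtering $\Ecal$ by weight and then by $y$-depth, $\phi$ becomes unitriangular with respect to the basis $\{x_{\ww{a}}y_{\ww{b}}\}$ of $\Ecal$. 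Since each weight piece is finite-dimensional, $\phi$ is bijective.

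Second, $\phi$ is multiplicative. The tensor product $\Rcal\otimes\Rcal$ carries the product $(x\otimes y)(x'\otimes y')=(x\ast x')\otimes(y\ast y')$. We need
\[
(x_{\ww{a}}\ast\bfe(x_{\ww{b}}))\ast(x_{\ww{a}'}\ast\bfe(x_{\ww{b}'}))=(x_{\ww{a}}\ast x_{\ww{a}'})\ast\bfe(x_{\ww{b}}\ast x_{\ww{b}'}).
\]
Since associativity of $\Ecal$ is not yet known, this cannot be done formally. Instead I would verify it through realizations, in two parts.

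(a) Show that $\bfe$ is an algebra homomorphism. Apply $\widehat{\mathcal{E}}_r$ to both sides. The two sides become $E_r(\ww{b})E_r(\ww{b}')$ and $\bfE_r(x_{\ww{b}}\ast x_{\ww{b}'})$, which agree by Theorem~\ref{thm-E-hat}.

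(b) Show that the family $(\widehat{\mathcal{E}}_r)_r$ is jointly injective on $\Ecal$. I expect this to be the main obstacle. My approach is to use the $t_{\Lambda_{\bfz'}}$-expansion. An element of $\Ecal$ can be written as $\sum_{\ww{b}} c_{\ww{b}}\,y_{\ww{b}}$, where each coefficient $c_{\ww{b}}$ is a combination of $x$-words. Its image has $t$-expansion coefficients that are combinations of products $E_{r-1}(\cdot;\bfz')$ times explicit Goss-polynomial terms indexed by $\ww{b}$. The linear independence of the $y$-parts over $\Zcal^{(r-1)}$ should follow from the linear independence results of \S\ref{sec-t-expansion-of-MES}, for instance by comparing the leading $t$-powers. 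The injectivity of $x$-words then follows from Theorem~\ref{thm-main-thm1}(ii) after letting $r\to\infty$.

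Given (a) and (b), the realization of each side is a product of commuting functions. This gives the identity above, and also the associativity of $\ast$ on $\Ecal$. Indeed, every realized triple product is associative in $\Ocal(\Omega^r)$, and joint injectivity transfers this back to $\Ecal$. Consequently $\phi$ is an algebra isomorphism, and the associativity of $\Ecal$ follows from that of $\Rcal\otimes\Rcal$ (Corollary~\ref{cor-assoc-of-R}).

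For (ii), I would transport the product Hopf structure on $\Rcal\otimes\Rcal$ along $\phi$; this settles existence. Under $\phi$, the map $\iota$ corresponds to $x\mapsto x\otimes 1$ and $\bfe$ corresponds to $y\mapsto 1\otimes y$. Both are Hopf maps for the tensor-product Hopf structure, so $\iota$ and $\bfe$ are Hopf homomorphisms. For uniqueness, note that $\Ecal$ is generated as an algebra by $\iota(\Rcal)\cup\bfe(\Rcal)$, since $\phi$ is surjective. Any admissible comultiplication, counit and antipode are algebra (anti)homomorphisms, so they are determined on these generators and hence on all of $\Ecal$. Finally, the statement $\Spec\Ecal\simeq\Spec\Rcal\times\Spec\Rcal$ is just the dual of this description.
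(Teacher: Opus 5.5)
\textbf{Gap: step (b).} Your step~1 (unitriangularity of $\phi$ in $y$-depth) is correct, and so is your treatment of (ii). The gap is step~(b), and all of (i) depends on it: both the multiplicativity of $\bfe$ and the associativity of $\Ecal$ are routed through joint injectivity of $(\widehat{\mathcal{E}}_r)_r$ on all of $\Ecal$.

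Concretely, (b) amounts to the following claim: for $r\gg 0$, the multiple Goss sums $G_r(\ww{b};\bfz)$ with $\ww{b}\in\Ical_{\le w}$ are linearly independent over $\Zcal^{(r-1)}$, viewed as functions of $\bfz'$. Nothing in \S2 gives this.

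\begin{itemize}
\item Theorem~\ref{thm-linear-indep-of-MES} and the proposition after it only inspect $t_{\Lamz}(z_1)$-exponents $<q^{r-1}$. They also only prove independence over constants, or over $\Ocal(\Omega^{r-1})$ for $q$ very special indices.
\item By Lemma~\ref{lem-1/t-adic-valuation-of-G(t(fz))}, $\ord_r G_r(\ww{b};\bfz)\ge 1+q^{r-1}$ as soon as $\dep(\ww{b})\ge 2$. So every element $\sum_{\dep(\ww{b})\ge 2} c_{\ww{b}}\,y_{\ww{b}}$ is invisible in that range, and the leading-$t$-power comparison gives no information about it.
\item Beyond that range the expansion is hard to control. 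The top coefficient of $\phi^{\Lamz}_f$ depends only on $\deg f$ for $f\in A_+$. Hence the lowest-order term of $G^{\Lamz}_b(t_{\Lamz}(fz_1))$ is the same for all $q^d$ monic $f$ of degree $d\ge 1$, and these terms cancel in the Goss sum. The true leading behaviour of depth-$\ge 2$ Goss sums therefore comes from subtler correction terms.
\end{itemize}

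So ``comparing leading $t$-powers'' would be a new and substantial analytic theorem, not a consequence of the paper. There is also a second issue: multiplicativity of $\widehat{\mathcal{E}}_r$ on all of $\Ecal$ is not Proposition~\ref{prop-expansion-MES}. That proposition only gives $\widehat{\mathcal{E}}_r\circ\bfe=\bfE_r$ as linear maps, so you are importing an extra input from \cite{CCHT2025}. Granting both inputs, your deduction would be correct.

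\textbf{How the paper avoids this.} The paper needs no analytic input beyond $\Rcal$. Theorem~\ref{thm-linear-indep-of-MES} is used only to embed $\Rcal$ into $\varprojlim_r\Zcal^{(r)}$, which gives associativity of $\Rcal$. The rest is algebraic:
\begin{enumerate}
\item Corollary~\ref{cor-R-ass} derives $(\afk\ast\bfk)\ast\cfk=\afk\ast(\bfk\ast\cfk)$ for $\afk\in\Rcal$ and $\bfk,\cfk\in\Ecal$, purely formally from rules (ii) and (v) of Definition~\ref{df-Ecal}.
\item This partial associativity upgrades the recursions of \cite{CCHT2025}, which held only modulo the associator ideal, to genuine identities in $\Ecal$ (Lemmas~\ref{lem-yx*yx} and~\ref{lem-ye(x)*ye(x)}).
\item Induction on total depth then shows that $\bfe$ is an algebra homomorphism (Proposition~\ref{prop-bfe-is-alg-hom}).
\item Multiplicativity of $\phi$ only ever requires moving an $\Rcal$-factor across a product, i.e.\ Corollary~\ref{cor-R-ass} together with commutativity.
\item Associativity of $\Ecal$ is then inherited from $\Rcal\otimes_{\FF_p}\Rcal$ through the linear bijection. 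That bijection is your step~1, equivalent to Lemma~\ref{lem-bfe-basis}.
\end{enumerate}
Replacing your (a)--(b) by this chain repairs the proof. Your existence and uniqueness argument for (ii), by transport of structure and generation by $\iota(\Rcal)\cup\bfe(\Rcal)$, is equivalent to the paper's universal-property argument.
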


\begin{rem}
    Shi \cite[Conjecture~3.2.11]{Shi2018} proposed a conjectural Hopf algebra structure on $(\Rcal,\ast)$, and a proof of Shi's conjecture was announced by Im--Kim--Le--Ngo Dac--Pham \cite{ikldp2023hopf}.
\end{rem}

We note that Theorem \ref{thm-main-thm2} (i) confirms the associativity of $\mathcal{E}$, resolving a conjecture previously proposed by the authors in \cite[Remark 3.4]{CCHT2025} based on \texttt{SageMath} computations.

In the classical setting, multiple Eisenstein series were first introduced by Gangl, Kaneko, and Zagier~\cite{gkz2006double} in the depth two case, and later generalized to arbitrary depth by Bachmann~\cite{bachmann2012multiple}.
Subsequently, Bachmann and Tasaka~\cite{bt2018double} showed
that multiple Eisenstein series satisfy various linear relations arising from shuffle regularization, known as the \textit{restricted double shuffle relations}.
This result was further generalized in the recent work of Bachmann and Kanno~\cite{bachmann2026relations}.

In contrast, in \S~\ref{subsec-linear-independence} we establish a class of linear independence results for multiple Eisenstein series in positive characteristic, which lead to the main theorems mentioned above.
Our key idea is to consider multiple Eisenstein series of varying ranks simultaneously, for instance through the $q$-shuffle algebra $\Ecal$ and the inverse limit $\varprojlim\limits_{r\in \NN} \Zcal^{(r)}$.
There seems to be no similar notion of such consideration in the existing classical literature.

\section{\texorpdfstring{$t$}{t}-expansion of multiple Eisenstein series and applications} \label{sec-t-expansion-of-MES}

In this section, we first recall the definitions of Goss polynomials, multiple Goss sums, and Goss expansions.
We then compute a lower bound for the vanishing order of the $t_{\Lambda_{\bfz'}}$-expansion of multiple Goss sums.
The main result of this section is to establish a $\CC_\infty$-linear independence result of multiple Eisenstein series.

\subsection{Multiple Goss sums and Goss expansions}

Fix a positive integer $r$.
Given any $\bfz = (z_1,\ldots,z_{r-1},z_r=1)\in \Omega^r$, we let $\Lambda_{\bfz}$ be the $A$-lattice spanned by the coordinates of $\bfz$.
That is,
\[
\Lambda_{\bfz} := Az_1+\cdots+Az_{r-1}+Az_r \subseteq \CC_\infty.
\]
We define
\[
\exp_{\Lambda_\bfz}(X) := X\prod_{\lambda\in \Lambda_\bfz \setminus \{0\}}\left(1-\frac{X}{\lambda}\right)
\]
as the associated exponential function.
Following \cite[Proposition 15.3]{bbp2024drinfeld}, we can write
\[
\exp_{\Lambda_{\bfz}}(X) = \sum_{n=0}^{\infty}\alpha_n(\bfz) X^{q^n},
\]
where each $\alpha_n(\mathbf{z})$ is a rigid analytic function on $\Omega^r$, i.e., $\alpha_n(\bfz)\in \Ocal(\Omega^r)$.
We further put 
\[
t_{\Lambda_{\bfz}}(X) := \frac{1}{\exp_{\Lambda_{\bfz}}(X)} = \sum_{\lambda\in \Lambda_\bfz} \frac{1}{X+\lambda}.
\]

Associated to each lattice in $\CC_\infty$, Goss constructed a family of polynomials known as \textit{Goss polynomials}.

\begin{df}[{\cite{Gos1980}, \cite{gekeler1988coefficients}}] \label{df-Goss-polynomial}
    The \textit{Goss polynomials} are the unique polynomials 
    \[
    G_n^{\Lambda_\bfz}(X) \in \Ocal(\Omega^{r})[X],\quad n \ge 1,
    \]
    defined recursively as follows.
    \begin{enumerate}
        \item $G_1^{\Lambda_\bfz}(X)=X$.
        \item For $n>1$, 
        \[
        G_n^{\Lambda_\bfz}(X)=X\sum_{i=0}^{\lceil \log_qn\rceil-1}\alpha_i(\bfz)G_{n-q^i}^{\Lambda_\bfz}(X),
        \]
        where $\lceil x\rceil := \inf \{n\in \ZZ : n\ge x\}$ for $x\in\RR$.
    \end{enumerate}
\end{df}

Furthermore, Goss polynomials satisfy the following properties (see \cite{Gos1980} and also \cite{gekeler1988coefficients}).

\begin{prop}\label{prop-basic-properties-for-Goss-polynomials}  
    For each positive integer $n\in\NN$, we have the following properties.
    \begin{enumerate}
        \item $G_n^{\Lambda_\bfz} (t_{\Lambda_{\bfz}}(X))=\sum_{\lambda\in \Lambda_\bfz}1/(X+\lambda)^n$.
        \item For $1\le n\le q$, $G_n^{\Lambda_{\bfz}}(X) = X^n$. 
        \item $G_n^{\Lambda_{\bfz}}(X)$ is a monic polynomial of degree $n$.
        \item $X$ divides $G_n^{\Lambda_{\bfz}}(X)$.
    \end{enumerate}
\end{prop}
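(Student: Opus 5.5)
Writing $\exp$ for $\exp_{\Lambda_{\bfz}}$, $t$ for $t_{\Lambda_{\bfz}}(X)$ and $\alpha_i$ for $\alpha_i(\bfz)$, the plan is to prove all four properties together by strong induction on $n$, using only the recursion in Definition~\ref{df-Goss-polynomial} and the product expansion of $\exp$.

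\textbf{Structural properties (ii)--(iv).} These follow directly from the recursion. Since $\exp(X)=X\prod_{\lambda\neq0}(1-X/\lambda)$, its linear coefficient is $\alpha_0=1$. For $1<n\le q$ we have $\lceil\log_q n\rceil=1$, so the recursion reads $G_n=X\alpha_0G_{n-1}=XG_{n-1}$, and induction from $G_1=X$ gives (ii). For general $n$, every term $X\alpha_iG_{n-q^i}$ has degree at most $1+(n-q^i)\le n$, with equality exactly when $i=0$. Hence the leading term is $X\cdot G_{n-1}$, which is monic of degree $n$ by induction, giving (iii). Property (iv) is immediate, since every term of the recursion carries the factor $X$.

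\textbf{Property (i), the main step.} Set
\[
S_n(X)=\sum_{\lambda\in\Lambda_{\bfz}}(X+\lambda)^{-n}.
\]
The case $n=1$ is the identity $t=\sum_\lambda (X+\lambda)^{-1}$, which is the logarithmic derivative of $\exp$. This uses that the derivative $\exp'$ equals $\alpha_0=1$, because all other exponents $q^n$ are divisible by $p$. To obtain a recursion for $S_n$, I would use the generating function
\[
\sum_{n\ge1}S_n(X)u^{n-1}=\sum_\lambda\frac{1}{X+\lambda-u}=t_{\Lambda_{\bfz}}(X-u)=\frac{1}{\exp(X)-\exp(u)},
\]
where the last equality uses the $\FF_q$-linearity and additivity of $\exp$. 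Next, write $\exp(u)=\sum_i\alpha_iu^{q^i}$ and expand
\[
\frac{1}{\exp(X)-\exp(u)}=\frac{t}{1-t\exp(u)}=\sum_{k\ge0}t^{k+1}\exp(u)^k,
\]
which is valid as formal power series in $u$. This expresses $S_n$ as a polynomial in $t$, say $S_n=P_n(t)$.

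It remains to check that $P_n$ satisfies the Goss recursion. Multiplying the identity $\bigl(1-t\exp(u)\bigr)\sum_n S_nu^{n-1}=t$ out and comparing coefficients of $u^{n-1}$ gives
\[
S_n=t\sum_i\alpha_iS_{n-q^i},
\]
with the conventions $S_0=0$ for $n>1$ and that only indices $q^i<n$ contribute. The condition $q^i\le n-1$ matches the range $i\le\lceil\log_qn\rceil-1$ in the recursion. By induction, $S_n=G_n(t)$.

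\textbf{Expected obstacle.} The main difficulty is justifying the formal manipulations. The lattice sums converge because $\Lambda_{\bfz}$ is discrete, and $t_{\Lambda_{\bfz}}(X-u)$ is analytic in $u$ near $0$ for $X\notin\Lambda_{\bfz}$. Under these conditions, the coefficient identification is a legitimate Taylor expansion in $u$, so comparing coefficients is justified.
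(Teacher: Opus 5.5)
Your proof is correct. The paper gives no argument for this proposition; it simply quotes it from Goss and Gekeler. Since the $G_n^{\Lambda_{\bfz}}$ are \emph{defined} here by the recursion, parts (ii)--(iv) are formal, and your use of $\alpha_0(\bfz)=1$ together with the degree count $1+n-q^i<n$ for $i\ge 1$ handles them exactly right. Part (i) is the only real content.

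For (i) you reproduce the standard generating-function argument behind the cited literature. Multiplying your identity by $u$ gives $\sum_{n\ge1}S_n(X)u^n = tu/(1-t\exp_{\Lambda_{\bfz}}(u))$, which is the classical generating function of the Goss polynomials evaluated at $t=t_{\Lambda_{\bfz}}(X)$. Comparing coefficients of $u^{n-1}$ in $(1-t\exp_{\Lambda_{\bfz}}(u))\sum_n S_nu^{n-1}=t$ then yields precisely the paper's recursion, because $q^i\le n-1$ is equivalent to $i\le\lceil\log_q n\rceil-1$. What your version adds is self-containedness.

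The analytic justification is cleanest on the disk $|u|<\operatorname{dist}(X,\Lambda_{\bfz})$. On that disk three things hold:
\begin{enumerate}
\item the family $u^{n-1}(X+\lambda)^{-n}$ tends to $0$, so it can be summed in either order;
\item $X-u\notin\Lambda_{\bfz}$;
\item the product of the two convergent power series equals the constant $t$, so the coefficients agree.
\end{enumerate}
You therefore do not need the geometric expansion $\sum_k t^{k+1}\exp_{\Lambda_{\bfz}}(u)^k$ at all. Also, the base identity $t_{\Lambda_{\bfz}}(X)=\sum_\lambda (X+\lambda)^{-1}$ is already part of the paper's definition of $t_{\Lambda_{\bfz}}$, so your logarithmic-derivative remark is optional.
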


Recall that when $r\geq 2$, we set $\bfz'=(z_2,\ldots,z_r=1)\in\Omega^{r-1}$ for $\mathbf{z}=(z_1,\ldots,z_r=1)\in \Omega^r$. 
As before, $\Ical$ denotes the set of all indices, including the empty index. 

\begin{df} [{\cite[Definition 2.4]{CCHT2025}}]  \label{df-Goss-sum}
    Let $r\geq 2$. For any non-empty index $\ww{a} = (a_1, \ldots, a_m) \in \Ical$, we define the \textit{rank $r$ multiple Goss sum}
    \[
    G_r (\ww{a} ; \bfz) 
    := \sum_{\substack{f_1, \ldots ,f_m \in A_+ \\ \deg f_1>\cdots>\deg f_m \ge 0}} 
    G^{\Lamz}_{a_1} (t_{\Lamz}(f_1z_1)) \cdots G^{\Lamz}_{a_m} (t_{\Lamz}(f_mz_1)).
    \]
    By convention, we put $G_r(\emptyset;\mathbf{z})=1$.
\end{df}

Recall the multiple Eisenstein series  $E_r(\ww{a};\mathbf{z})$ defined in Definition \ref{df-MES-positive-charcteristic}.
Multiple Goss sums arise naturally in the study of the expansion of $E_r(\ww{a};\mathbf{z})$.
More precisely, we have the following \textit{Goss expansion} of multiple Eisenstein series.

\begin{prop}[{\cite[Proposition 2.3]{CCHT2025}}] \label{prop-expansion-MES}
    Let $r\ge 2$.
    For any non-empty index $\ww{a} = (a_1,\ldots,a_m) \in \Ical$ and $\bfz \in \Omega^r$, we have 
    \[
    E_r(\ww{a} ; \bfz) = E_{r-1}(\ww{a}; \bfz') + \sum_{i=1}^{m} E_{r-1}(\ww{a}^{(i)}; \bfz') G_r(\ww{a}_{(i)}; \bfz).
    \]
\end{prop}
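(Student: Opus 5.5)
The plan is to split the defining sum of $E_r(\ww{a};\bfz)$ according to how many of the lattice vectors $\bff_1,\ldots,\bff_m$ have non-zero first coordinate. The first step is a structural observation about the order. If $\ang{\bff,\bfz}\succ 0$, then either $f_1\in A_+$, or $f_1=0$ and $\ang{\bff,\bfz}=\ang{\bff',\bfz'}\succ 0$ in rank $r-1$, where $\bff'=(f_2,\ldots,f_r)$. Moreover, every element with $f_1\in A_+$ is $\succ$ every element with $f_1=0$, by clause (ii)(b) with $i=1$. Hence in a chain $\ang{\bff_1,\bfz}\succ\cdots\succ\ang{\bff_m,\bfz}\succ 0$ there is a unique $i\in\{0,\ldots,m\}$ such that $\bff_1,\ldots,\bff_i$ have monic first coordinate and $\bff_{i+1},\ldots,\bff_m$ have first coordinate $0$. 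The tail chain is then exactly a rank $r-1$ chain for $\bfz'$, and it is unconstrained by the head, since every head element dominates every tail element automatically.

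Among the head elements, the comparisons are governed by clause (ii)(a) with $i=1$: $\ang{\bff_j,\bfz}\succ\ang{\bff_{j+1},\bfz}$ if and only if $\deg f_{j,1}>\deg f_{j+1,1}$. The remaining coordinates $(f_{j,2},\ldots,f_{j,r})$ are completely free, so each one ranges over all of $\Lambda_{\bfz'}$. I would record this as a bijection between the admissible chains and the tuples
\[
(i;\ f_{1,1},\ldots,f_{i,1}\in A_+ \text{ with strictly decreasing degrees};\ \lambda_1,\ldots,\lambda_i\in\Lambda_{\bfz'};\ \text{a rank } r-1 \text{ chain of length } m-i).
\]

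Under this bijection the summand factors as $\prod_{j\le i}(f_{j,1}z_1+\lambda_j)^{-a_j}$ times the rank $r-1$ summand for $\ww{a}^{(i)}$. I then sum each $\lambda_j$ over $\Lambda_{\bfz'}$ and apply Proposition~\ref{prop-basic-properties-for-Goss-polynomials}(i) for the lattice $\Lambda_{\bfz'}$:
\[
\sum_{\lambda\in\Lambda_{\bfz'}}\frac{1}{(f z_1+\lambda)^{a}}=G^{\Lamz}_{a}\bigl(t_{\Lamz}(f z_1)\bigr).
\]
Summing next over the monic $f_{j,1}$ with decreasing degrees yields $G_r(\ww{a}_{(i)};\bfz)$ by Definition~\ref{df-Goss-sum}, and the tail yields $E_{r-1}(\ww{a}^{(i)};\bfz')$. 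The term $i=0$ is $E_{r-1}(\ww{a};\bfz')$, and for $i=m$ the tail factor is $E_{r-1}(\emptyset;\bfz')=1$. This gives exactly the claimed identity.

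The main obstacle is analytic: the regrouping has to be justified, since it rearranges and iterates infinite sums in $\CC_\infty$. I would handle this using the non-archimedean setting. Since $z_1\notin K_\infty\Lambda_{\bfz'}$, there is a lower bound $|fz_1+\lambda|\ge c\max(|f|,|\lambda|)$. Combined with the finiteness of lattice points of bounded norm, this shows the terms of the full multi-sum tend to $0$ along any exhaustion by finite sets, and unconditional convergence in a complete non-archimedean field then permits arbitrary regrouping. The same bound confirms convergence of each $G_r(\ww{a}_{(i)};\bfz)$. Finally, I would note that $t_{\Lamz}(fz_1)$ is well defined because $fz_1\notin\Lambda_{\bfz'}$.
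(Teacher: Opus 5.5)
Your proof is correct and follows the same route as the standard proof of this Goss expansion, which the present paper does not reprove but quotes from \cite{CCHT2025}. That route is to split each chain at the last vector with monic first coordinate, observe that the other coordinates of the head vectors range freely over $\Lambda_{\bfz'}$ while the tail is an unconstrained rank $r-1$ chain, and collapse the $\Lambda_{\bfz'}$-sums using Proposition~\ref{prop-basic-properties-for-Goss-polynomials}(i); your non-archimedean summability argument correctly justifies the regrouping.
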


\subsection{\texorpdfstring{$t$}{t}-expansion of multiple Goss sums and multiple Eisenstein series} \label{subsec-t-expansion-of-MES-and-MGS}

For each $r\geq 1$, we let 
\[
\Gamma_r
:= \left\{
\left(\begin{array}{c|c}
    1 & * \\
    \hline
    0 & I_{r-1}
\end{array}\right)
\in \GL_r(A)
\right\}.
\]
Then $\Gamma_r$ acts on $\mathcal{O}(\Omega^r)$ via 
\[
(\gamma \cdot F)(\bfz) := F(\bfz\cdot\gamma^{\mathrm{tr}}) \quad \text{for} \quad\gamma\in\Gamma_r,
\]
where $\bfz\cdot \gamma^{\mathrm{tr}}$ is the matrix multiplication of the row vector $\mathbf{z}$ and the transpose of $\gamma$.
We denote by $\Ocal(\Omega^r)^{\Gamma_r}$ the subspace consisting of all rigid analytic functions on $\Omega^r$ invariant under $\Gamma_r$.

\begin{rem}\label{rem-gamma-invariant}
     The following examples will be frequently used in the subsequent discussions.
    \begin{enumerate}
        \item For each $F\in \Ocal(\Omega^{r-1})$, the function
        \[
        \bfz\mapsto F(\bfz')
        \]
        defines a $\Gamma_r$-invariant rigid analytic function on $\Omega^r$, which is still denoted by $F$ if there is no danger of confusion.
        This induces a embedding into $\mathcal{O}(\Omega^r)^{\Gamma_r} \subseteq \mathcal{O}(\Omega^r)$:
        
        \begin{alignat*}{2}
            \mathcal{O}(\Omega^{r-1}) &\;\hookrightarrow\; && \mathcal{O}(\Omega^r)^{\Gamma_r}\\
            F & \;\mapsto\; && F:=(\mathbf{z}\mapsto F(\mathbf{z}')). 
        \end{alignat*}
        Throughout this paper, we identify $\mathcal{O}(\Omega^{r-1})$ with its image under the map above.

        \item For $f\in A_+$, the function $H:\mathbf{z}\mapsto t_{\Lambda_{\mathbf{z}'}}(fz_1):= t_{\Lambda_{\mathbf{z}'}}(X)|_{X=fz_1}$ is $\Gamma_r$-invariant.
        To verify this, let
        \[
        \gamma=\begin{pmatrix}
            1 & a_2 & \cdots & a_r \\
            0 & 1 & \cdots & 0 \\
            \vdots & \vdots & \ddots & \vdots \\
            0 & 0 & \cdots & 1
        \end{pmatrix}\in \Gamma_r
        \]
        with $a_2,\ldots,a_r\in A$.
        Since $fa_2z_2+\cdots+fa_rz_r\in \Lambda_{\mathbf{z}'}$, we have
        \begin{align*}
            (\gamma \cdot H)(\bfz) 
            &= H(\mathbf{z} \cdot \gamma^{\operatorname{tr}}) \\
            &= H(z_1+a_2z_2+\cdots+a_rz_r,z_2,\ldots,z_r) \\
            &=t_{\Lambda_{\mathbf{z}'}}(fz_1+fa_2z_2+\cdots+fa_rz_r) \\
            &=\frac{1}{\exp_{\Lambda_{\mathbf{z}'}}(fz_1)+\exp_{\Lambda_{\mathbf{z}'}}(fa_2z_2+\cdots+fa_rz_r)} \\
            &=\frac{1}{\exp_{\Lambda_{\mathbf{z}'}}(fz_1)} =t_{\Lambda_{\mathbf{z}'}}(fz_1) = H(\mathbf{z}).
        \end{align*}
    \end{enumerate}
\end{rem}

We next recall the $t_{\Lambda_{\bfz'}}$-expansion of $\Gamma_r$-invariant rigid analytic functions in the arbitrary rank setting developed in~\cite{bbp2024drinfeld} and~\cite{Gek25-DMF-VII}.

\begin{prop}[{\cite[Proposition 5.4]{bbp2024drinfeld}, \cite[(7.12.1)]{Gek25-DMF-VII}}]\label{prop-t-expansion}
    Let $r\geq 2$.
    For each $F \in \Ocal(\Omega^r)^{\Gamma_r}$, there is a unique sequence $\{F_n\}_{n\in \mathbb{Z}}$ in $\mathcal{O}(\Omega^{r-1})$ such that $F$ admits the $t_{\Lambda_{\bfz'}}$-expansion
    \[
    F(\mathbf{z})=\sum_{n=-\infty}^{\infty}F_n(\mathbf{z}')t_{\Lambda_{\mathbf{z}'}}(z_1)^n,
    \]
    valid for $\mathbf{z} = (z_1, \ldots, z_{r-1}, 1)\in \Omega^r$ in a neighborhood of infinity.
\end{prop}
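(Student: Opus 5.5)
The plan is to reduce to the one-variable theory of Laurent expansions on punctured discs, applied fibrewise over $\Omega^{r-1}$, and then to check that the coefficients depend rigid-analytically on $\bfz'$.

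\emph{Step 1: $F$ is periodic in $z_1$.} Fix $\bfz'\in\Omega^{r-1}$. The group $\Gamma_r$ acts by $z_1\mapsto z_1+a_2z_2+\cdots+a_rz_r$ with $a_i\in A$, leaving $\bfz'$ unchanged. So $\Gamma_r$-invariance says exactly that the one-variable function
\[
z_1\mapsto F(z_1,\bfz')
\]
is $\Lambda_{\bfz'}$-periodic. Its domain is $\{z_1\in\CC_\infty : z_1\notin K_\infty z_2+\cdots+K_\infty z_r\}$.

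\emph{Step 2: pass to the coordinate $u=t_{\Lambda_{\bfz'}}(z_1)$.} The exponential $\exp_{\Lambda_{\bfz'}}$ induces a rigid analytic isomorphism $\CC_\infty/\Lambda_{\bfz'}\xrightarrow{\sim}\CC_\infty$. I will define a neighbourhood of infinity as a set of the form
\[
\{\bfz : |z_1|_{\mathrm{i}}\geq c\}, \qquad |z_1|_{\mathrm{i}}:=\inf_{x\in K_\infty z_2+\cdots+K_\infty z_r}|z_1-x|.
\]
Using the product formula for $\exp_{\Lambda_{\bfz'}}$, the aim is to show that for $c$ large (locally uniformly in $\bfz'$) this set maps under $u=1/\exp_{\Lambda_{\bfz'}}(z_1)$ onto a punctured disc $0<|u|\le\varepsilon(\bfz')$ modulo $\Lambda_{\bfz'}$. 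This is the standard estimate $|\exp_{\Lambda}(z)|\asymp$ a function of $|z|_{\mathrm{i}}$ growing to infinity. It follows that $F(\cdot,\bfz')$ descends to a rigid analytic function on that punctured disc. Such a function has a unique Laurent expansion $\sum_{n\in\ZZ}F_n(\bfz')u^n$ converging on the disc, and this already gives uniqueness of the sequence $\{F_n\}$ pointwise in $\bfz'$.

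\emph{Step 3: analyticity of the coefficients (the expected main obstacle).} It remains to show that each $F_n$ is rigid analytic on $\Omega^{r-1}$. I would cover $\Omega^{r-1}$ by its standard admissible affinoid covering, the one given by the building or spectral-norm conditions. On each affinoid $U$ the bounds from Step 2 hold uniformly. Then I would consider the relative space
\[
\{(u,\bfz') : \bfz'\in U,\ \varepsilon_1\le|u|\le\varepsilon_2\},
\]
which is a relative annulus. The map $(z_1,\bfz')\mapsto(t_{\Lambda_{\bfz'}}(z_1),\bfz')$ identifies a suitable quotient of the corresponding region of $\Omega^r$ with this annulus. For this I would use that the coefficients $\alpha_n(\bfz')$ are analytic, so $\exp_{\Lambda_{\bfz'}}$ is analytic in both variables, together with local invertibility, which holds because the derivative of $\exp$ is $1$.

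A rigid analytic function on a relative annulus over an affinoid $U$ is a Laurent series $\sum_n F_n u^n$ with $F_n\in\Ocal(U)$. Uniqueness from Step 2 makes these $F_n$ glue across the covering to elements of $\Ocal(\Omega^{r-1})$. Letting $\varepsilon_1\to0$ yields convergence on the whole punctured neighbourhood of infinity.

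The delicate points are the uniformity of the constants $c$ and $\varepsilon$ over the affinoids, and the descent of $F$ to the relative annulus. For these I would rely on the estimates in \cite{bbp2024drinfeld} and \cite{Gek25-DMF-VII}.
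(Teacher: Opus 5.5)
The paper gives no argument of its own here; it imports the statement from \cite[Proposition 5.4]{bbp2024drinfeld} and \cite[(7.12.1)]{Gek25-DMF-VII}, and your outline is essentially the standard argument of those sources: periodicity under $\Gamma_r$, identifying the quotient of a neighbourhood of infinity with a punctured disc bundle over $\Omega^{r-1}$ via $u=t_{\Lambda_{\bfz'}}(z_1)$, relative Laurent expansions over an affinoid covering of $\Omega^{r-1}$, and gluing by fibrewise uniqueness. The one caution is that the derivative of $\exp_{\Lambda_{\bfz'}}$ being $1$ only gives local inverses on small polydiscs, and such a covering of an affinoid is not admissible, so rigid analyticity of the descended function on the relative annulus genuinely needs the finiteness/isomorphism statement for the quotient map proved in those references, which you rightly defer to them.
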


\begin{df}[{\cite[Definition 5.12]{bbp2024drinfeld}}]\label{df.1/t-adic-valution}
    Let $r\ge 2$.
    For each $F\in \mathcal{O}(\Omega^r)^{\Gamma_r}$ with the $t_{\Lambda_{\mathbf{z}'}}$-expansion given in Proposition \ref{prop-t-expansion}, we define
    \[
    \ord_r (F) := \inf \{n \in \ZZ : F_n(\bfz') \neq 0 \}\in \mathbb{Z}\cup \{\pm \infty\}.
    \]
\end{df}

We note that for any $F_1,F_2\in \mathcal{O}(\Omega^r)^{\Gamma_r}$, we have
\begin{equation}\label{eq-ord_r-valuation-multiplicative}
    \ord_r(F_1F_2)=\ord_r(F_1)+\ord_r(F_2)
\end{equation}
and
\begin{equation}\label{eq-ord_r-valuation-additive}
    \ord_r(F_1+F_2)\ge \min \{\ord_r(F_1),\ord_r(F_2)\}.
\end{equation}

To obtain a lower bound for $\ord_r(G_r(\ww{a};\mathbf{z}))$, we require the theory of Drinfeld $A$-modules, originally introduced in \cite{Drinfeld1974}. 
We briefly review the necessary background as follows.

\begin{df}[{\cite[Definition 4.4.2]{goss1996basic}}]
    Let $r\geq 1$. A \textit{rank $r$ Drinfeld $A$-module} $\phi$ over $\CC_\infty$ is an $\FF_q$-algebra homomorphism
    \begin{alignat*}{2}
            \phi:A &\;\rightarrow\; && \End_{\FF_q}(\GG_a(\CC_\infty))\\
            f & \;\mapsto\; && \phi_f:=g_{f,0}\id +g_{f,1}\tau^1+\cdots+g_{f,r\deg f}\tau^{r\deg f} 
        \end{alignat*}
    where for each $f\in A$, we require:
    \begin{enumerate}
        \item $g_{f,i}\in\CC_\infty$ for $0\leq i\leq r\deg f$. 
        \item$g_{f,0}=f$.
        \item $g_{f,r\deg f}\neq 0$.
    \end{enumerate}
    Here, $\tau:x\mapsto x^q$ denotes the $q$-th power Frobenius endomorphism on $\CC_\infty$-valued points of the additive group $\GG_a$.
    Furthermore, we let \[\phi_f(X):=g_{f,0}X +g_{f,1}X^q+\cdots+g_{f,r\deg f}X^{q^{r\deg f}}\in\CC_\infty[X]\]  be the associated $\FF_q$-linear polynomials.
\end{df}
Now, let $\mathbf{z} = (z_1,\ldots,z_{r-1},z_r=1)\in \Omega^r$ and consider the associated lattice
\[
\Lambda_{\mathbf{z}}
=Az_1+Az_2+\cdots+Az_{r-1}+Az_r.
\]
It follows from Drinfeld's uniformization theorem (see \cite[\S~4]{goss1996basic}) that there exists a unique rank $r$ Drinfeld $A$-module $\phi^{\Lambda_{\bfz}}$ over $\mathbb{C}_{\infty}$ such that for all $f\in A$,
\[
\exp_{\Lambda_{\bfz}}(fX)=\phi^{\Lambda_{\bfz}}_f(\exp_{\Lambda_{\bfz}}(X)).
\]
For each $f\in A$, we write 
\[
\phi^{\Lambda_{\bfz}}_f(X)
:= \sum_{i=0}^{r\deg f} g_{f,i} (\bfz) X^{q^i},
\]
and view the coefficient forms $g_{f,i}(\bfz)$ as functions on $\Omega^r$. Then it is known by \cite[Proposition 15.12]{bbp2024drinfeld} that for each $0\leq i\leq r\deg f$, we have 
$g_{f,i}(\bfz) \in \Ocal(\Omega^{r})$ and $g_{f,r\deg f} (\bfz)\in \Ocal(\Omega^{r})^\times$. 
Here, we denote the group of units of $\Ocal(\Omega^r)$ by $\mathcal{O}(\Omega^r)^{\times}$.

In the following lemma, we provide a lower bound for $\ord_r(G_r(\ww{a};\mathbf{z}))$.

\begin{lem}\label{lem-1/t-adic-valuation-of-G(t(fz))}
    Let $r\ge 2$.
    For any index $\ww{a}\in \Ical$, we have that $G_r(\ww{a};\mathbf{z})\in \mathcal{O}(\Omega^r)^{\Gamma_r}$ and 
    \[
    \ord_r(G_r(\ww{a};\mathbf{z}))\ge \frac{q^{(r-1)\dep (\ww{a})}-1}{q^{r-1}-1}.
    \]
\end{lem}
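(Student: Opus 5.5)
The plan is to compute the $t_{\Lambda_{\bfz'}}$-expansion of each building block $t_{\Lambda_{\bfz'}}(fz_1)$ explicitly with the Drinfeld module $\phi^{\Lambda_{\bfz'}}$ of rank $r-1$. We then pass the resulting bound through the Goss polynomials, the finite products, and the infinite sum defining $G_r(\ww{a};\bfz)$. For $\ww{a}=\emptyset$ everything is trivial, since $G_r(\emptyset;\bfz)=1$ has order $0$. So assume $m=\dep(\ww{a})\ge 1$.

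\emph{Step 1 (invariance).} By Remark~\ref{rem-gamma-invariant}~(ii), each $t_{\Lambda_{\bfz'}}(fz_1)$ is $\Gamma_r$-invariant. The Goss polynomials have coefficients in $\Ocal(\Omega^{r-1})$, which is $\Gamma_r$-invariant by Remark~\ref{rem-gamma-invariant}~(i). Hence every summand of $G_r(\ww{a};\bfz)$ is invariant. Since the series converges locally uniformly (it is rigid analytic by \cite{CCHT2025}), the sum $G_r(\ww{a};\bfz)$ lies in $\Ocal(\Omega^r)^{\Gamma_r}$.

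\emph{Step 2 (the key computation).} Write $t:=t_{\Lambda_{\bfz'}}(z_1)$ and $d=\deg f$. From $\exp_{\Lambda_{\bfz'}}(fz_1)=\phi^{\Lambda_{\bfz'}}_f(\exp_{\Lambda_{\bfz'}}(z_1))$ we get
\[
t_{\Lambda_{\bfz'}}(fz_1)=\frac{1}{\sum_{i=0}^{(r-1)d} g_{f,i}(\bfz')\,t^{-q^i}}
=\frac{t^{q^{(r-1)d}}}{g_{f,(r-1)d}(\bfz')}\Bigl(1+\sum_{i<(r-1)d}\tfrac{g_{f,i}(\bfz')}{g_{f,(r-1)d}(\bfz')}\,t^{q^{(r-1)d}-q^i}\Bigr)^{-1}.
\]
The leading coefficient $g_{f,(r-1)d}$ is a unit in $\Ocal(\Omega^{r-1})$. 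Expanding the geometric series therefore gives a $t$-expansion with coefficients in $\Ocal(\Omega^{r-1})$, and shows that $\ord_r t_{\Lambda_{\bfz'}}(fz_1)=q^{(r-1)\deg f}$. By Proposition~\ref{prop-basic-properties-for-Goss-polynomials}~(iv), $G^{\Lambda_{\bfz'}}_{a}(X)$ is divisible by $X$ with coefficients in $\Ocal(\Omega^{r-1})$. Combining this with \eqref{eq-ord_r-valuation-multiplicative} and \eqref{eq-ord_r-valuation-additive} gives
\[
\ord_r G^{\Lambda_{\bfz'}}_{a}\bigl(t_{\Lambda_{\bfz'}}(fz_1)\bigr)\ge q^{(r-1)\deg f}.
\]

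\emph{Step 3 (products and the sum).} Consider a summand indexed by $f_1,\dots,f_m\in A_+$ with $\deg f_1>\cdots>\deg f_m\ge 0$. By multiplicativity its order is at least $\sum_k q^{(r-1)\deg f_k}$. Because the degrees are distinct nonnegative integers, this sum is at least $\sum_{k=0}^{m-1}q^{(r-1)k}=\frac{q^{(r-1)m}-1}{q^{r-1}-1}$. Every summand thus has order at least this bound, and only finitely many summands have order below any given $N$, since the order grows with $\deg f_1$.

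\emph{Main obstacle.} It remains to show that $\ord_r$ of the infinite sum is still bounded below by the common bound. This needs the $t$-expansion of $G_r(\ww{a};\bfz)$ to be the coefficientwise sum of the expansions of the summands. I would argue this as follows. For each $n$, only finitely many summands contribute to the coefficient of $t^n$, so the coefficientwise sum is well defined. On a neighborhood of infinity where $|t|$ is small, the series converges uniformly, using the standard estimate $|t_{\Lambda_{\bfz'}}(fz_1)|\le C|t|^{q^{(r-1)\deg f}}$ derived from the formula in Step 2. Uniqueness in Proposition~\ref{prop-t-expansion} then identifies the coefficientwise sum with the expansion of $G_r(\ww{a};\bfz)$. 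Consequently all coefficients $F_n$ with $n<\frac{q^{(r-1)m}-1}{q^{r-1}-1}$ vanish, which proves the lemma.
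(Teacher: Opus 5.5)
Your proposal is correct and follows essentially the same route as the paper: write $t_{\Lambda_{\bfz'}}(fz_1)=1/\phi^{\Lambda_{\bfz'}}_f\bigl(t_{\Lambda_{\bfz'}}(z_1)^{-1}\bigr)$ to get $\ord_r\bigl(t_{\Lambda_{\bfz'}}(fz_1)\bigr)=q^{(r-1)\deg f}$. You then use $X\mid G_a^{\Lambda_{\bfz'}}(X)$, multiplicativity of $\ord_r$, and the distinctness of $\deg f_1>\cdots>\deg f_m$ to bound each summand, exactly as the paper does.

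The one difference is that you explicitly justify passing the bound through the infinite sum (finitely many contributions to each coefficient, plus uniqueness of the expansion), whereas the paper simply invokes \eqref{eq-ord_r-valuation-additive}, which is stated for finite sums, for the whole series. Your extra care is welcome, though the uniform-in-$f$ estimate you cite needs standard input beyond the Step~2 formula itself.
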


\begin{proof}  
    For the empty index, the result is clear.
    We may assume $\ww{a}=(a_1,\ldots,a_m)\in \Ical$ is non-empty.
    By Remark \ref{rem-gamma-invariant}, the function $\bfz\mapsto t_{\Lambda_{\mathbf{z}'}}(fz_1)$ is $\Gamma_r$-invariant on $\Omega^r$ for each $f \in A_+$.
    Thus, one sees from Definitions~\ref{df-Goss-polynomial} and \ref{df-Goss-sum} that
    \[
    G_r(\ww{a};\mathbf{z})\in \mathcal{O}(\Omega^r)^{\Gamma_r}.
    \]

    To obtain the desired lower bound for $\ord_r (G_r(\ww{a};\bfz))$, by Definition \ref{df-Goss-sum} and \eqref{eq-ord_r-valuation-additive}, it suffices to show that for all $f_1,\ldots,f_m\in A_+$ with $\deg f_1>\cdots>\deg f_m \geq 0$, we have
    \begin{align*}
        \ord_r\left(G_{a_1}^{\Lambda_{\mathbf{z}'}}(t_{\Lambda_{\mathbf{z}'}}(f_1z_1))
        \cdots 
        G_{a_m}^{\Lambda_{\mathbf{z}'}}(t_{\Lambda_{\mathbf{z}'}}(f_mz_1))\right) 
        &\ge \frac{q^{(r-1)m}-1}{q^{r-1}-1}.
    \end{align*}
    For each $\mathbf{z} = (z_1,\ldots,z_{r-1},z_r=1)\in \Omega^r$, we denote by $\phi^{\Lambda_{\bfz'}}$ the Drinfeld $A$-module corresponding to the lattice $\Lambda_{\bfz'}$ and write
    \[
    \phi^{\Lambda_{\bfz'}}_f(X)
    = \sum_{i=0}^{(r-1)\deg f} g_{f,i} (\bfz') X^{q^i}.
    \]
    We note that by the discussion above, $g_{f,i}(\bfz') \in \Ocal(\Omega^{r-1})$ for $0\le i\le (r-1)\deg f$ and $g_{f,(r-1)\deg f} (\bfz')\in \Ocal(\Omega^{r-1})^\times$.
    It follows that for $f\in A_+$
    \begin{align*}
        t_{\Lambda_{\mathbf{z}'}}(fz_1)
        &= \frac{1}{\exp_{\Lambda_{\mathbf{z}'}}(fz_1)}
        = \frac{1}{\phi^{\Lambda_{\mathbf{z}'}}_f(\exp_{\Lambda_{\mathbf{z}'}}(z_1))}
        = \frac{1}{\phi^{\Lambda_{\mathbf{z}'}}_f(t_{\Lambda_{\mathbf{z}'}}(z_1)^{-1})} \\
        &=\left( g_{f,(r-1)\deg f}(\bfz')\cdot t_{\Lambda_{\mathbf{z}'}}(z_1)^{-q^{(r-1)\deg f}} + \sum_{i=0}^{(r-1)\deg f-1} g_{f,i}(\bfz') \cdot t_{\Lambda_{\mathbf{z}'}}(z_1)^{-q^i} \right)^{-1} \\
        &=\frac{t_{\Lambda_{\mathbf{z}'}}(z_1)^{q^{(r-1)\deg f}}}{g_{f,(r-1)\deg f}(\bfz')}\cdot \left(1+\sum_{i=0}^{(r-1)\deg f-1} \frac{g_{f,i}(\bfz')}{g_{f,(r-1)\deg f}(\bfz')} \cdot t_{\Lambda_{\mathbf{z}'}}(z_1)^{q^{(r-1)\deg f}-q^i}\right)^{-1}.
    \end{align*}
    Since
    \[
    1+\sum_{i=0}^{(r-1)\deg f-1} \frac{g_{f,i}(\bfz')}{g_{f,(r-1)\deg f}(\bfz')} \cdot X^{q^{(r-1)\deg f}-q^i}
    \in\Ocal(\Omega^{r-1})[\![X]\!]^\times,
    \]
    we see that
    \[
    \ord_r(t_{\Lambda_{\mathbf{z}'}}(fz_1))=q^{(r-1)\deg f}.
    \]
    By Proposition \ref{prop-basic-properties-for-Goss-polynomials}, for each $a \in \NN$, $X$ divides $G_a^{\Lamz}(X)$, so we have
    \begin{equation}\label{eq-order-of-t(fz)}
        \ord_r \left(G_a^{\Lambda_{\bfz'}} (t_{\Lambda_{\bfz'}}(fz_1))\right)
        \geq q^{(r-1) \deg f}.
    \end{equation}
    By \eqref{eq-ord_r-valuation-multiplicative}, for $f_1,\ldots,f_m\in A_+$ with $\deg f_1>\cdots>\deg f_m \geq 0$, we have
    \begin{align*}
        \ord_r\left(G_{a_1}^{\Lambda_{\mathbf{z}'}}(t_{\Lambda_{\mathbf{z}'}}(f_1z_1))\cdots G_{a_m}^{\Lambda_{\mathbf{z}'}}(t_{\Lambda_{\mathbf{z}'}}(f_mz_1))\right)
        &= \sum_{j=1}^{m} \ord_r\left(G_{a_j}^{\Lambda_{\mathbf{z}'}}(t_{\Lambda_{\mathbf{z}'}}(f_jz))\right) \\
        &\ge \sum_{j=1}^{m}q^{(r-1)\deg f_j}  
        \geq \sum_{j=0}^{m-1}q^{(r-1)j} 
        = \frac{q^{(r-1)m}-1}{q^{r-1}-1},
    \end{align*}
    and hence the proof is completed.
\end{proof}

We now recall that $\Zcal^{(r)}_L\subseteq \Ocal(\Omega^r)$ denotes the $L$-algebra generated by all rank $r$ multiple Eisenstein series for any $\FF_p$-subalgebra $L\subseteq \CC_\infty$ given in Definition \ref{df-space-of-real-Rcal}.
The following proposition enables us to consider the inverse limit of $\Zcal^{(r)}_L$ in \S~\ref{sec-Hopf-algebra-on-E}.

\begin{prop}\label{prop-taking-constant-map}
     For any $\Fp$-subalgebra $L\subseteq \CC_\infty$ and $r\geq 1$, there is a unique $L$-algebra homomorphism
    \[
    \pi_{r+1,L}:\mathcal{Z}^{(r+1)}_L\to \mathcal{Z}^{(r)}_L
    \]
    such that for all $\ww{a}\in\Ical$,
    \[
    \pi_{r+1,L}(E_{r+1}(\ww{a};\mathbf{z}))=E_r(\ww{a};\mathbf{z}).
    \]
\end{prop}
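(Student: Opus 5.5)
The plan is to realize $\pi_{r+1,L}$ as the operation ``take the constant term of the $t_{\Lambda_{\bfz'}}$-expansion.'' Once this operation is known to be multiplicative and to send $E_{r+1}(\ww{a};\bfz)$ to $E_r(\ww{a};\bfz)$, it is automatically a well-defined homomorphism on the span. Uniqueness is immediate, since $\Zcal^{(r+1)}_L$ is spanned over $L$ by the $E_{r+1}(\ww{a};\bfz)$. So the whole content is existence, in particular well-definedness: any $L$-linear relation among the rank $r+1$ series must persist in rank $r$.

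\textbf{Step 1: the series lie in the domain of the expansion.} Every element of $\Zcal^{(r+1)}_L$ lies in $\Ocal(\Omega^{r+1})^{\Gamma_{r+1}}$. This follows from Proposition~\ref{prop-expansion-MES} (with $r+1$ in place of $r$):
\[
E_{r+1}(\ww{a};\bfz)=E_r(\ww{a};\bfz')+\sum_{i=1}^{m}E_r(\ww{a}^{(i)};\bfz')\,G_{r+1}(\ww{a}_{(i)};\bfz).
\]
Here $E_r(\cdot;\bfz')$ is $\Gamma_{r+1}$-invariant by Remark~\ref{rem-gamma-invariant}(i), and $G_{r+1}$ is invariant by Lemma~\ref{lem-1/t-adic-valuation-of-G(t(fz))}. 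Constants in $L$ are invariant.

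\textbf{Step 2: the expansion has no negative terms.} Consider the subspace $\mathcal{M}\subseteq\Ocal(\Omega^{r+1})^{\Gamma_{r+1}}$ of functions $F$ with $\ord_{r+1}(F)\ge 0$. On $\mathcal{M}$, define $c(F):=F_0\in\Ocal(\Omega^r)$, the constant term of the expansion in Proposition~\ref{prop-t-expansion}. By uniqueness of the expansion, $c$ is $\CC_\infty$-linear. It is also multiplicative: the product of two expansions with no negative powers again has no negative powers, its constant term is the product of the constant terms, and uniqueness of the expansion of $F_1F_2$ identifies the two. Compatibility of products of expansions near infinity is the one analytic point I take from \cite{bbp2024drinfeld}, as in \eqref{eq-ord_r-valuation-multiplicative}. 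Functions $F(\bfz')$ coming from $\Ocal(\Omega^r)$ are their own expansion, so $c(F)=F$ for them.

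\textbf{Step 3: compute $c$ on each series.} By Lemma~\ref{lem-1/t-adic-valuation-of-G(t(fz))}, $\ord_{r+1}(G_{r+1}(\ww{a}_{(i)};\bfz))\ge (q^{ri}-1)/(q^r-1)\ge 1$ for every $i\ge 1$. Hence each term $E_r(\ww{a}^{(i)};\bfz')\,G_{r+1}(\ww{a}_{(i)};\bfz)$ lies in $\mathcal{M}$ and has zero constant term, by multiplicativity of the valuation. Therefore $E_{r+1}(\ww{a};\bfz)\in\mathcal{M}$ and
\[
c\bigl(E_{r+1}(\ww{a};\bfz)\bigr)=E_r(\ww{a};\bfz).
\]
It follows that $\Zcal^{(r+1)}_L\subseteq\mathcal{M}$, that $c$ maps $\Zcal^{(r+1)}_L$ onto $\Zcal^{(r)}_L$, and that $c$ is $L$-linear and multiplicative there. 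Setting $\pi_{r+1,L}:=c|_{\Zcal^{(r+1)}_L}$ gives the desired $L$-algebra homomorphism, with $\pi_{r+1,L}(1)=1$.

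The main obstacle is the justification in Step 2 that $c$ is well defined and multiplicative: uniqueness of the $t_{\Lambda_{\bfz'}}$-expansion, and that multiplying expansions is legitimate on a common neighborhood of infinity. I would cite Proposition~\ref{prop-t-expansion} and the valuation properties \eqref{eq-ord_r-valuation-multiplicative} and \eqref{eq-ord_r-valuation-additive} for this. The rest reduces to the Goss expansion together with the positivity bound of Lemma~\ref{lem-1/t-adic-valuation-of-G(t(fz))}.
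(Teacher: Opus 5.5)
Your proposal is correct and follows essentially the same route as the paper. In both, you place $\Zcal^{(r+1)}_L$ inside $\Ocal(\Omega^{r+1})^{\Gamma_{r+1}}$ via the Goss expansion, define $\pi_{r+1,L}$ as the constant term of the $t_{\Lambda_{\bfz'}}$-expansion, and use the lower bound on $\ord_{r+1}(G_{r+1}(\ww{a}_{(i)};\bfz))$ to see that every term with $i\ge 1$ has order at least $1$. Your restriction of the constant-term map to the subspace $\mathcal{M}$ of functions with $\ord_{r+1}\ge 0$ is a small but welcome refinement: it makes explicit why taking constant terms is multiplicative, which the paper attributes simply to uniqueness of the expansion.
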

\begin{proof}
    First, we recall from Proposition~\ref{prop-expansion-MES} that for each $r \geq 1$ and any non-empty index $\ww{a}\in \Ical$, we have the following Goss expansion
    \[
    E_{r+1}(\ww{a};\mathbf{z})=E_{r}(\ww{a};\mathbf{z}') +\sum_{i=1}^{\dep(\ww{a})}E_{r}(\ww{a}^{(i)};\mathbf{z}')G_{r+1}(\ww{a}_{(i)};\mathbf{z}).
    \]
    By Remark \ref{rem-gamma-invariant} and Lemma \ref{lem-1/t-adic-valuation-of-G(t(fz))}, we have that $E_r(\ww{a}^{(i)};\mathbf{z}'),G_{r+1}(\ww{a}_{(i)};\mathbf{z})\in \mathcal{O}(\Omega^{r+1})^{\Gamma_{r+1}}$ for all $0\leq i\leq \dep(\ww{a})$.
    It follows that
    \[
    E_{r+1}(\ww{a};\mathbf{z})\in \Ocal(\Omega^{r+1})^{\Gamma_{r+1}}
    \]
    and hence
    \[
    \Zcal_L^{(r+1)} \subseteq \Ocal(\Omega^{r+1})^{\Gamma_{r+1}}.
    \]

    Now, given $F\in \Zcal_L^{(r+1)}\subseteq \mathcal{O}(\Omega^{r+1})^{\Gamma_{r+1}}$, by Proposition \ref{prop-t-expansion}, there is a unique sequence $\{F_n\}_{n\in\ZZ}$ in $\Ocal(\Omega^r)$ such that $F$ admits the $t_{\Lambda_{\bfz'}}$-expansion
    \[
    F(\mathbf{z})=\sum_{n=-\infty}^{\infty}F_n(\mathbf{z}')t_{\Lambda_{\mathbf{z}'}}(z_1)^n.
    \] 
    Define the map $\pi_{r+1,L} : \Zcal^{(r)}_L \to \Ocal(\Omega^r)$ by taking the constant term in the $t_{\Lamz}$-expansion, i.e.,
    \[
    \pi_{r+1,L}(F):=F_0\in \Ocal(\Omega^r).
    \]

    Next, we calculate the constant term of $E_{r+1}(\ww{a};\bfz)$.
    By Lemma~\ref{lem-1/t-adic-valuation-of-G(t(fz))} and \eqref{eq-ord_r-valuation-additive}, we have
    \[
    \ord_{r+1}\left(\sum_{i=1}^{\dep(\ww{a})}E_r(\ww{a}^{(i)};\mathbf{z}')G_{r+1}(\ww{a}_{(i)};\mathbf{z})\right)\geq 1,
    \]
    and hence the constant term of $E_{r+1}(\ww{a};\bfz)$ with respect to the $t_{\Lambda_{\bfz'}}$-expansion is precisely $E_r(\ww{a};\bfz')$.
    That is, 
    \[
    \pi_{r+1,L}(E_{r+1}(\ww{a};\mathbf{z}))=E_r(\ww{a};\mathbf{z}).
    \]

    Finally, given arbitrary $F\in \Zcal_L^{(r+1)}$, by writing $F$ as an $L$-linear combination of rank $(r+1)$ multiple Eisenstein series and applying the uniqueness of $t_{\Lamz}$-expansion, we see that $F_0\in\Zcal_L^{(r)}$.
    The fact that
    \[
    \pi_{r+1,L}:\Zcal_L^{(r+1)}\to \Zcal_L^{(r)}
    \] 
    is an $L$-algebra homomorphism follows immediately from the definition of $\pi_{r+1,L}$ and the uniqueness of $t_{\Lamz}$-expansion.
 \end{proof}

\subsection{Linear independence of multiple Eisenstein series}
\label{subsec-linear-independence}
In this section, we investigate linear independence results of multiple Eisenstein series.
We recall that the set 
\[
\Ical_{\le w}=\{\ww{a}\in \Ical : \wt(\ww{a})\le w\}
\]
consists of all indices of weight at most $w\geq 0$.

In view of the following theorem, any finite set of multiple Eisenstein series is $\CC_\infty$-linearly independent, provided that the rank in question is sufficiently large

\begin{thm}\label{thm-linear-indep-of-MES}
    Suppose that $w\in \ZZ_{\geq 0}$.
    Then for each $r>(w+1)+\log_q(w+1)$, 
    \[
    \{E_r(\ww{a};\mathbf{z}):\ww{a} \in \Ical_{\le w}\}\subseteq \Ocal(\Omega^{r})
    \]
    is a $\mathbb{C}_{\infty}$-linearly independent subset of $\mathcal{O}(\Omega^r)^{\Gamma_r}$.
\end{thm}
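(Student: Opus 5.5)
We argue by induction on $r$, combining the Goss expansion of Proposition~\ref{prop-expansion-MES} with the $t_{\Lambda_{\bfz'}}$-expansion and the valuation bound of Lemma~\ref{lem-1/t-adic-valuation-of-G(t(fz))}. Fix $w$ and suppose
\[
\sum_{\ww{a}\in\Ical_{\le w}} c_{\ww{a}} E_r(\ww{a};\bfz)=0,\qquad c_{\ww{a}}\in\CC_\infty .
\]
By the Goss expansion (and $G_r(\emptyset;\bfz)=1$),
\[
\sum_{\ww{b}\in\Ical_{\le w}}\Bigl(\sum_{\ww{a}:\,\ww{a}_{(i)}=\ww{b}} c_{\ww{a}}E_{r-1}(\ww{a}^{(i)};\bfz')\Bigr) G_r(\ww{b};\bfz)=0 .
\]
Here the inner sum runs over pairs $(\ww{a},i)$ with $\ww{a}$ the concatenation of $\ww{b}$ and a suffix $\ww{c}$, and $i=\dep(\ww{b})$. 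Each coefficient $\Phi_{\ww{b}}:=\sum_{\ww{c}} c_{\ww{b}\ww{c}}E_{r-1}(\ww{c};\bfz')$ lies in $\Ocal(\Omega^{r-1})$, viewed inside $\Ocal(\Omega^r)^{\Gamma_r}$. Thus the problem reduces to two tasks: show that the multiple Goss sums $G_r(\ww{b};\bfz)$ with $\wt(\ww{b})\le w$ are linearly independent over $\Ocal(\Omega^{r-1})$ inside the $t_{\Lambda_{\bfz'}}$-expansion ring, and then use the inductive hypothesis in rank $r-1$ to conclude $c_{\ww{a}}=0$.

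For the first task I would compute the leading term of the $t_{\Lambda_{\bfz'}}$-expansion of $G_r(\ww{b};\bfz)$ for $\ww{b}=(b_1,\dots,b_m)$. The proof of Lemma~\ref{lem-1/t-adic-valuation-of-G(t(fz))} shows that $t_{\Lambda_{\bfz'}}(fz_1)=g_{f,(r-1)\deg f}^{-1}\,t^{q^{(r-1)\deg f}}(1+\cdots)$, where $t=t_{\Lambda_{\bfz'}}(z_1)$. Since $G_{b}(X)=X^{b}+(\text{lower})$ with $X\mid G_b(X)$, the term of smallest $t$-order should come from $\deg f_j=m-j$, i.e.\ $f_m=1$, $f_{m-1}$ of degree $1$, and so on. The resulting candidate leading exponent is
\[
N(\ww{b})=\sum_{j=1}^m b'_j\, q^{(r-1)(m-j)},
\]
where $b'_j$ is the lowest power of $X$ occurring in $G_{b_j}(X)$, so that $1\le b'_j\le b_j\le w$. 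Its coefficient is a sum over monic $f_j$ of fixed degrees of products of inverse leading coefficients $g_{f,(r-1)\deg f}(\bfz')^{-1}$ and the lowest Goss coefficient.

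The hypothesis $r>(w+1)+\log_q(w+1)$ should make $q^{r-1}$ larger than any digit $b'_j\le w$, even after accounting for the mixed terms. Then $N(\ww{b})$ is a base-$q^{r-1}$ representation that determines $\ww{b}$ through $(m,b'_j)$. More precisely, I would show that all other $t$-exponents appearing in $G_r(\ww{b};\bfz)$ have the form $\sum_j e_j q^{(r-1)d_j}$ with $d_j$ strictly decreasing and $e_j\le w$ (and larger), and hence are distinct from $N(\ww{b}')$ for every $\ww{b}'\neq\ww{b}$ of weight at most $w$. A triangularity argument, choosing among the $\ww{b}$ with $\Phi_{\ww{b}}\ne0$ the one of minimal $N(\ww{b})$ and reading off the coefficient of $t^{N(\ww{b})}$, would then force $\Phi_{\ww{b}}\cdot(\text{leading coefficient})=0$. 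This requires the leading coefficient to be a nonzero element of $\Ocal(\Omega^{r-1})$ and requires $\Ocal(\Omega^{r-1})$ to have no zero divisors, which holds since $\Omega^{r-1}$ is connected.

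This step is the main obstacle. The leading coefficient is a character-type sum over the $q^{\deg f}$ monic polynomials of a given degree, and it could vanish identically. The extra $\log_q(w+1)$ in the bound on $r$ suggests that one must also handle cancellation, or carries, in the exponents coming from the $q$-power structure of Goss polynomials. To deal with this, I would restrict $\deg f_j$ as above and use the exact form of $\phi_f$ for $\deg f\le 1$: there $\phi_\theta$ has leading coefficient $g_{\theta,r-1}$, and the sum over $f=\theta+c$ with $c\in\FF_q$ should be analyzed through its expansion in $t$. If this becomes messy, the fallback is to order by $\dep$ first and within fixed depth compare the top digit.

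Once all $\Phi_{\ww{b}}=0$, the relation $\Phi_{\ww{b}}=\sum_{\ww{c}}c_{\ww{b}\ww{c}}E_{r-1}(\ww{c};\bfz')=0$ for $\ww{b}=\emptyset$ involves indices of weight at most $w$. Its validity follows from the induction hypothesis in rank $r-1$, provided the threshold is adapted. Alternatively, apply the triangular argument to all the $\Phi_{\ww{b}}$ and induct on weight via the top-depth term, which gives $c_{\ww{a}}=0$ for all $\ww{a}$.
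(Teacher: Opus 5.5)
\textbf{There is a genuine gap.} Your reduction via the Goss expansion to $\sum_{\ww{b}}\Phi_{\ww{b}}G_r(\ww{b};\bfz)=0$ is sound. The problem is the ``first task'': proving $\Ocal(\Omega^{r-1})$-linear independence of all $G_r(\ww{b};\bfz)$ is unnecessary, and the lowest-order triangularity you sketch for it fails, for two reasons.

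\emph{The leading coefficient vanishes in depth $\ge 2$.} For monic $f$ of degree $d\ge 1$, the top coefficient $g_{f,(r-1)d}(\bfz')$ of $\phi^{\Lambda_{\bfz'}}_f$ depends only on the leading term of $f$. So all $q^d$ monic $f$ of degree $d$ give the same leading term of $t_{\Lambda_{\bfz'}}(fz_1)^{b'}$, and these sum to $q^d\cdot(\cdots)=0$. Hence the coefficient of $t^{N(\ww{b})}$ in $G_r(\ww{b};\bfz)$ is identically zero as soon as $\dep(\ww{b})\ge 2$. This is exactly the cancellation you feared.

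\emph{The lowest exponent does not determine the index, even in depth one.} The lowest power $b'$ of $X$ in $G_b^{\Lambda_{\bfz'}}(X)$ does not determine $b$. For example, $G^{\Lambda_{\bfz'}}_{q+1}(X)=X^{q+1}+\alpha_1(\bfz')X^2$ while $G^{\Lambda_{\bfz'}}_2(X)=X^2$. So $N(\ww{b})$ is not injective once $w\ge q+1$, and a minimal-exponent argument cannot separate such indices.

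\textbf{The missing idea.} You only need $\Phi_{(b_1)}=0$ for the depth-one $\ww{b}=(b_1)$, and this can be read off modulo $t^{q^{r-1}}$, where $t=t_{\Lambda_{\bfz'}}(z_1)$. By Lemma~\ref{lem-1/t-adic-valuation-of-G(t(fz))}, $\ord_r G_r(\ww{b};\bfz)\ge q^{r-1}+1$ whenever $\dep(\ww{b})\ge 2$. In $G_r((b_1);\bfz)=\sum_{f\in A_+}G^{\Lambda_{\bfz'}}_{b_1}(t_{\Lambda_{\bfz'}}(fz_1))$, every term with $f\neq 1$ has order $\ge q^{r-1}$. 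Hence $G_r((b_1);\bfz)\equiv G^{\Lambda_{\bfz'}}_{b_1}(t)$, a \emph{monic} polynomial of degree $b_1\le w<q^{r-1}$.

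Now use the top degree rather than the lowest. Read off the coefficients of $t^{w},t^{w-1},\dots,t^{1}$ in descending order. This successively gives
$\Phi_{(b_1)}=\sum_{\ww{c}}c_{(b_1)\ww{c}}E_{r-1}(\ww{c};\bfz')=0$,
in which only indices $\ww{c}$ of weight at most $w-b_1\le w-1$ occur.

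Consequently, the induction should run on the weight $w$, with the rank dropping by one at each step, not on $r$ alone. It should also be applied to $\Phi_{(b_1)}$, not to $\Phi_{\emptyset}$: the latter would require weight $w$ in rank $r-1$. Finally, the $\log_q(w+1)$ term in the hypothesis has nothing to do with carries. It only guarantees $w<q^{r-1}$ and that the hypothesis persists from $(w,r)$ to $(w-1,r-1)$.
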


\begin{proof}
    We proceed by induction on $w$.
    When $w=0$, the only possible index is the empty index.
    Then for $r > 0+1+\log_q(0+1) = 1$, we have $E_r(\emptyset;\mathbf{z})=1$, which is clearly $\CC_\infty$-linearly independent.
    This proves the base case $w=0$.

    Let $w_0$ be a non-negative integer and assume that the assertion holds for $w= w_0$.
    Now we let $w=w_0+1$ and fix any integer $r>(w+1)+\log_q(w+1)$. 
    Assume that there exists $g_{\ww{a}}\in \mathbb{C}_{\infty}$ such that 
    \begin{equation}\label{eq-linear-combination}
        \sum_{\ww{a} \in \Ical_{\le w}} g_{\ww{a}}E_r(\ww{a};\mathbf{z}) = 0.
    \end{equation}
    
    Let $\ww{a} = (a_1,\ldots, a_m) \in \Ical_{\leq w}$ be any index of $\wt(\ww{a})\leq w$. 
    By Lemma \ref{lem-1/t-adic-valuation-of-G(t(fz))}, for $2\le i\le \dep(\ww{a})$, 
    \begin{equation}\label{eq-G_r-lower-bound1}
        \ord_r(G_r(\ww{a}_{(i)};\mathbf{z}))
    \ge \frac{q^{(r-1)i}-1}{q^{r-1}-1}
    \ge \frac{q^{2(r-1)}-1}{q^{r-1}-1}
    = q^{r-1}+1>q^{r-1},
    \end{equation}
    where $\ww{a}_{(i)}=(a_1,\ldots,a_i)$ is defined in Definition \ref{df-indices}.
    On the other hand,
    \[
     G_r(\ww{a}_{(1)};\mathbf{z})
     = \sum_{f\in A_+} G_{a_1}^{\Lambda_{\mathbf{z}'}}(t_{\Lambda_{\mathbf{z}'}}(fz_1))
     = G_{a_1}^{\Lambda_{\mathbf{z}'}}(t_{\Lambda_{\mathbf{z}'}}(z_1))+\sum_{1\neq f\in A_+}G_{a_1}^{\Lambda_{\mathbf{z}'}}(t_{\Lambda_{\mathbf{z}'}}(fz_1)).
    \]
    By \eqref{eq-order-of-t(fz)}, for $f\in A_+$ and $f\neq 1$, we have
    \[
        \ord_r \left(G_{a_1}^{\Lambda_{\bfz'}} (t_{\Lambda_{\bfz'}}(fz_1))\right)
    \ge q^{(r-1) \deg f}\geq q^{r-1}.
    \]
    It follows from \eqref{eq-ord_r-valuation-additive} that
    \begin{equation}\label{eq-G_r-lower-bound2}
    \ord_r\left(\sum_{1\neq f\in A_+} G_{a_1}^{\Lambda_{\mathbf{z}'}}(t_{\Lambda_{\mathbf{z}'}}(fz_1))\right)
    \ge q^{r-1}.
    \end{equation}
    By Proposition \ref{prop-basic-properties-for-Goss-polynomials} (iii), $G_{a_1}^{\Lambda_{\mathbf{z}'}}(X)$ is monic and
    $$\deg_XG_{a_1}^{\Lambda_{\mathbf{z}'}}(X) = a_1\le \wt(\ww{a})\le w < (w+1)q^w=q^{w+\log_q(w+1)}<q^{r-1},$$ so we have
    \begin{equation}\label{eq-t-expansion-of-G_r}
        G_r(\ww{a}_{(1)};\mathbf{z})= F_1(t_{\Lamz}(z_1))t_{\Lambda_{\bfz'}}(z_1)^{q^{r-1}} + t_{\Lambda_{\mathbf{z}'}}(z_1)^{a_1}+\text{lower degree terms}
    \end{equation}
    for some $F_1(X)\in \mathcal{O}(\Omega^{r-1})\powerseries{X}$.
    Therefore, by Proposition~\ref{prop-expansion-MES} and the computation above, we have
    \begin{align} \label{eq-congruence-of-MES-modulo-t^(q^(r-1))}
        E_r(\ww{a};\mathbf{z}) 
        &= E_{r-1}(\ww{a};\mathbf{z}')+E_{r-1}(\ww{a}^{(1)};\mathbf{z}')G_r(\ww{a}_{(1)};\mathbf{z})+\sum_{i=2}^{\dep(\ww{a})}E_{r-1}(\ww{a}^{(i)};\mathbf{z}')G_r(\ww{a}_{(i)};\mathbf{z}) \\
        &= \begin{multlined}[t]
             E_{r-1}(\ww{a};\mathbf{z}') + E_{r-1}(\ww{a}^{(1)};\bfz')
             \left(G_{a_1}^{\Lambda_{\mathbf{z}'}}(t_{\Lambda_{\mathbf{z}'}}(z_1)) + \sum_{1\neq f\in A_+}G_{a_1}^{\Lambda_{\mathbf{z}'}}(t_{\Lambda_{\mathbf{z}'}}(fz_1))\right) \\
             + \sum_{i=2}^{\dep(\ww{a})}E_{r-1}(\ww{a}^{(i)};\mathbf{z}')G_r(\ww{a}_{(i)};\mathbf{z})
        \end{multlined} \notag\\
        &= \text{lower degree terms} + E_{r-1}(\ww{a}^{(1)};\mathbf{z}')t_{\Lambda_{\mathbf{z}'}}(z_1)^{a_1} + F_2(t_{\Lamz}(z_1))t_{\Lambda_{\bfz'}}(z_1)^{q^{r-1}} \notag
    \end{align}
    for some $F_2(X)\in \mathcal{O}(\Omega^{r-1})\powerseries{X}$.

    Observe that we may write \eqref{eq-linear-combination} as 
    \begin{equation}\label{eq-decompose-linear-relation-MES}
        0=\sum_{\ww{a} \in \Ical_{\le w}} g_{\ww{a}}E_r(\ww{a};\mathbf{z})=\sum_{i=1}^{w}\left(\sum_{\substack{\ww{a} \in \Ical_{\le w} \\ a_1=i}}g_{\ww{a}}E_r(\ww{a};\mathbf{z})\right)+g_{\varnothing}E_r(\varnothing;\bfz).
    \end{equation}
    Notice that the only index $\ww{a}\in\Ical_{\leq w}$ of $\wt(\ww{a}) = w$ with $a_1 = w$ is $(w)$.
    By \eqref{eq-congruence-of-MES-modulo-t^(q^(r-1))}, the coefficient of $t_{\Lambda_{\mathbf{z}'}}(z_1)^w$ in \eqref{eq-decompose-linear-relation-MES} is
    \[
    g_{(w)}E_{r-1}(\emptyset;\mathbf{z}') = g_{(w)},
    \]
    so by the uniqueness part of Proposition \ref{prop-t-expansion}, $g_{(w)}=0$.
    
    Assume that $g_{\ww{a}}=0$ for any index $\ww{a}\in\Ical_{\leq w}$ with $a_1 \geq i$ for some $2\le i\leq w$.
    We claim that $g_{\ww{a}}=0$ for any index $\ww{a}\in\Ical_{\leq w}$ with $a_1 = i-1$.
    By the assumption and \eqref{eq-congruence-of-MES-modulo-t^(q^(r-1))}, we observe that the coefficient of $t_{\Lambda_{\mathbf{z}'}}(z_1)^{i-1}$ in \eqref{eq-decompose-linear-relation-MES} is
    \[
    \sum_{\substack{\ww{a} \in \Ical_{\le w} \\ a_1=i-1}}g_{\ww{a}}E_{r-1}(\ww{a}^{(1)};\mathbf{z}')=0.
    \]
    We note that 
    \[
    \max \{\wt(\ww{a}^{(1)}):\ww{a} \in \Ical_{\le w}, a_1=i-1\}\le w-(i-1) \leq w-1= w_0.
    \]
    Since $r-1>w+\log_q(w+1)> w_0+1+\log_q(w_0+1)$, by induction hypothesis, the set 
    $$\{E_{r-1}(\ww{a}^{(1)};\mathbf{z}'):\ww{a} \in \Ical_{\le w}, a_1=i-1\}$$
    is $\CC_\infty$-linearly independent.
    Thus, we have $g_{\ww{a}}=0$ for any index $\ww{a}\in\Ical_{\leq w}$ with $a_1 = i-1$.
    
    We have shown that $g_{\ww{a}}=0$ for any non-empty index $\ww{a}\in\Ical_{\leq w}$.
    Thus, \eqref{eq-decompose-linear-relation-MES} turns into $g_{\varnothing}E_r(\varnothing;\bfz)=g_{\varnothing}=0$.
    We now conclude that the set 
    \[
    \{E_r(\ww{a};\mathbf{z}):\ww{a}\in \Ical_{\le w}\}=\{E_r(\ww{a};\mathbf{z}):\ww{a}\in \Ical_{\le w_0+1}\}
    \]
    is $\CC_\infty$-linearly independent, and hence the result follows from induction on $w$.
\end{proof}

\begin{prop}\label{prop-O(Ω^r-1)-linear-indep-of-MES}
    Let $r \geq 2$ and $\ww{a}_1, \ldots, \ww{a}_q \in \Ical$ such that for $1\le i\le q$, the first entry of $\ww{a}_i$ is $i$.
    Then  $E_r(\ww{a}_1;\mathbf{z}),\ldots,E_r(\ww{a}_q,\mathbf{z})$ are $\mathcal{O}(\Omega^{r-1})$-linearly independent.
\end{prop}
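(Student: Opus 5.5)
The plan is to compare coefficients in the $t_{\Lambda_{\bfz'}}$-expansion, as in the proof of Theorem~\ref{thm-linear-indep-of-MES}, with the coefficients $h_i$ now lying in $\Ocal(\Omega^{r-1})$. Suppose $\sum_{i=1}^q h_i(\bfz')E_r(\ww{a}_i;\bfz)=0$ with $h_i\in\Ocal(\Omega^{r-1})$. By Remark~\ref{rem-gamma-invariant}, each $h_i$ is $\Gamma_r$-invariant with constant $t_{\Lamz}$-expansion. So the coefficient of $t_{\Lamz}(z_1)^n$ in the relation is simply $\sum_i h_i$ times the $n$-th coefficient of $E_r(\ww{a}_i;\bfz)$, by the uniqueness in Proposition~\ref{prop-t-expansion}.

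First I will compute each $E_r(\ww{a}_i;\bfz)$ modulo $t_{\Lamz}(z_1)^{q}$, using Proposition~\ref{prop-expansion-MES} and the estimates from the proof of Theorem~\ref{thm-linear-indep-of-MES}.
\begin{itemize}
\item By \eqref{eq-G_r-lower-bound1}, the terms $G_r(\ww{a}_{(j)};\bfz)$ with $j\ge 2$ have $\ord_r>q^{r-1}\ge q$.
\item By \eqref{eq-G_r-lower-bound2}, the summands with $f\ne 1$ in $G_r(\ww{a}_{(1)};\bfz)$ have $\ord_r\ge q^{r-1}\ge q$, since $r\ge 2$.
\item Since the first entry of $\ww{a}_i$ is $i\le q$, Proposition~\ref{prop-basic-properties-for-Goss-polynomials}(ii) gives $G_i^{\Lamz}(X)=X^i$.
\end{itemize}
Hence
\[
E_r(\ww{a}_i;\bfz)=E_{r-1}(\ww{a}_i;\bfz')+E_{r-1}(\ww{a}_i^{(1)};\bfz')\,t_{\Lamz}(z_1)^{i}+(\text{terms of order}\ge q).
\]
For $1\le i\le q-1$, this shows that the coefficient of $t_{\Lamz}(z_1)^j$ with $1\le j\le q-1$ is $E_{r-1}(\ww{a}_i^{(1)};\bfz')$ if $j=i$ and $0$ otherwise. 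For $i=q$, the coefficients in degrees $1,\dots,q-1$ all vanish. I do not need to control the coefficient in degree $q$, which for $r=2$ can receive contributions from $\deg f=1$.

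Comparing coefficients of $t_{\Lamz}(z_1)^j$ for $1\le j\le q-1$ then gives $h_j(\bfz')E_{r-1}(\ww{a}_j^{(1)};\bfz')=0$ in $\Ocal(\Omega^{r-1})$. The relation then reduces to $h_qE_r(\ww{a}_q;\bfz)=0$, whose constant term gives $h_q(\bfz')E_{r-1}(\ww{a}_q;\bfz')=0$. To conclude $h_j=0$ for all $j$, I need two facts.
\begin{enumerate}
\item $\Ocal(\Omega^{r-1})$ has no zero divisors. This holds because $\Omega^{r-1}$ is a connected smooth rigid space, so the identity theorem applies; I will cite this.
\item $E_{s}(\ww{b};\bfz)\ne 0$ for every $s\ge 1$ and every $\ww{b}\in\Ical$.
\end{enumerate}
For (ii), apply the constant-term maps $\pi_{s,L}$ of Proposition~\ref{prop-taking-constant-map} repeatedly. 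They send $E_s(\ww{b};\bfz)$ to $E_1(\ww{b})$, which is Thakur's multiple zeta value and is nonzero by Thakur's nonvanishing theorem; the empty index gives $1$. A nonzero image forces a nonzero source.

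The main obstacle is bookkeeping in the case $i=q$, $r=2$, where the $t_{\Lamz}(z_1)^q$ coefficient is polluted. Isolating $h_q$ through the constant term after killing $h_1,\dots,h_{q-1}$ avoids this.
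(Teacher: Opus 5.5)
Your proposal is correct and takes essentially the same route as the paper. You expand each $E_r(\ww{a}_i;\bfz)$ modulo $t_{\Lamz}(z_1)^q$ using the Goss expansion and the order estimates. You then compare the coefficients of $t_{\Lamz}(z_1)^j$ for $1\le j\le q-1$ to get $h_jE_{r-1}(\ww{a}_j^{(1)};\bfz')=0$, and finally dispose of $h_q$.

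The paper simply asserts that these identities force $h_j=0$ and then $h_q=0$. Your explicit justification fills in what it leaves tacit: $\Ocal(\Omega^{r-1})$ is an integral domain, and $E_s(\ww{b};\bfz)\neq 0$, which you get by pushing down through the maps $\pi_{s,L}$ to Thakur's nonvanishing of multiple zeta values.
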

\begin{proof}
    Given any non-empty index $\ww{b} = (b_1,\ldots,b_m)$ with $b_1\le q$, by Proposition \ref{prop-expansion-MES}, we have 
    \begin{align*}
        E_r(\ww{b};\mathbf{z}) 
        &= E_{r-1}(\ww{b};\mathbf{z}')+E_{r-1}(\ww{b}^{(1)};\mathbf{z}')G_r(\ww{b}_{(1)};\mathbf{z})+\sum_{i=2}^{\dep(\ww{b})}E_{r-1}(\ww{b}^{(i)};\mathbf{z}')G_r(\ww{b}_{(i)};\mathbf{z}) \\
        &= \begin{multlined}[t]
             E_{r-1}(\ww{b};\mathbf{z}') + E_{r-1}(\ww{b}^{(1)};\bfz')
             \left(G_{b_1}^{\Lambda_{\mathbf{z}'}}(t_{\Lambda_{\mathbf{z}'}}(z_1)) + \sum_{1\neq f\in A_+}G_{b_1}^{\Lambda_{\mathbf{z}'}}(t_{\Lambda_{\mathbf{z}'}}(fz_1))\right) \\
             + \sum_{i=2}^{\dep(\ww{b})}E_{r-1}(\ww{b}^{(i)};\mathbf{z}')G_r(\ww{b}_{(i)};\mathbf{z}).
        \end{multlined}
    \end{align*}
    The same argument used to derive the inequalities in \eqref{eq-G_r-lower-bound1} and \eqref{eq-G_r-lower-bound2} shows that for $2\le i\le \dep(\ww{b})$,
    \[
    \ord_r(G_r(\ww{b}_{(i)};\mathbf{z}))>q^{r-1}\ge q
    \]
    and
    \[
    \ord_r\left(\sum_{1\ne f\in A_+}G_{b_1}^{\Lambda_{\mathbf{z}'}}(t_{\Lambda_{\mathbf{z}'}}f(z_1))\right)\ge q^{r-1}\ge q.
    \]
    On the other hand, by Proposition \ref{prop-basic-properties-for-Goss-polynomials} (ii), for $1\leq n\le q$, we have
    \[
    G_n^{\Lambda_{\mathbf{z}'}}(t_{\Lambda_{\mathbf{z}'}}(z_1))=t_{\Lambda_{\mathbf{z}'}}(z_1)^n.
    \]
    
    Recall our assumption that for each $i=1,\ldots,q$, the first entry of $\ww{a}_i$ is $i$.
    Therefore, applying the above discussion to $\ww{b}=\ww{a}_1,\ldots,\ww{a}_q$, we obtain that for $1\le i\le q-1$,
    \begin{equation}\label{eq-ai-expansion}
        E_r(\ww{a}_i;\mathbf{z})=E_{r-1}(\ww{a}_i;\mathbf{z})+E_{r-1}(\ww{a}_i^{(1)};\mathbf{z}')t_{\Lambda_{\mathbf{z}'}}(z_1)^{i}+t_{\Lambda_{\mathbf{z}'}}(z_1)^qF_i(t_{\Lambda_{\mathbf{z}'}}(z_1))
    \end{equation}
    and 
    \begin{equation}\label{eq-aq-expansion}
        E_r(\ww{a}_q;\mathbf{z})=E_{r-1}(\ww{a}_q;\mathbf{z})+t_{\Lambda_{\mathbf{z}'}}(z_1)^qF_q(t_{\Lambda_{\mathbf{z}'}}(z_1))
    \end{equation}
    for some $F_1,\ldots,F_q\in \mathcal{O}(\Omega^{r-1})[\![X]\!]$.

    Assume 
    \[
    h_1E_r(\ww{a}_1;\mathbf{z})+\cdots+h_qE_r(\ww{a}_q;\mathbf{z})=0
    \]
    for some $h_i\in\Ocal(\Omega^{r-1})$, $1\leq i\leq q$.
    By~\eqref{eq-ai-expansion} and~\eqref{eq-aq-expansion}, the above identity becomes
    \[
    \sum_{i=1}^{q}h_iE_{r-1}(\ww{a}_i;\mathbf{z}')+\sum_{i=1}^{q-1}h_iE_{r-1}(\ww{a}_i^{(1)};\mathbf{z}')t_{\Lambda_{\mathbf{z}'}}(z_1)^i+\left(\sum_{i=1}^{q}h_iF_i(t_{\Lambda_{\mathbf{z}'}}(z_1))\right)t_{\Lambda_{\mathbf{z}'}}(z_1)^q=0.
    \]
    By the uniqueness of Proposition \ref{prop-t-expansion}  for $t_{\Lambda_{\mathbf{z}'}}$-expansion, for $i=1,\ldots,q-1$, we have 
    \[
    h_iE_{r-1}(\ww{a}_i^{(1)};\mathbf{z}')=0,
    \]
    which implies that $h_i=0$, and hence $h_q=0$ as well.
    The proof is completed.
\end{proof}

\begin{rem}
    As a consequence of Proposition~\ref{prop-O(Ω^r-1)-linear-indep-of-MES}, we obtain that for $r\ge 2$, the functions $E_r(q;\bfz)$ and $E_r((1,q-1);\bfz)$ are $\Ocal(\Omega^{r-1})$-linearly independent.
    In particular, 
    \[
    E_r(q;\mathbf{z})-(\theta-\theta^q) E_r((1,q-1);\mathbf{z})\neq 0.
    \]
    Namely, Thakur's fundamental relation \cite[Theorem 5]{Tha09R1}
    \[
    \zeta_A(q) - (\theta - \theta^q)\zeta_A(1,q-1) =0
    \]
    cannot be lifted to the same identity among multiple Eisenstein series of rank $r\geq 2$.
\end{rem}

\section{Algebra structure on the \texorpdfstring{$q$}{q}-shuffle algebra \texorpdfstring{$\mathcal{E}$}{E}}\label{sec-Hopf-algebra-on-E}

In this section, we first utilize the machinery of multiple Eisenstein series to prove Theorem~\ref{thm-main-thm1}~(ii), which affirms Shi's associativity conjecture proposed in
\cite{Shi2018}.
Moreover, we prove Theorem~\ref{thm-main-thm2}, which consequently resolves the conjecture proposed in the authors' former paper \cite{CCHT2025}.

\subsection{Associativity of \texorpdfstring{$\mathcal{E}$}{E}}

Recall that for any $\FF_p$-subalgebra $L\subseteq\CC_\infty$, by Proposition \ref{prop-taking-constant-map}, the $L$-algebras $\{\Zcal^{(r)}_L\}_{r\in \NN}$ of multiple Eisenstein series form an inverse system with respect to $\pi_{r+1}:\mathcal{Z}^{(r+1)}_L\to \mathcal{Z}^{(r)}_L$, and hence $\varprojlim\limits_{r\in \mathbb{N}} \mathcal{Z}^{(r)}_L$ is an $L$-algebra.
We further recall that $(\Rcal,\ast)$ is defined in Definition~\ref{df-zeta-values-as-words}. 
The linear independence result established above enables us to embed $L\otimes_{\mathbb{F}_p} \mathcal{R}$ into $\varprojlim\limits_{{r\in \mathbb{N}}} \mathcal{Z}^{(r)}_L$ as in the following theorem.

\begin{thm}\label{thm-Rcal-inverse-limit}
    Let $L\subseteq \CC_\infty$ be an $\Fp$-subalgebra.
    We define
    \[
    \bfE_L:L\otimes_{\FF_p} \Rcal 
    \to 
    \varprojlim_{r\in \NN} \Zcal^{(r)}_L
    \]
    to be the unique $L$-linear map such that
    \[
    \bfE_L(1\otimes x_{\ww{a}}):=(E_r(\ww{a};\bfz))_{r\in \NN}.
    \]
    Then $\bfE_L$ is an injective $L$-algebra homomorphism.
\end{thm}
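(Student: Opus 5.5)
First I check that $\bfE_L$ is well defined as a map into the inverse limit. By Proposition~\ref{prop-taking-constant-map}, for every index $\ww{a}$ and every $r\ge 1$ we have $\pi_{r+1,L}(E_{r+1}(\ww{a};\bfz))=E_r(\ww{a};\bfz)$. Hence $(E_r(\ww{a};\bfz))_{r\in\NN}$ is a compatible family, and $L$-linear combinations of such families are again compatible. So the $L$-linear extension of $1\otimes x_{\ww{a}}\mapsto (E_r(\ww{a};\bfz))_r$ lands in $\varprojlim_r \Zcal^{(r)}_L$. Since $\Scal$ is an $\FF_p$-basis of $\Rcal$, the set $\{1\otimes x_{\ww{a}}\}$ is an $L$-basis of $L\otimes_{\FF_p}\Rcal$, so $\bfE_L$ is uniquely determined.

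Next, multiplicativity. The $L$-algebra structure on $L\otimes_{\FF_p}\Rcal$ is the base change of $\ast$, and the inverse limit carries componentwise multiplication. It therefore suffices to verify on basis elements that
\[
\bfE_L\bigl(1\otimes (x_{\ww{a}}\ast x_{\ww{b}})\bigr)=\bigl(E_r(\ww{a};\bfz)E_r(\ww{b};\bfz)\bigr)_{r\in\NN}.
\]
Write $x_{\ww{a}}\ast x_{\ww{b}}=\sum_{\ww{c}}\epsilon_{\ww{c}}x_{\ww{c}}$ with $\epsilon_{\ww{c}}\in\FF_p$. Then the $r$-th component of the left-hand side is $\sum\epsilon_{\ww{c}}E_r(\ww{c};\bfz)=\bfE_r(x_{\ww{a}}\ast x_{\ww{b}})$. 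By Theorem~\ref{thm-E-hat} this equals $E_r(\ww{a};\bfz)E_r(\ww{b};\bfz)$ for every $r$. Bilinearity then extends the identity to all elements, and $\bfE_L(1\otimes 1)=(1)_r$ is the unit.

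Finally, injectivity, which is the main point. Suppose $\xi=\sum_{\ww{a}}c_{\ww{a}}\otimes x_{\ww{a}}$, a finite sum with $c_{\ww{a}}\in L$, satisfies $\bfE_L(\xi)=0$. Choose $w$ with all indices occurring in $\xi$ lying in $\Ical_{\le w}$, and choose $r>(w+1)+\log_q(w+1)$. The $r$-th component gives
\[
\sum_{\ww{a}\in\Ical_{\le w}}c_{\ww{a}}E_r(\ww{a};\bfz)=0
\]
in $\Ocal(\Omega^r)$. Since $L\subseteq\CC_\infty$, Theorem~\ref{thm-linear-indep-of-MES} forces every $c_{\ww{a}}=0$, hence $\xi=0$.

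The only subtle step is that injectivity needs the sufficiently large rank. No single $\bfE_r$ is injective, as the remark on Thakur's relation shows happens at $r=1$, and this is exactly why the inverse limit is used. Theorem~\ref{thm-linear-indep-of-MES} handles this directly, so I expect no real obstacle beyond bookkeeping that the elements $1\otimes x_{\ww{a}}$ form an $L$-basis.
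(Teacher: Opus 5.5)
Your proof is correct and follows the paper's argument: multiplicativity is inherited componentwise from Theorem~\ref{thm-E-hat}, and injectivity comes from applying Theorem~\ref{thm-linear-indep-of-MES} in a single rank $r>(w+1)+\log_q(w+1)$; your explicit compatibility check via Proposition~\ref{prop-taking-constant-map} spells out a point the paper leaves implicit. One caveat: your closing aside that no single $\bfE_r$ is injective is not established anywhere. The Thakur remark only gives a kernel element in rank $r=1$, and only when $\theta\in L$. Nothing in your proof depends on that aside, so you can drop it.
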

\begin{proof}
    Recall that for $r\ge 1$, the unique $\Fp$-linear map $\bfE_r : \Rcal \to \Zcal^{(r)}, x_{\ww{a}} \mapsto E_r(\ww{a};\bfz)$ given in Theorem~\ref{thm-E-hat} is an $\FF_p$-algebra homomorphism.
    It follows immediately that  $\bfE_L$ is an $L$-algebra homomorphism. 

    To prove the injectivity, suppose that $\bfE_L(\xi) = 0$ for some $\xi \in L\otimes_{\Fp} \Rcal$ and write
    \[
    \xi = \sum_{\ww{a}\in \Ical_{\le w}} c_{\ww{a}}\otimes x_{\ww{a}},
    \]
    where $c_{\ww{a}}\in L$  for all $\ww{a}\in \Ical_{\leq w}$ and $w\in \NN$.
    Now, we have
    \[
    0 = \bfE_L(\xi) = \left(\sum_{\ww{a}\in \Ical_{\leq w}} c_{\ww{a}} E_r(\ww{a};\bfz)\right)_{r\in \NN} \in \varprojlim_{r\in\NN}\Zcal_L^{(r)}.
    \]
    By Theorem~\ref{thm-linear-indep-of-MES}, for each $r>(w+1)+\log_q(w+1)$, the set
    \[
    \{E_r(\ww{a};\bfz): \ww{a}\in \Ical_{\le w}\}
    \]
    is a $\CC_\infty$-linearly independent set, which is in particular an $L$-linearly independent set.
    Therefore,  $c_{\ww{a}} = 0$ for all $\ww{a}\in \Ical$, which implies that $\xi =0$.
    The proof is completed.
\end{proof}

In particular, since $\Rcal$ embeds into the associative algebra $\varprojlim\limits_{r\in \NN} \Zcal^{(r)}$, we obtain the following corollary.

\begin{cor}\label{cor-assoc-of-R}
    $(\Rcal,\ast)$ forms a commutative associative $\mathbb{F}_p$-algebra.
\end{cor}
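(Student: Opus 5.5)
Corollary~\ref{cor-assoc-of-R} follows by transporting associativity and commutativity from a ring of functions back to $\Rcal$ through the injective homomorphism of Theorem~\ref{thm-Rcal-inverse-limit}. I take $L=\FF_p$, so $\FF_p\otimes_{\FF_p}\Rcal=\Rcal$ and $\bfE_{\FF_p}:\Rcal\to\varprojlim_{r\in\NN}\Zcal^{(r)}$ is an injective $\FF_p$-algebra homomorphism.

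First I check that the target is commutative and associative. Each $\Zcal^{(r)}$ is an $\FF_p$-subspace of $\Ocal(\Omega^r)$. It is closed under pointwise multiplication, because Theorem~\ref{thm-E-hat} expresses $E_r(\ww{a};\bfz)E_r(\ww{b};\bfz)$ as $\bfE_r(x_{\ww{a}}\ast x_{\ww{b}})\in\Zcal^{(r)}$. Its multiplication is therefore the restriction of the multiplication of the commutative associative ring $\Ocal(\Omega^r)$, so $\Zcal^{(r)}$ is a commutative associative $\FF_p$-algebra. The transition maps $\pi_{r+1,\FF_p}$ are algebra homomorphisms by Proposition~\ref{prop-taking-constant-map}. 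Hence the inverse limit is a subring of $\prod_r\Zcal^{(r)}$ with componentwise operations, and it is again commutative and associative.

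Next I transport the identities. For $\afk,\bfk,\cfk\in\Rcal$, multiplicativity of $\bfE_{\FF_p}$ gives
\[
\bfE_{\FF_p}((\afk\ast\bfk)\ast\cfk)=\bfE_{\FF_p}(\afk)\bfE_{\FF_p}(\bfk)\bfE_{\FF_p}(\cfk)=\bfE_{\FF_p}(\afk\ast(\bfk\ast\cfk)),
\]
where the middle product is unambiguous by associativity in the inverse limit. Injectivity of $\bfE_{\FF_p}$ then yields $(\afk\ast\bfk)\ast\cfk=\afk\ast(\bfk\ast\cfk)$. In the same way, $\bfE_{\FF_p}(\afk\ast\bfk)=\bfE_{\FF_p}(\bfk\ast\afk)$ gives commutativity. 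Finally, $1=x_\emptyset$ maps to $(1)_r$ and $1\ast\afk=\afk\ast1=\afk$ by definition, so $\Rcal$ is unital. Commutativity can also be read off from the symmetry of the defining recursion, but the transport argument already covers it.

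There is no real obstacle, since all the substantive work (linear independence and injectivity) is in Theorem~\ref{thm-Rcal-inverse-limit}. The one point to watch is that $\bfE_{\FF_p}$ is multiplicative for the possibly non-associative product $\ast$ on $\Rcal$. This holds because Theorem~\ref{thm-E-hat} gives multiplicativity on pairs of words, and bilinearity of $\ast$ extends it to arbitrary elements, so applying it to iterated products is legitimate.
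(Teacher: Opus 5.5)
Your proposal is correct and follows the paper's approach: the paper also deduces the corollary in one line from the fact that $\Rcal$ embeds, via the injective algebra homomorphism $\bfE_{\FF_p}$ of Theorem~\ref{thm-Rcal-inverse-limit}, into the commutative associative algebra $\varprojlim_{r}\Zcal^{(r)}$. You simply spell out the details the paper leaves implicit: closure of $\Zcal^{(r)}$ under multiplication, the inverse limit as a subring of the product, and transport of the identities by injectivity.
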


Recall that $(\Ecal,\ast)$ is given in Definition~\ref{df-Ecal} and $(\Rcal,\ast)$ forms an $\FF_p$-subalgebra of $(\Ecal,\ast)$.
We now aim to establish the associativity of $(\Ecal,\ast)$.
By Corollary~\ref{cor-assoc-of-R}, we can readily show that the associativity identity holds for a specific class of elements in $\Ecal$.

\begin{cor}\label{cor-R-ass} 
    For $\afk \in \Rcal$ and $\bfk,\cfk \in \Ecal$, we have $(\afk \ast \bfk) \ast \cfk = \afk \ast (\bfk \ast \cfk)$.
\end{cor}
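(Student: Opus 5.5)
The plan is to reduce everything to the associativity of $(\Rcal,\ast)$ from Corollary~\ref{cor-assoc-of-R}, using the fact that multiplication by an element of $\Rcal$ only acts on the $x$-part of a word in $\Ecal$.

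As an $\Fp$-vector space, $\Ecal$ has basis the monomials $x_{\ww{u}}y_{\ww{v}}$ with $\ww{u},\ww{v}\in\Ical$. So every element of $\Ecal$ can be written uniquely as a finite sum $\sum_{\ww{w}} s_{\ww{w}}\, y_{\ww{w}}$ with $s_{\ww{w}}\in\Rcal$; here $s\,y_{\ww{w}}$ means extending $x_{\ww{u}}\mapsto x_{\ww{u}}y_{\ww{w}}$ linearly.

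\emph{Step 1 (module lemma).} For $r\in\Rcal$, $s\in\Rcal$ and $\ww{w}\in\Ical$, I will show
\[
r\ast(s\,y_{\ww{w}})=(r\ast s)\,y_{\ww{w}}.
\]
By bilinearity it suffices to take $r=x_{\ww{c}}$ and $s=x_{\ww{u}}$. Rule (v) of Definition~\ref{df-Ecal}, with the $y$-index of the first factor empty, gives
\[
x_{\ww{c}}\ast(x_{\ww{u}}y_{\ww{w}})=(x_{\ww{c}}\ast x_{\ww{u}})\ast(1\ast y_{\ww{w}})=(x_{\ww{c}}\ast x_{\ww{u}})\ast y_{\ww{w}}.
\]
The element $x_{\ww{c}}\ast x_{\ww{u}}$ lies in $\Rcal$ and is a combination of $x$-words. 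So rule (ii), extended bilinearly, identifies $(x_{\ww{c}}\ast x_{\ww{u}})\ast y_{\ww{w}}$ with the juxtaposition $(x_{\ww{c}}\ast x_{\ww{u}})\,y_{\ww{w}}$. When $\ww{w}=\emptyset$ this is consistent with rule (i). By linearity the lemma extends to $r\ast e=\sum_{\ww{w}}(r\ast s_{\ww{w}})\,y_{\ww{w}}$ for any $e=\sum_{\ww{w}} s_{\ww{w}}y_{\ww{w}}$.

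\emph{Step 2 (general product formula).} For basis elements, rule (v) reads $(x_{\ww{u}}y_{\ww{v}})\ast(x_{\ww{u}'}y_{\ww{v}'})=(x_{\ww{u}}\ast x_{\ww{u}'})\ast(y_{\ww{v}}\ast y_{\ww{v}'})$. The first factor on the right is in $\Rcal$. Writing $y_{\ww{v}}\ast y_{\ww{v}'}=\sum_{\ww{w}}s_{\ww{w}}y_{\ww{w}}$ with $s_{\ww{w}}\in\Rcal$ (which rule (iv) produces), Step 1 gives
\[
(x_{\ww{u}}y_{\ww{v}})\ast(x_{\ww{u}'}y_{\ww{v}'})=\sum_{\ww{w}}\bigl((x_{\ww{u}}\ast x_{\ww{u}'})\ast s_{\ww{w}}\bigr)\,y_{\ww{w}}.
\]

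\emph{Step 3 (the associativity identity).} By trilinearity I may take $\afk=x_{\ww{a}}$, $\bfk=x_{\ww{u}}y_{\ww{v}}$ and $\cfk=x_{\ww{u}'}y_{\ww{v}'}$.
\begin{itemize}
\item Step 1 gives $\afk\ast\bfk=(x_{\ww{a}}\ast x_{\ww{u}})\,y_{\ww{v}}$. Since $x_{\ww{a}}\ast x_{\ww{u}}$ is a combination of $x$-words, Step 2 applied term by term gives
\[
(\afk\ast\bfk)\ast\cfk=\sum_{\ww{w}}\bigl(((x_{\ww{a}}\ast x_{\ww{u}})\ast x_{\ww{u}'})\ast s_{\ww{w}}\bigr)\,y_{\ww{w}}.
\]
\item Step 2 gives $\bfk\ast\cfk=\sum_{\ww{w}}\bigl((x_{\ww{u}}\ast x_{\ww{u}'})\ast s_{\ww{w}}\bigr)\,y_{\ww{w}}$. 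Step 1 then gives
\[
\afk\ast(\bfk\ast\cfk)=\sum_{\ww{w}}\bigl(x_{\ww{a}}\ast((x_{\ww{u}}\ast x_{\ww{u}'})\ast s_{\ww{w}})\bigr)\,y_{\ww{w}}.
\]
\end{itemize}
The two coefficients of each $y_{\ww{w}}$ are products of the same four elements of $\Rcal$, bracketed differently. By Corollary~\ref{cor-assoc-of-R}, $\Rcal$ is commutative and associative, so these coefficients agree, and hence the two sides are equal.

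The main obstacle is Step 1, together with its use in Step 2. One must check carefully that the recursive definition of $\ast$ on $\Ecal$ is unambiguous when a factor has empty $x$- or $y$-part: rules (i), (ii) and (v) must be compatible, so that the $y$-words really behave as inert labels under multiplication by elements of $\Rcal$. Once this bookkeeping is settled, Step 3 is formal.
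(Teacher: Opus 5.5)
Your proof is correct. It uses the same ingredients as the paper's proof, namely rules (ii) and (v) of Definition~\ref{df-Ecal} together with Corollary~\ref{cor-assoc-of-R}, but organizes them differently.

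The paper never introduces a normal form. It bootstraps associativity through four nested special cases, in this order:
\begin{enumerate}
\item $\afk,\bfk\in\Rcal$ and $\cfk\in\Ycal$;
\item $\afk,\bfk\in\Rcal$ and $\cfk\in\Ecal$;
\item $\afk\in\Rcal$, $\bfk\in\Ycal$ and $\cfk\in\Ecal$;
\item the general case.
\end{enumerate}
At each stage it moves parentheses using rule (v) and an earlier case.

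Your Step 1 is precisely the paper's case (i), $(x_{\ww{a}}\ast x_{\ww{b}})\ast y_{\ww{c}}=x_{\ww{a}}\ast(x_{\ww{b}}y_{\ww{c}})$. You then go further than the paper does:
\begin{itemize}
\item you promote Step 1 to the structural statement that multiplication by $\Rcal$ acts coefficientwise on the decomposition $\Ecal=\bigoplus_{\ww{w}}\Rcal\,y_{\ww{w}}$;
\item you derive an explicit product formula for two words (Step 2);
\item you compute both sides of the identity in closed form and compare coefficients of each $y_{\ww{w}}$.
\end{itemize}

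What each route buys: yours makes transparent why the result holds, since the $y$-words behave as inert labels under $\Rcal$-multiplication. It also replaces the paper's chain of case reductions by a single coefficient comparison. The paper's version avoids the auxiliary notation $s\,y_{\ww{w}}$ and works only with the defining rules. Neither route needs the commutativity of $\Rcal$, only its associativity.

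The consistency worry you raise at the end is harmless. The product on $\Ecal$ is taken as given, and the overlaps of rules (i), (ii), (v) at empty indices agree trivially, as your own Step 1 already checks.
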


\begin{proof}
    Let $\Ycal \sbe \Ecal$ be the $\Fp$-subspace spanned by $\{y_{\ww{a}} : \ww{a} \in \Ical \}$.
    We show that the associativity identity holds for the following cases.
	\begin{enumerate}
		\item $\afk,\bfk \in \Rcal$ and $\cfk \in \Ycal$.
		\item $\afk,\bfk \in \Rcal$ and $\cfk \in \Ecal$.
		\item $\afk \in \Rcal, \bfk \in \Ycal$ and $\cfk \in \Ecal$.
		\item $\afk \in \Rcal$ and $\bfk,\cfk \in \Ecal$.
	\end{enumerate}
	By distributive law on $(\mathcal{E},\ast)$, we may assume $\afk,\bfk,\cfk$ are words.
	
	(i) We have
	\[
	(x_{\ww{a}} \ast x_{\ww{b}}) \ast y_{\ww{c}}
	= x_{\ww{a}} \ast (x_{\ww{b}} y_{\ww{c}})
	= x_{\ww{a}} \ast (x_{\ww{b}} \ast y_{\ww{c}}),
	\]
    where the two equality follow from Definition~\ref{df-Ecal} (ii) and (v).
	
	(ii) We have
	\begin{align*}
		(x_{\ww{a}} \ast x_{\ww{b}}) \ast (x_{\ww{c}} y_{\ww{d}})
		&= (x_{\ww{a}} \ast x_{\ww{b}}) \ast (x_{\ww{c}} \ast y_{\ww{d}})
		\overset{(i)}{=} ((x_{\ww{a}} \ast x_{\ww{b}}) \ast x_{\ww{c}}) \ast y_{\ww{d}}  \\
		&= (x_{\ww{a}} \ast (x_{\ww{b}} \ast x_{\ww{c}})) \ast y_{\ww{d}}
		\overset{(i)}{=} x_{\ww{a}} \ast ((x_{\ww{b}} \ast x_{\ww{c}}) \ast y_{\ww{d}})
		= x_{\ww{a}} \ast (x_{\ww{b}} \ast (x_{\ww{c}} y_{\ww{d}})),
	\end{align*}
    where the first equality follows from Definition~\ref{df-Ecal} (ii), the third equality follows from Corollary~\ref{cor-assoc-of-R}, and the last equality follows from Definition~\ref{df-Ecal} (v).
	
	(iii) We have
	\[
	(x_{\ww{a}} \ast y_{\ww{b}}) \ast (x_{\ww{c}} y_{\ww{d}}) = (x_{\ww{a}}y_{\ww{b}}) * (x_{\ww{c}}y_{\ww{d}})
	= (x_{\ww{a}} \ast x_{\ww{c}}) \ast (y_{\ww{b}} \ast y_{\ww{d}})
	\overset{(ii)}{=} x_{\ww{a}} \ast (x_{\ww{c}} \ast (y_{\ww{b}} \ast y_{\ww{d}}))
	= x_{\ww{a}} \ast (y_{\ww{b}} \ast (x_{\ww{c}} y_{\ww{d}})),
	\]
    where the first equality follows from Definition~\ref{df-Ecal} (ii), and both the second equality and the last equality follow from Definition~\ref{df-Ecal} (v).
	
	(iv) We have
	\begin{align*}
		(x_{\ww{a}} \ast (x_{\ww{b}} y_{\ww{c}})) \ast (x_{\ww{d}} y_{\ww{e}})
		&= ((x_{\ww{a}} \ast x_{\ww{b}}) \ast y_{\ww{c}}) \ast (x_{\ww{d}} y_{\ww{e}})
		\overset{(iii)}{=} (x_{\ww{a}} \ast x_{\ww{b}}) \ast (y_{\ww{c}} \ast (x_{\ww{d}} y_{\ww{e}}))  \\
		&\overset{(ii)}{=} x_{\ww{a}} \ast (x_{\ww{b}} \ast (y_{\ww{c}} \ast (x_{\ww{d}} y_{\ww{e}})))  \\
		&\overset{(iii)}{=} x_{\ww{a}} \ast ((x_{\ww{b}} \ast y_{\ww{c}}) \ast (x_{\ww{d}} y_{\ww{e}}))
		= x_{\ww{a}} \ast ((x_{\ww{b}} y_{\ww{c}})\ast (x_{\ww{d}} y_{\ww{e}})),
	\end{align*}
    where the first equality follows from Definition~\ref{df-Ecal} (v) and the last equality follows from Definition~\ref{df-Ecal} (ii).
\end{proof}

We now apply Corollary~\ref{cor-R-ass} to prove that the map $\bfe:\Rcal\to \Ecal$ defined in Definition~\ref{defn-e-hat} is an $\Fp$-algebra homomorphism.
We mention that, assuming the associativity of $\Ecal$, the map $\bfe$ was already shown to be an $\Fp$-algebra homomorphism in \cite{CCHT2025}.
We first recall the following lemma.

\begin{lem}[{\cite[Lemma 3.6]{CCHT2025}}]\label{lem-(yb)*a=y(b*a)}
    For each $\afk\in \mathcal{R}$, $\bfk\in \mathcal{E}$ and $w\in\bbN$, we have 
    \[
    y_w(\bfk*\afk)=(y_w\bfk)*\afk.
    \]
\end{lem}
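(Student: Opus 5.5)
We have to prove $y_w(\bfk*\afk)=(y_w\bfk)*\afk$ for $\afk\in\Rcal$, $\bfk\in\Ecal$, $w\in\NN$. The approach is to reduce to words and then induct on depth using the recursive rules of Definition~\ref{df-Ecal}.

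\textbf{Reduction to words.} Both sides are $\FF_p$-bilinear in $(\bfk,\afk)$, so we may take $\afk=x_{\ww{a}}$ and $\bfk=x_{\ww{b}}y_{\ww{c}}$ to be words. If $\ww{a}=\emptyset$, both sides equal $y_w\bfk$ by rule (i), so assume $\ww{a}\ne\emptyset$.

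\textbf{Separating the two letters.} By rule (v),
\[
(y_wx_{\ww{b}}y_{\ww{c}})*x_{\ww{a}}=(x_{\ww{b}}y_{(w,\ww{c})})*(x_{\ww{a}}\cdot 1)=(x_{\ww{b}}*x_{\ww{a}})*y_{(w,\ww{c})}.
\]
Since $x_{\ww{b}}*x_{\ww{a}}\in\Rcal$ is a linear combination of $x$-words, rule (ii) turns the right-hand side into $(x_{\ww{b}}*x_{\ww{a}})\,y_w y_{\ww{c}}$. On the other side,
\[
\bfk*\afk=(x_{\ww{b}}y_{\ww{c}})*x_{\ww{a}}=(x_{\ww{b}}*x_{\ww{a}})\,y_{\ww{c}},
\]
again by rules (v) and (ii). Prefixing $y_w$ and using the commutation $x_ky_\ell=y_\ell x_k$ in the monoid $\Tcal$, the two sides agree.

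\textbf{Main obstacle.} The only delicate point is the case $\ww{c}=\emptyset$, where $\bfk=x_{\ww{b}}$ has no $y$-part. Rule (v) must then be applied with an empty $y$-index, and we need to check that rule (v) is consistent with rules (i)--(iii) in that degenerate case, i.e.\ that $(x_{\ww{a}}y_\emptyset)*(x_{\ww{a}'}y_{\ww{b}'})$ really reduces to $(x_{\ww{a}}*x_{\ww{a}'})*y_{\ww{b}'}$. This holds because the definitions are stated for arbitrary indices, including empty ones, and $1*\afk=\afk$ by rule (i).

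With that consistency in hand, the identity follows immediately. In particular no induction on depth is actually needed, because the $y$-letters never interact with $\afk\in\Rcal$.
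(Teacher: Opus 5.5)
Your argument is correct. After reducing to words $\afk=x_{\ww{a}}$ and $\bfk=x_{\ww{b}}y_{\ww{c}}$, rules (v) and (ii) of Definition~\ref{df-Ecal}, together with the commutation $x_ky_\ell=y_\ell x_k$ in $\Tcal$, identify both sides with $\sum_{\ww{d}}\epsilon_{\ww{d}}\,x_{\ww{d}}y_wy_{\ww{c}}$, where $x_{\ww{b}}*x_{\ww{a}}=\sum_{\ww{d}}\epsilon_{\ww{d}}x_{\ww{d}}$; this is exactly the direct computation the lemma rests on (the paper gives no proof here and only quotes it from \cite{CCHT2025}).

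One small correction to your ``main obstacle'': rule (v) with an empty $y$-index is used in every case, not only when $\ww{c}=\emptyset$, because $\afk\in\Rcal$ never has a $y$-part. Since rule (v) is stated for all indices in $\Ical$, which contains $\emptyset$, there is nothing extra to check.
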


The following lemmas are strengthened versions of \cite[Lemma 3.8]{CCHT2025} and \cite[Proposition 3.9]{CCHT2025}.
While the authors in \cite{CCHT2025} proved the two identities on $\mathcal{E}$ modulo the associator ideal, Corollary~\ref{cor-R-ass} enables us to refine the arguments to show that these identities hold in $\mathcal{E}$ itself.
This leads to Lemmas~\ref{lem-yx*yx} and~\ref{lem-ye(x)*ye(x)}.
Note that we adopt the convention introduced in Remark~\ref{rem-first-entry-convention}.

\begin{lem}[cf. {\cite[Lemma 3.8]{CCHT2025}}]  \label{lem-yx*yx}
    For all indices $\ww{a},\ww{b},\ww{v},\ww{w}\in\Ical$, we have
    \begin{multline*}
        (y_{\ww{a}}x_{\ww{v}})*(y_{\ww{b}}x_{\ww{w}})
        = y_{a_1}\left((y_{\ww{a}^{(1)}}x_{\ww{v}})*(y_{\ww{b}}x_{\ww{w}}) \right)
        + y_{b_1} \left((y_{\ww{a}}x_{\ww{v}})*(y_{\ww{b}^{(1)}}x_{\ww{w}})\right) \\
        + y_{a_1+b_1} \left((y_{\ww{a}^{(1)}}x_{\ww{v}})*(y_{\ww{b}^{(1)}}x_{\ww{w}})\right)
        + \sum_{\substack{i+j=a_1+b_1 \\ q-1 \mid j}} \Delta^{i,j}_{a_1,b_1} y_i \left(\left((y_{\ww{a}^{(1)}}x_{\ww{v}})*(y_{\ww{b}^{(1)}}x_{\ww{w}}) \right)* \bfe(x_j) \right) .
    \end{multline*}
\end{lem}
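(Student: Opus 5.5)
The plan is to reduce everything to the defining rule (iv) for $y_{\ww{a}}\ast y_{\ww{b}}$, and then move the pure-$x$ factor $x_{\ww{v}}\ast x_{\ww{w}}\in\Rcal$ through the various products. We assume $\ww{a},\ww{b}$ are non-empty, since the statement refers to their first entries $a_1,b_1$. Two ingredients do the rebracketing: Lemma~\ref{lem-(yb)*a=y(b*a)} and Corollary~\ref{cor-R-ass}. Throughout we use commutativity of $(\Ecal,\ast)$ and bilinearity of $\ast$.

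\emph{Step 1.} Put $\xi:=x_{\ww{v}}\ast x_{\ww{w}}\in\Rcal$. By Definition~\ref{df-Ecal}~(v) and commutativity, for all indices $\ww{c},\ww{d}$,
\[
(y_{\ww{c}}x_{\ww{v}})\ast(y_{\ww{d}}x_{\ww{w}})=\xi\ast(y_{\ww{c}}\ast y_{\ww{d}})=(y_{\ww{c}}\ast y_{\ww{d}})\ast\xi. \tag{$\dagger$}
\]
In particular, the left-hand side of the lemma equals $(y_{\ww{a}}\ast y_{\ww{b}})\ast\xi$.

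\emph{Step 2.} Expand $y_{\ww{a}}\ast y_{\ww{b}}$ by Definition~\ref{df-Ecal}~(iv). The two $\Delta$-sums have the same indexing set and the same coefficients. By bilinearity they combine into
\[
\sum_{\substack{i+j=a_1+b_1\\ q-1\mid j}}\Delta^{i,j}_{a_1,b_1}\,y_i\bigl((y_{\ww{a}^{(1)}}\ast y_{\ww{b}^{(1)}})\ast(x_j+y_j)\bigr).
\]
By Definition~\ref{defn-e-hat} with $\dep=1$, the $i=0$ and $i=1$ terms give $x_j+y_j=\bfe(x_j)$. So every term of $y_{\ww{a}}\ast y_{\ww{b}}$ has the form $y_k\,\bfk$ with $k\in\NN$ and $\bfk\in\Ecal$.

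\emph{Step 3.} Multiply by $\xi$ on the right. By Lemma~\ref{lem-(yb)*a=y(b*a)}, each term satisfies $(y_k\bfk)\ast\xi=y_k(\bfk\ast\xi)$.
\begin{itemize}[label={}]
\item For the first three terms, $\bfk$ is $y_{\ww{a}^{(1)}}\ast y_{\ww{b}}$, $y_{\ww{a}}\ast y_{\ww{b}^{(1)}}$ or $y_{\ww{a}^{(1)}}\ast y_{\ww{b}^{(1)}}$. Applying $(\dagger)$ turns $\bfk\ast\xi$ into exactly $(y_{\ww{a}^{(1)}}x_{\ww{v}})\ast(y_{\ww{b}}x_{\ww{w}})$, and its two analogues, as required.
\item For the $\Delta$-term, $\bfk=\cfk\ast\bfe(x_j)$ with $\cfk=y_{\ww{a}^{(1)}}\ast y_{\ww{b}^{(1)}}$. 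We need
\[
(\cfk\ast\bfe(x_j))\ast\xi=\xi\ast(\cfk\ast\bfe(x_j))=(\xi\ast\cfk)\ast\bfe(x_j),
\]
where the second equality is Corollary~\ref{cor-R-ass} applied with $\afk=\xi\in\Rcal$. Then $(\dagger)$ rewrites $\xi\ast\cfk$ as $(y_{\ww{a}^{(1)}}x_{\ww{v}})\ast(y_{\ww{b}^{(1)}}x_{\ww{w}})$.
\end{itemize}
Collecting the four types of terms gives the claimed identity.

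The only delicate point is the rebracketing in the $\Delta$-term. Associativity of $\Ecal$ is not yet available, so moving $\xi$ inside the product $\cfk\ast\bfe(x_j)$ must be justified by Corollary~\ref{cor-R-ass}. That corollary applies because $\xi$ lies in $\Rcal$, and this is exactly what upgrades the congruence modulo the associator ideal in \cite{CCHT2025} to an equality in $\Ecal$.
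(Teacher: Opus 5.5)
Your proposal is correct and follows essentially the same route as the paper. Like the paper, you rewrite the left side as $(y_{\ww{a}}\ast y_{\ww{b}})\ast(x_{\ww{v}}\ast x_{\ww{w}})$, merge the two $\Delta$-sums into $\bfe(x_j)$, pull out the $y$'s via Lemma~\ref{lem-(yb)*a=y(b*a)}, and rebracket the $\Delta$-term using Corollary~\ref{cor-R-ass}. The only cosmetic difference is that you apply Corollary~\ref{cor-R-ass} once, after moving $\xi$ to the front by commutativity, whereas the paper applies it twice with a commutativity swap in between.
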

\begin{proof}
    We first note that  $y_{\ww{a}} \ast y_{\ww{b}}$ can be written as
    \begin{multline}  \label{eq-rewrite-shuffle-of-y}
        y_{\ww{a}} * y_{\ww{b}} = y_{a_1} ( y_{\ww{a}^{(1)}} * y_{\ww{b}} ) 
        + y_{b_1} ( y_{\ww{a}} * y_{\ww{b}^{(1)}} ) 
        + y_{a_1+b_1} ( y_{\ww{a}^{(1)}} * y_{\ww{b}^{(1)}} ) \\
        + \sum_{\substack{i+j = a_1 + b_1 \\ q-1 \mid  j}} 
        \Delta^{i,j}_{a_1,b_1} y_i \left( ( y_{\ww{a}^{(1)}} * y_{\ww{b}^{(1)}} ) * \bfe(x_j) \right).
    \end{multline}
    Let $\mathfrak{a} := x_{\ww{v}}*x_{\ww{w}}$.
    Then we have
    \begin{align*}
        &(y_{\ww{a}}x_{\ww{v}})*(y_{\ww{b}}x_{\ww{w}})
        = (y_{\ww{a}}*y_{\ww{b}})*\mathfrak{a} \\
        ={} &\begin{multlined}[t]
            \left(y_{a_1}(y_{\ww{a}^{(1)}}*y_{\ww{b}})\right) * \mathfrak{a}
            + \left(y_{b_1}(y_{\ww{a}}*y_{\ww{b}^{(1)}})\right) * \mathfrak{a}
            + \left(y_{a_1+b_1}(y_{\ww{a}^{(1)}}*y_{\ww{b}^{(1)}})\right) * \mathfrak{a} \\
            + \sum_{\substack{i+j=a_1+b_1 \\ q-1\mid j}}\Delta^{i,j}_{a_1,b_1} \left(y_i \left((y_{\ww{a}^{(1)}}*y_{\ww{b}^{(1)}})*\bfe(x_j) \right) \right) * \mathfrak{a} \quad \text{(by \eqref{eq-rewrite-shuffle-of-y})}
        \end{multlined} \\
        ={} &\begin{multlined}[t]
            y_{a_1}\left((y_{\ww{a}^{(1)}}*y_{\ww{b}})*\mathfrak{a}\right)
            + y_{b_1}\left((y_{\ww{a}}*y_{\ww{a}^{(1)}})*\mathfrak{a}\right)
            + y_{a_1+b_1}\left((y_{\ww{a}^{(1)}}*y_{\ww{b}^{(1)}})*\mathfrak{a}\right) \\
            + \sum_{\substack{i+j=a_1+b_1 \\ q-1\mid j}} \Delta^{i,j}_{a_1,b_1} y_i \left(\left((y_{\ww{a}^{(1)}}*y_{\ww{b}^{(1)}})*\bfe(x_j)\right)*\mathfrak{a} \right) \quad\text{(by Lemma \ref{lem-(yb)*a=y(b*a)})}.
        \end{multlined}
    \end{align*}
    For the last term, we have
    \begin{align*}
        \left(\left(y_{\ww{a}^{(1)}}*y_{\ww{b}^{(1)}}\right)*\bfe(x_j)\right) * \mathfrak{a}
        &= (y_{\ww{a}^{(1)}}*y_{\ww{b}^{(1)}})*(\bfe(x_j)*\mathfrak{a}) \quad\text{(by Corollary \ref{cor-R-ass})} \\
        &=(y_{\ww{a}^{(1)}}*y_{\ww{b}^{(1)}})*(\mathfrak{a}*\bfe(x_j)) \\
        &= \left((y_{\ww{a}^{(1)}}*y_{\ww{b}^{(1)}})*\afk \right)*\bfe(x_j) \quad\text{(by Corollary \ref{cor-R-ass})}.
    \end{align*}
    Therefore,
    \begin{align*}
        &(y_{\ww{a}}x_{\ww{v}})*(y_{\ww{b}}x_{\ww{w}}) \\
        ={} &\begin{multlined}[t]
            y_{a_1}\left((y_{\ww{a}^{(1)}}*y_{\ww{b}})*\mathfrak{a}\right)
            + y_{b_1}\left((y_{\ww{a}}*y_{\ww{b}^{(1)}})*\mathfrak{a}\right)
            + y_{a_1+b_1}\left((y_{\ww{a}^{(1)}}*y_{\ww{b}^{(1)}})*\mathfrak{a}\right) \\
            + \sum_{\substack{i+j=a_1+b_1 \\ q-1\mid j}} \Delta^{i,j}_{a_1,b_1} y_i \left(\left((y_{\ww{a}^{(1)}}*y_{\ww{b}^{(1)}})*\afk \right)*\bfe(x_j)\right)
        \end{multlined}\\
        ={} &\begin{multlined}[t]
            y_{a_1}\left((y_{\ww{a}^{(1)}}x_{\ww{v}})*(y_{\ww{b}}x_{\ww{w}}) \right)
            + y_{b_1} \left((y_{\ww{a}}x_{\ww{v}})*(y_{\ww{b}^{(1)}}x_{\ww{w}})\right) + y_{a_1+b_1} \left((y_{\ww{a}^{(1)}}x_{\ww{v}})*(y_{\ww{b}^{(1)}}x_{\ww{w}})\right) \\
            + \sum_{\substack{i+j=a_1+b_1 \\ q-1 \mid j}} \Delta^{i,j}_{a_1,b_1} y_i \left(\left((y_{\ww{a}^{(1)}}x_{\ww{v}})*(y_{\ww{b}^{(1)}}x_{\ww{w}}) \right)* \bfe(x_j) \right).
        \end{multlined}
    \end{align*}
\end{proof}

\begin{lem}[cf. {\cite[Proposition 3.9]{CCHT2025}}]\label{lem-ye(x)*ye(x)}
    For each $a,b\in \mathbb{N}$ and indices $\ww{a},\ww{b}\in\Ical$, we have 
    \begin{multline*}
        \left(y_a\bfe(x_{\ww{a}})\right) * \left(y_b\bfe(x_{\ww{b}})\right)
        = y_a\left(\bfe(x_{\ww{a}})*\left(y_b\bfe(x_{\ww{b}})\right) \right)
        + y_b \left( \left(y_a\bfe(x_{\ww{a}}) \right) * \bfe(x_{\ww{b}}) \right)  \\
        +y_{a+b} \left(\bfe(x_{\ww{a}})*\bfe(x_{\ww{b}}) \right)
        +\sum_{\substack{i+j=a+b \\ q-1\mid j}} \Delta^{i,j}_{a,b} y_i \left( \left(\bfe(x_{\ww{a}}) * \bfe(x_{\ww{b}}) \right) * \bfe(x_j) \right).
    \end{multline*}
\end{lem}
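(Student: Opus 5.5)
We expand both $\bfe(x_{\ww{a}})$ and $\bfe(x_{\ww{b}})$ via Definition~\ref{defn-e-hat} and reduce to Lemma~\ref{lem-yx*yx}. Write
\[
y_a\bfe(x_{\ww{a}})=\sum_{i=0}^{\dep(\ww{a})} y_a y_{\ww{a}_{(i)}}x_{\ww{a}^{(i)}},
\qquad
y_b\bfe(x_{\ww{b}})=\sum_{k=0}^{\dep(\ww{b})} y_b y_{\ww{b}_{(k)}}x_{\ww{b}^{(k)}}.
\]
Each summand has the form $y_{\ww{a}'}x_{\ww{v}}$ with $\ww{a}'=(a,\ww{a}_{(i)})$ non-empty and first entry $a$; likewise each summand on the other side has first entry $b$. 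By bilinearity, the left-hand side is the double sum over $(i,k)$ of $(y_{\ww{a}'}x_{\ww{v}})*(y_{\ww{b}'}x_{\ww{w}})$. We apply Lemma~\ref{lem-yx*yx} to each such term, which produces four types of terms.

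Next we resum each type. In the first type, $\ww{a}'^{(1)}=\ww{a}_{(i)}$ with $x$-part $x_{\ww{a}^{(i)}}$. Summing over $i$ gives exactly $\bfe(x_{\ww{a}})$ in the left slot, so by bilinearity these terms collect to $y_a(\bfe(x_{\ww{a}})*(y_b\bfe(x_{\ww{b}})))$. The second type collects symmetrically to $y_b((y_a\bfe(x_{\ww{a}}))*\bfe(x_{\ww{b}}))$. The third type has both first letters stripped, giving $y_{a+b}(\bfe(x_{\ww{a}})*\bfe(x_{\ww{b}}))$. The fourth type collects to $\sum\Delta^{i,j}_{a,b}\,y_i((\bfe(x_{\ww{a}})*\bfe(x_{\ww{b}}))*\bfe(x_j))$. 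Here we use that $\Delta$ depends only on the first entries $a,b$, together with distributivity of $*$ and of left concatenation by $y_i$. This uses the linearity of the operation $\mathfrak{u}\mapsto y_c\mathfrak{u}$ on $\Ecal$, and the identity $\bfe(x_{\ww{a}})=\sum_i y_{\ww{a}_{(i)}}x_{\ww{a}^{(i)}}$ holds including the edge case $\ww{a}=\emptyset$, where $\bfe(1)=1$ and the sum is the single term $1$.

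The main point to check is that Lemma~\ref{lem-yx*yx} applies uniformly to every summand. This requires the $y$-parts to be non-empty, which holds because of the prefixed letters $y_a$ and $y_b$, and requires the first entries to be $a$ and $b$ in every summand, which is also clear. A second point is that the recursion in Lemma~\ref{lem-yx*yx} is stated with $y_{\ww{a}'}x_{\ww{v}}$ written as a commutative product. We must make sure the recollection step matches $y_a\bfe(x_{\ww{a}})$ literally, meaning $y_a$ concatenated on the left of the $y$-part, which is how $\Tcal$ encodes it.

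No associativity beyond what is already built into Lemma~\ref{lem-yx*yx}, via Corollary~\ref{cor-R-ass}, is needed. The proof is therefore a bookkeeping argument once the lemma is in hand.
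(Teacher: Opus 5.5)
Your proposal is correct and follows essentially the same route as the paper. The paper likewise expands $y_a\bfe(x_{\ww{a}})$ and $y_b\bfe(x_{\ww{b}})$ via Definition~\ref{defn-e-hat}, applies Lemma~\ref{lem-yx*yx} to each summand $(y_a y_{\ww{a}_{(k)}}x_{\ww{a}^{(k)}})*(y_b y_{\ww{b}_{(\ell)}}x_{\ww{b}^{(\ell)}})$, and regroups the four families of terms back into $\bfe(x_{\ww{a}})$ and $\bfe(x_{\ww{b}})$. (Only a cosmetic point: rename your summation index so it does not clash with the $i$ in $\Delta^{i,j}_{a,b}$.)
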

\begin{proof}
    Notice that
    \begin{align*}
        &(y_a\bfe(x_{\ww{a}}))*(y_b\bfe(x_{\ww{b}}))  \\
        ={} &\left(\sum_{k=0}^{\dep(\ww{a})}y_ay_{\ww{a}_{(k)}}x_{\ww{a}^{(k)}}\right)*\left(\sum_{\ell=0}^{\dep(\ww{b})}y_by_{\ww{b}_{(\ell)}}x_{\ww{b}^{(\ell)}}\right) \quad\text{(by Definition \ref{defn-e-hat})} \\
        ={} &\sum_{\substack{0\leq k\leq \dep(\ww{a})\\0\leq \ell\leq \dep(\ww{b})}}(y_ay_{\ww{a}_{(k)}}x_{\ww{a}^{(k)}})*(y_by_{\ww{b}_{(\ell)}}x_{\ww{b}^{(\ell)}}) \\
        ={}  &\begin{multlined}[t]
            \sum_{\substack{0\leq k\leq \dep(\ww{a})\\0\leq \ell\leq \dep(\ww{b})}}y_a\left((y_{\ww{a}_{(k)}}x_{\ww{a}^{(k)}})*(y_by_{\ww{b}_{(\ell)}}x_{\ww{b}^{(\ell)}})\right)
            +\sum_{\substack{0\leq k\leq \dep(\ww{a})\\0\leq \ell\leq \dep(\ww{b})}}y_b \left((y_ay_{\ww{a}_{(k)}}x_{\ww{a}^{(k)}})*(y_{\ww{b}_{(\ell)}}x_{\ww{b}^{(\ell)}}) \right) \\
            +\sum_{\substack{0\leq k\leq \dep(\ww{a})\\0\leq \ell\leq \dep(\ww{b})}}y_{a+b}\left((y_{\ww{a}_{(k)}}x_{\ww{a}^{(k)}})*(y_{\ww{b}_{(\ell)}} x_{\ww{b}^{(\ell)}})\right)
            \\
            +\sum_{\substack{0\leq k\leq \dep(\ww{a})\\0\leq \ell\leq \dep(\ww{b})}}\sum_{\substack{i+j=a+b \\ q-1\mid j}}\Delta^{i,j}_{a,b}y_i\left(\left((y_{\ww{a}_{(k)}}x_{\ww{a}^{(k)}})*(y_{\ww{b}_{(\ell)}}x_{\ww{b}^{(\ell)}})\right)*\bfe(x_j) \right) \quad
            \text{(by Lemma \ref{lem-yx*yx})}
        \end{multlined} \\
        ={} &\begin{multlined}[t]
            y_a \left( \bfe(x_{\ww{a}}) * (y_b\bfe(x_{\ww{b}}) )\right) 
            + y_b \left( (y_a\bfe(x_{\ww{a}}) ) * \bfe(x_{\ww{b}}) \right)
            +y_{a+b}\left(\bfe(x_{\ww{a}})*\bfe(x_{\ww{b}})\right) \\ +\sum_{\substack{i+j=a+b \\ q-1\mid j}}\Delta^{i,j}_{a,b}y_i \left( \left( \bfe(x_{\ww{a}}) * \bfe(x_{\ww{b}}) \right) * \bfe(x_j) \right) \quad\text{(by Definition \ref{defn-e-hat})}.
        \end{multlined}
    \end{align*}
\end{proof}

With the necessary identities in hand, we are now ready to prove the following proposition using the argument in \cite[Proposition~3.7]{CCHT2025}.

\begin{prop}[cf. {\cite[Proposition 3.7]{CCHT2025}}] \label{prop-bfe-is-alg-hom}
    The map $\bfe:\mathcal{R}\to \mathcal{E}$ is an $\mathbb{F}_p$-algebra homomorphism.
\end{prop}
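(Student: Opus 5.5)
We show $\bfe(x_{\ww{a}}\ast x_{\ww{b}})=\bfe(x_{\ww{a}})\ast\bfe(x_{\ww{b}})$ for all indices $\ww{a},\ww{b}\in\Ical$, by induction on $\dep(\ww{a})+\dep(\ww{b})$; linearity then handles general elements of $\Rcal$. If either index is empty the identity is immediate from $\bfe(1)=1$. For non-empty $\ww{a},\ww{b}$ we use the recursive description
\[
\bfe(x_{\ww{a}})=x_{\ww{a}}+y_{a_1}\bfe(x_{\ww{a}^{(1)}}),
\]
which follows directly from Definition~\ref{defn-e-hat} by separating the $i=0$ term, together with $x_{\ww{a}}y_{\ww{c}}=x_{\ww{a}}\ast y_{\ww{c}}$. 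Distributivity then splits $\bfe(x_{\ww{a}})\ast\bfe(x_{\ww{b}})$ into four pieces:
\[
x_{\ww{a}}\ast x_{\ww{b}}+x_{\ww{a}}\ast\bigl(y_{b_1}\bfe(x_{\ww{b}^{(1)}})\bigr)+\bigl(y_{a_1}\bfe(x_{\ww{a}^{(1)}})\bigr)\ast x_{\ww{b}}+\bigl(y_{a_1}\bfe(x_{\ww{a}^{(1)}})\bigr)\ast\bigl(y_{b_1}\bfe(x_{\ww{b}^{(1)}})\bigr).
\]

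On the other side, apply $\bfe$ to the recursive formula of Definition~\ref{df-zeta-values-as-words}, using $\bfe(x_c\,\mathfrak{u})=x_c\mathfrak{u}+y_c\,\bfe(\mathfrak{u})$ for $\mathfrak{u}\in\Rcal$ (again from the recursive description). The $x$-parts reassemble exactly to $x_{\ww{a}}\ast x_{\ww{b}}$. The $y$-parts are
\[
y_{a_1}\bfe(x_{\ww{a}^{(1)}}\ast x_{\ww{b}})+y_{b_1}\bfe(x_{\ww{a}}\ast x_{\ww{b}^{(1)}})+y_{a_1+b_1}\bfe(x_{\ww{a}^{(1)}}\ast x_{\ww{b}^{(1)}})+\sum\Delta^{i,j}_{a_1,b_1}y_i\,\bfe\bigl((x_{\ww{a}^{(1)}}\ast x_{\ww{b}^{(1)}})\ast x_j\bigr).
\]
By the induction hypothesis, each of these products of smaller total depth becomes a product of $\bfe$'s. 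For the last term this needs two applications: first to the inner product, and then, using that $x_{\ww{a}^{(1)}}\ast x_{\ww{b}^{(1)}}$ is a combination of words of depth at most $\dep(\ww{a})+\dep(\ww{b})-2$, to its product with $x_j$. This produces $y_i\bigl((\bfe(x_{\ww{a}^{(1)}})\ast\bfe(x_{\ww{b}^{(1)}}))\ast\bfe(x_j)\bigr)$.

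It remains to match the two sides. The fourth piece is handled by Lemma~\ref{lem-ye(x)*ye(x)} with $a=a_1$, $b=b_1$. It produces $y_{a_1+b_1}(\cdots)$ and the $\Delta$-sum, which match directly. It also produces
\[
y_{a_1}\bigl(\bfe(x_{\ww{a}^{(1)}})\ast(y_{b_1}\bfe(x_{\ww{b}^{(1)}}))\bigr)\quad\text{and}\quad y_{b_1}\bigl((y_{a_1}\bfe(x_{\ww{a}^{(1)}}))\ast\bfe(x_{\ww{b}^{(1)}})\bigr).
\]
The second and third pieces are handled by Lemma~\ref{lem-(yb)*a=y(b*a)} and commutativity:
\[
x_{\ww{a}}\ast\bigl(y_{b_1}\bfe(x_{\ww{b}^{(1)}})\bigr)=y_{b_1}\bigl(\bfe(x_{\ww{b}^{(1)}})\ast x_{\ww{a}}\bigr),
\]
and symmetrically for the third piece. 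Collecting the $y_{a_1}$-terms gives
\[
y_{a_1}\Bigl(\bfe(x_{\ww{a}^{(1)}})\ast\bigl(x_{\ww{b}}+y_{b_1}\bfe(x_{\ww{b}^{(1)}})\bigr)\Bigr)=y_{a_1}\bigl(\bfe(x_{\ww{a}^{(1)}})\ast\bfe(x_{\ww{b}})\bigr)=y_{a_1}\bfe(x_{\ww{a}^{(1)}}\ast x_{\ww{b}}),
\]
the last step by induction. The $y_{b_1}$-terms are treated the same way. This completes the induction step.

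The main obstacle is the bookkeeping that makes the induction legitimate. One must check that every product to which the hypothesis is applied has strictly smaller total depth. The delicate cases are the nested term $(x_{\ww{a}^{(1)}}\ast x_{\ww{b}^{(1)}})\ast x_j$ and the inputs to Lemma~\ref{lem-ye(x)*ye(x)}, whose proof relies on Lemma~\ref{lem-yx*yx} and hence on Corollary~\ref{cor-R-ass}. Those lemmas already hold in $\Ecal$ itself rather than modulo associators, so no associativity of $\Ecal$ is needed. The only reassociations used are of the type covered by Corollary~\ref{cor-R-ass}, with an element of $\Rcal$ in the first slot.
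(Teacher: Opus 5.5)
Your proposal is correct and follows the paper's proof essentially step for step. You use the same ingredients: induction on total depth via $\bfe(x_{\ww{a}})=x_{\ww{a}}+y_{a_1}\bfe(x_{\ww{a}^{(1)}})$, the four-piece expansion of $\bfe(x_{\ww{a}})\ast\bfe(x_{\ww{b}})$, the induction hypothesis on the $y$-parts (including the double application to $(x_{\ww{a}^{(1)}}\ast x_{\ww{b}^{(1)}})\ast x_j$), Lemma~\ref{lem-(yb)*a=y(b*a)} with commutativity for the mixed pieces, and Lemma~\ref{lem-ye(x)*ye(x)} for the $y$--$y$ piece. The only cosmetic difference is that the paper checks the depth-$(1,1)$ case separately, whereas your uniform step, with the empty-index case as base, already covers it.
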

\begin{proof}
    First, notice that for all $a\in \mathbb{N}$ and non-empty index $\ww{a}\in\Ical$, we have
    \begin{equation} \label{eq-bfe-alg-hom-observation}
        \bfe(x_ax_{\ww{a}})=x_ax_{\ww{a}}+y_ax_{\ww{a}}+\sum_{i=1}^{\dep(\ww{a})}y_ax_{\ww{a}^{(i)}}y_{\ww{a}_{(i)}}=x_ax_{\ww{a}}+y_a\bfe(x_{\ww{a}}).
    \end{equation}
    We now claim that
    \[
    \bfe(x_{\ww{a}})*\bfe(x_{\ww{b}}) 
    = \bfe(x_{\ww{a}}*x_{\ww{b}})
    \]
    for any indices $\ww{a},\ww{b} \in \Ical$.
    If either $\dep(\ww{a})$ or $\dep(\ww{b})$ is zero, then $\bfe(x_{\ww{a}})=1$ or $\bfe(x_{\ww{b}})=1$, and the result is clear.
    Assuming both $\ww{a}$ and $\ww{b}$ are non-empty, we proceed by induction on the total depth $\dep(\ww{a})+\dep(\ww{b})$.
    For the base case $\dep({\ww{a}}) = \dep(\ww{b}) = 1$, we have 

    \begin{align*}
        \bfe(x_a*x_b)
        &=\begin{multlined}[t]
            x_a x_b+x_b x_a+x_{a+b}+\sum_{\substack{i+j=a+b \\ q-1\mid j}}\Delta^{i,j}_{a,b}x_ix_j+y_ax_b+y_bx_a+y_{a+b} \\
            +\sum_{\substack{i+j=a+b \\ q-1\mid j}}\Delta^{i,j}_{a,b}y_ix_j+y_ay_b+y_by_a+\sum_{\substack{i+j=a+b \\ q-1\mid j}}\Delta^{i,j}_{a,b}y_iy_j \\
        \end{multlined} \\
        &=(x_a*x_b)+(y_b*x_a)+(x_a*y_b)+(y_a*y_b) \\
        &=(x_a+y_a)*(x_b+y_b)  \\
        &=\bfe(x_a)*\bfe(x_b).
    \end{align*}
    
    Given $N>2$, suppose the claim holds for any indices with total depth less than $N$.
    Let $\ww{a},\ww{b}\in\Ical$ be non-empty indices with $\dep(\ww{a})+\dep(\ww{b})=N$.
    Then
    \begin{align}
        &\bfe(x_{\ww{a}}) * \bfe(x_{\ww{b}}) \notag \\
        ={} &\left( x_{\ww{a}}+y_{a_1} \bfe(x_{\ww{a}^{(1)}}) \right) * \left(x_{\ww{b}}+y_{b_1}\bfe(x_{\ww{b}^{(1)}}) \right) \quad \text{(by \eqref{eq-bfe-alg-hom-observation})} \notag \\
        ={} &x_{\ww{a}}*x_{\ww{b}}
        +\left( y_{a_1}\bfe(x_{\ww{a}^{(1)}}) \right) * x_{\ww{b}}
        +\left(y_{\ww{b}_1}\bfe(x_{\ww{b}^{(1)}})\right)*x_{\ww{a}}
        +\left( y_{a_1}\bfe(x_{\ww{a}^{(1)}}) \right) * \left(y_{b_1} \bfe(x_{\ww{b}^{(1)}}) \right). \label{eq-e-shu-e}
    \end{align}
    On the other hand,
    \begin{align}
        &\bfe(x_{\ww{a}} * x_{\ww{b}}) \notag \\
        ={} &\begin{multlined}[t]
            \bfe\left( x_{a_1}(x_{\ww{a}^{(1)}} * x_{\ww{b}}) \right)+\bfe\left( x_{b_1}(x_{\ww{a}}*x_{\ww{b}^{(1)}}) \right)
            +\bfe\left( x_{a_1+b_1}(x_{\ww{a}^{(1)}}*x_{\ww{b}^{(1)}}) \right) \\
            +\sum_{\substack{i+j=a_1+b_1 \\ q-1\mid j}}\Delta^{i,j}_{a_1,b_1}\bfe\left( x_i\left( (x_{\ww{a}^{(1)}}*x_{\ww{b}^{(1)}})*x_j \right) \right)
        \end{multlined} \notag \\
        ={} &\begin{multlined}[t]
            x_{a_1}(x_{\ww{a}^{(1)}}*x_{\ww{b}})+x_{b_1}(x_{\ww{a}}*x_{\ww{b}^{(1)}})+x_{a_1+b_1}(x_{\ww{a}^{(1)}}*x_{\ww{b}^{(1)}}) \\
            +\sum_{\substack{i+j=a_1+b_1 \\ q-1\mid j}}\Delta^{i,j}_{a_1,b_1}x_i\left( (x_{\ww{a}^{(1)}}*x_{\ww{b}^{(1)}})*x_j \right) \\ +y_{a_1}\bfe(x_{\ww{a}^{(1)}}*x_{\ww{b}})+y_{b_1}\bfe(x_{\ww{a}}*x_{\ww{b}^{(1)}})+y_{a_1+b_1}\bfe(x_{\ww{a}^{(1)}}*x_{\ww{b}^{(1)}}) \\
            +\sum_{\substack{i+j=a_1+b_1 \\ q-1\mid j}}\Delta^{i,j}_{a_1,b_1}y_i\bfe\left( (x_{\ww{a}^{(1)}}*x_{\ww{b}^{(1)}})*x_j \right)
            \quad\text{(by \eqref{eq-bfe-alg-hom-observation})}
        \end{multlined} \notag \\
        ={} &\begin{multlined}[t]
            x_{\ww{a}}*x_{\ww{b}} +y_{a_1}\left(\bfe(x_{\ww{a}^{(1)}})*\bfe(x_{\ww{b}})\right)+y_{b_1}\left(\bfe(x_{\ww{a}})*\bfe(x_{\ww{b}^{(1)}})\right)
            + y_{a_1+b_1}\left(\bfe(x_{\ww{a}^{(1)}})*\bfe(x_{\ww{b}^{(1)}})\right) \\
            +\sum_{\substack{i+j=a_1+b_1 \\ q-1\mid j}}\Delta^{i,j}_{a_1,b_1}y_i\left( \left( \bfe(x_{\ww{a}^{(1)}})*\bfe(x_{\ww{b}^{(1)}})\right) *\bfe(x_j) \right) \\
            \text{(by induction hypothesis)}
        \end{multlined} \notag \\
        ={} &\begin{multlined}[t]
            x_{\ww{a}}*x_{\ww{b}}
            +y_{a_1}\left( \bfe(x_{\ww{a}^{(1)}})* \left( x_{\ww{b}}+y_{b_1} \bfe(x_{\ww{b}^{(1)}}) \right) \right)
            +y_{b_1} \left(\bfe(x_{\ww{b}^{(1)}})*\left(x_{\ww{a}}+y_{a_1}\bfe(x_{\ww{a}^{(1)}}) \right)\right) \\
            +y_{a_1+b_1}\bfe(x_{\ww{a}^{(1)}}*x_{\ww{b}^{(1)}})
            +\sum_{\substack{i+j=a_1+b_1 \\ q-1\mid j}}\Delta^{i,j}_{a_1,b_1} y_i\left( \left( \bfe(x_{\ww{a}^{(1)}})*\bfe(x_{\ww{b}^{(1)}})\right) *\bfe(x_j) \right)
            \quad\text{(by \eqref{eq-bfe-alg-hom-observation})}
        \end{multlined} \notag \\
        ={} &\begin{multlined}[t]
            x_{\ww{a}}*x_{\ww{b}}
            +y_{a_1}\left( \bfe(x_{\ww{a}^{(1)}})* x_{\ww{b}}\right)+y_{a_1}\left(\bfe(x_{\ww{a}^{(1)}})*\left(y_{b_1} \bfe(x_{\ww{b}^{(1)}})\right)\right) 
            \\+y_{b_1} \left(\bfe(x_{\ww{b}^{(1)}})*x_{\ww{a}}\right)+y_{b_1}\left(\bfe(x_{\ww{b}^{(1)}})*\left(y_{a_1}\bfe(x_{\ww{a}^{(1)}})\right) \right)
            +y_{a_1+b_1}\bfe(x_{\ww{a}^{(1)}}*x_{\ww{b}^{(1)}}) \\
            +\sum_{\substack{i+j=a_1+b_1 \\ q-1\mid j}}\Delta^{i,j}_{a_1,b_1} y_i\left( \left( \bfe(x_{\ww{a}^{(1)}})*\bfe(x_{\ww{b}^{(1)}})\right) *\bfe(x_j) \right)
        \end{multlined} \notag \\
        ={} &\begin{multlined}[t]
            x_{\ww{a}}*x_{\ww{b}}
            +\left( y_{a_1} \bfe(x_{\ww{a}^{(1)}})\right) * x_{\ww{b}}
            +y_{a_1}\left(\bfe(x_{\ww{a}^{(1)}}) * \left( y_{b_1}\bfe(x_{\ww{b}^{(1)}})\right)\right) \\
            +\left(y_{b_1}\bfe(x_{\ww{b}^{(1)}})\right) * x_{\ww{a}}
            +y_{b_1} \left( \left( y_{a_1}\bfe(x_{\ww{a}^{(1)}}) \right) * \bfe(x_{\ww{b}^{(1)}}) \right)
            +y_{a_1+b_1}\bfe(x_{\ww{a}^{(1)}}*x_{\ww{b}^{(1)}}) \\
            +\sum_{\substack{i+j=a_1+b_1 \\ q-1\mid j}}\Delta^{i,j}_{a_1,b_1} y_i\left( \left( \bfe(x_{\ww{a}^{(1)}})*\bfe(x_{\ww{b}^{(1)}})\right) *\bfe(x_j) \right)
            \quad\text{(by Lemma \ref{lem-(yb)*a=y(b*a)})}.
        \end{multlined}\label{eq-e-x-shu-x}
    \end{align}
    Comparing the terms in \eqref{eq-e-shu-e} and \eqref{eq-e-x-shu-x}, we see that the identity
    \[
    \bfe(x_{\ww{a}} * x_{\ww{b}}) =\bfe(x_{\ww{a}}) * \bfe(x_{\ww{b}})
    \]
    is equivalent to
    \begin{align*}
        &\left( y_{a_1}\bfe(x_{\ww{a}^{(1)}})\right) * \left(y_{b_1}\bfe(x_{\ww{b}^{(1)}})\right) \\
        ={} &\begin{multlined}[t]
            y_{a_1}\left(\bfe(x_{\ww{a}^{(1)}}) * \left( y_{b_1}\bfe(x_{\ww{b}^{(1)}})\right)\right)
            +y_{b_1} \left( \left( y_{a_1}\bfe(x_{\ww{a}^{(1)}}) \right) * \bfe(x_{\ww{b}^{(1)}}) \right) \\
            +y_{a_1+b_1}\bfe(x_{\ww{a}^{(1)}}*x_{\ww{b}^{(1)}})
            +\sum_{\substack{i+j=a_1+b_1 \\ q-1\mid j}}\Delta^{i,j}_{a_1,b_1} y_i\left( \left( \bfe(x_{\ww{a}^{(1)}})*\bfe(x_{\ww{b}^{(1)}})\right) *\bfe(x_j) \right),
        \end{multlined}
    \end{align*}
    which follows from Lemma \ref{lem-ye(x)*ye(x)}. 
\end{proof}

Armed with Proposition~\ref{prop-bfe-is-alg-hom}, we proceed to prove the associativity of $\Ecal$ by exploiting its $\Rcal$-algebra structure.
For this purpose, we equip $\Ecal$ with an $\Rcal$-algebra structure via the natural inclusion map $\iota:\Rcal\to \Ecal$, and construct an $\Rcal$-basis of $\Ecal$ in the following lemma.

\begin{lem}\label{lem-bfe-basis}
    The set 
    \[
    \{\bfe(x_{\ww{a}}):\ww{a}\in\Ical\}
    \]
    forms an $\Rcal$-basis of $\Ecal$.
\end{lem}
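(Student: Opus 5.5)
We need to show that every element of $\Ecal$ can be written uniquely as $\sum_{\ww{a}} \mathfrak{r}_{\ww{a}} \ast \bfe(x_{\ww{a}})$ with $\mathfrak{r}_{\ww{a}}\in\Rcal$. Here the $\Rcal$-module structure on $\Ecal$ is $\mathfrak{r}\cdot\bfk := \mathfrak{r}\ast\bfk$; it is a genuine module structure by Corollary~\ref{cor-R-ass} together with the associativity of $\Rcal$ (Corollary~\ref{cor-assoc-of-R}). As an $\FF_p$-vector space, $\Ecal$ has basis $\{x_{\ww{v}}y_{\ww{b}}\}$. By Definition~\ref{df-Ecal}~(ii), $x_{\ww{v}}\ast y_{\ww{b}}=x_{\ww{v}}y_{\ww{b}}$, so $\Ecal$ is a free $\Rcal$-module with basis $\{y_{\ww{b}}:\ww{b}\in\Ical\}$. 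Indeed, by (v) and (ii),
\[
\mathfrak{r}\ast y_{\ww{b}}=\sum_{\ww{v}} c_{\ww{v}}\, x_{\ww{v}}y_{\ww{b}}
\quad\text{for}\quad \mathfrak{r}=\sum_{\ww{v}} c_{\ww{v}}x_{\ww{v}},
\]
so $\bigoplus_{\ww{b}}\Rcal\ast y_{\ww{b}}=\Ecal$, the sum being direct because the monomials $x_{\ww{v}}y_{\ww{b}}$ are linearly independent. The plan is to compare $\{\bfe(x_{\ww{a}})\}$ with this basis via a unitriangular change of basis.

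The key formula is Definition~\ref{defn-e-hat}:
\[
\bfe(x_{\ww{a}})=\sum_{i=0}^{\dep(\ww{a})} x_{\ww{a}^{(i)}}y_{\ww{a}_{(i)}}=\sum_{i=0}^{\dep(\ww{a})} x_{\ww{a}^{(i)}}\ast y_{\ww{a}_{(i)}}.
\]
Thus
\[
\bfe(x_{\ww{a}})=y_{\ww{a}}+\sum_{i<\dep(\ww{a})} x_{\ww{a}^{(i)}}\ast y_{\ww{a}_{(i)}},
\]
where the $i=\dep(\ww{a})$ term gives $y_{\ww{a}}$ with coefficient $1\in\Rcal$, and every other term has $y$-part $y_{\ww{a}_{(i)}}$ of strictly smaller depth, with coefficient $x_{\ww{a}^{(i)}}\in\Rcal$. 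Order the indices by depth. The transition matrix from $\{y_{\ww{b}}\}$ to $\{\bfe(x_{\ww{a}})\}$ is then unitriangular over $\Rcal$, with each column finitely supported.

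For spanning, we show by induction on $\dep(\ww{b})$ that each $y_{\ww{b}}$ lies in the $\Rcal$-span of the $\bfe(x_{\ww{a}})$. This follows from
\[
y_{\ww{b}}=\bfe(x_{\ww{b}})-\sum_{i<\dep(\ww{b})} x_{\ww{b}^{(i)}}\ast y_{\ww{b}_{(i)}},
\]
applying the induction hypothesis to each $y_{\ww{b}_{(i)}}$ and using the module associativity $\mathfrak{r}\ast(\mathfrak{s}\ast\bfk)=(\mathfrak{r}\ast\mathfrak{s})\ast\bfk$ from Corollary~\ref{cor-R-ass}. Since the $y_{\ww{b}}$ span $\Ecal$ over $\Rcal$, so do the $\bfe(x_{\ww{a}})$.

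For independence, suppose $\sum_{\ww{a}} \mathfrak{r}_{\ww{a}}\ast\bfe(x_{\ww{a}})=0$ with finitely many nonzero $\mathfrak{r}_{\ww{a}}$. Let $d$ be the maximal depth with some $\mathfrak{r}_{\ww{a}}\neq0$. Expanding each $\mathfrak{r}_{\ww{a}}\ast\bfe(x_{\ww{a}})$ in the $y$-basis using Corollary~\ref{cor-R-ass}, the component along $y_{\ww{b}}$ with $\dep(\ww{b})=d$ receives a contribution only from $\ww{a}=\ww{b}$: an index $\ww{a}$ contributes to $y_{\ww{b}}$ only if $\ww{b}=\ww{a}_{(i)}$ for some $i$, which forces $\dep(\ww{a})\ge d$ and hence, by maximality, $\ww{a}=\ww{b}$. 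That component is therefore $\mathfrak{r}_{\ww{b}}$ itself, and freeness of the $y$-basis gives $\mathfrak{r}_{\ww{b}}=0$, contradicting the choice of $d$. Hence all $\mathfrak{r}_{\ww{a}}=0$.

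The only delicate point is justifying that $\ast$ with $\Rcal$ gives an honest module structure, so that the triangular manipulations are legitimate before the associativity of $\Ecal$ is known. This is exactly what Corollary~\ref{cor-R-ass} provides, together with the explicit description of $\mathfrak{r}\ast y_{\ww{b}}$ via (ii) and (v).
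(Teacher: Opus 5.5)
Your proof is correct and is essentially the paper's argument. Spanning goes by induction on depth using $\bfe(x_{\ww{a}})=y_{\ww{a}}+\sum_{i<\dep(\ww{a})}x_{\ww{a}^{(i)}}\ast y_{\ww{a}_{(i)}}$, and independence comes from extracting the top-depth $y$-components and using the $\Rcal$-freeness of $\{y_{\ww{b}}\}$; your maximal-depth argument is the paper's induction on $N$, and your explicit appeal to Corollary~\ref{cor-R-ass} for the $\Rcal$-module manipulations (left implicit in the paper) is the right justification.
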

\begin{proof}
    We first claim that $\{\bfe(x_{\ww{a}}): \ww{a}\in \Ical\}$ spans $\Ecal$ over $\Rcal$.
    It suffices to check for each $\ww{a}\in\Ical$, 
    \[
    y_{\ww{a}}\in\Span_{\Rcal}\{\bfe(x_{\ww{b}}):\ww{b}\in\Ical\}
    \]
    and we proceed by induction on $\dep(\ww{a})$.
    When $\dep(\ww{a})=0$, we have $\ww{a}=\varnothing$ and $\bfe(x_\varnothing)=1=y_{\varnothing}$.
    Given $N\geq 1$, assume that the result holds for all index $\ww{a}\in \Ical$ with $\dep(\ww{a})<N$.
    When $\dep(\ww{a})=N$, we note that by Definition \ref{defn-e-hat}, 
    \[
    \bfe(x_{\ww{a}})=y_{\ww{a}}+\sum_{i=0}^{N-1}x_{\ww{a}^{(i)}}\ast y_{\ww{a}_{(i)}},
    \]
    and that by induction hypothesis, 
    \[
    \sum_{i=0}^{N-1}x_{\ww{a}^{(i)}}\ast y_{\ww{a}_{(i)}}\in\Span_{\Rcal}\{\bfe(x_{\ww{b}}):\ww{b}\in\Ical\}.
    \]
    It follows immediately that $y_{\ww{a}}\in\Span_{\Rcal}\{\bfe(x_{\ww{b}}):\ww{b}\in\Ical\}$. 

    Now, we prove the $\Rcal$-linear independence of $\{\bfe(x_{\ww{a}}):\ww{a}\in\Ical\}$.
    It suffices to show that
    \[
    \{\bfe(x_{\ww{b}}) : \ww{b}\in \Ical, \dep(\ww{b})\le N\}
    \]
    is $\Rcal$-linearly independent for all $N\in \NN$, and we proceed by induction on $N$.
    For the base case $N = 0$, we have
    \[
    \{\bfe(x_{\ww{b}}) : \ww{b}\in \Ical, \dep(\ww{b})\le N\} = \{1\},
    \]
    which is clearly $\Rcal$-linearly independent.
    Given $N\ge 1$, assume that the result holds for $\dep(\ww{b})\le N-1$ and suppose
    \begin{equation}\label{eq-ehat-lin-indep}
        \sum_{\substack{\ww{b}\in \Ical \\ \dep(\ww{b})\le N}} \afk_{\ww{b}} * \bfe(x_{\ww{b}}) = 0,
    \end{equation}
    where $\afk_{\ww{b}}\in \Rcal$ and $\afk_{\ww{b}} = 0$ for all but finitely many $\ww{b}$.
    By Definition \ref{defn-e-hat}, we notice that
    \[
    \sum_{\substack{\ww{b}\in \Ical \\ \dep(\ww{b})\le N}} \afk_{\ww{b}} * \left(y_{\ww{b}} + \sum_{i=0}^{\dep(\ww{b})-1} x_{\ww{b}^{(i)}}*y_{\ww{b}_{(i)}}\right) = \sum_{\substack{\ww{b}\in \Ical \\ \dep(\ww{b})= N}} \afk_{\ww{b}} * y_{\ww{b}} + \sum_{\substack{\ww{c}\in\Ical\\ \dep(\ww{c})<N}}\bfk_{\ww{c}}\ast y_{\ww{c}}=0
    \]
    for some $\bfk_{\ww{c}}\in \Rcal$.
    Therefore, by $\Rcal$-linearly independence of $\{y_{\ww{a}} : \ww{a}\in \Ical\}$, we have $\afk_{\ww{b}} = 0$ for all $\ww{b}\in \Ical$ with $\dep(\ww{b}) = N$.
    Hence, \eqref{eq-ehat-lin-indep} becomes
    \[
    \sum_{\substack{\ww{b}\in \Ical \\ \dep(\ww{b})\le N-1}} \afk_{\ww{b}} * \bfe(x_{\ww{b}}) = 0.
    \]
    By induction hypothesis, $\afk_{\ww{b}} = 0$ for all $\ww{b} \in \Ical$ with $\dep(\ww{b})\le N-1$.
    This completes the proof.
\end{proof}

\begin{thm}\label{thm-assoc-of-E}
    Let $\phi:\mathcal{R}\otimes_{\mathbb{F}_p} \Rcal\to \Ecal$ to be the unique $\bbF_p$-linear map such that
    \[
    \phi(x_{\ww{a}}\otimes x_{\ww{b}})=x_{\ww{a}} * \bfe(x_{\ww{b}}).
    \]
    Then $\phi$ is an $\mathbb{F}_p$-algebra isomorphism.
    In particular, $(\mathcal{E},\ast)$ is an commutative associative $\mathbb{F}_p$-algebra.
\end{thm}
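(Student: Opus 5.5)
Bijectivity of $\phi$ is Lemma~\ref{lem-bfe-basis}. Since $\{x_{\ww{a}}\otimes x_{\ww{b}}\}$ is an $\Fp$-basis of $\Rcal\otimes_{\Fp}\Rcal$ and $\{x_{\ww{a}}\}$ is an $\Fp$-basis of $\Rcal$, the set $\{x_{\ww{a}}\ast\bfe(x_{\ww{b}})\}$ is an $\Fp$-basis of $\Ecal$. Here we use that $\Ecal=\bigoplus_{\ww{b}}\Rcal\ast\bfe(x_{\ww{b}})$ and that, for each fixed $\ww{b}$, the map $\afk\mapsto\afk\ast\bfe(x_{\ww{b}})$ is injective, both of which are part of the $\Rcal$-basis statement. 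So $\phi$ is an $\Fp$-linear bijection, and it remains to show multiplicativity.

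The product on $\Rcal\otimes\Rcal$ is $(\afk\otimes\bfk)(\afk'\otimes\bfk')=(\afk\ast\afk')\otimes(\bfk\ast\bfk')$. We need
\[
(x_{\ww{a}}\ast\bfe(x_{\ww{b}}))\ast(x_{\ww{a}'}\ast\bfe(x_{\ww{b}'}))=(x_{\ww{a}}\ast x_{\ww{a}'})\ast\bfe(x_{\ww{b}}\ast x_{\ww{b}'}).
\]
By Proposition~\ref{prop-bfe-is-alg-hom}, the right side equals $(x_{\ww{a}}\ast x_{\ww{a}'})\ast(\bfe(x_{\ww{b}})\ast\bfe(x_{\ww{b}'}))$. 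The left side is rearranged using only Corollary~\ref{cor-R-ass} (associativity whenever the first factor lies in $\Rcal$) and commutativity of $\ast$ on $\Ecal$. Write $\afk=x_{\ww{a}}$, $\afk'=x_{\ww{a}'}$, $\bfk=\bfe(x_{\ww{b}})$ and $\bfk'=\bfe(x_{\ww{b}'})$. Then
\[
(\afk\ast\bfk)\ast(\afk'\ast\bfk')=\afk\ast(\bfk\ast(\afk'\ast\bfk'))
\]
by Corollary~\ref{cor-R-ass} with first factor $\afk$. Next,
\[
\bfk\ast(\afk'\ast\bfk')=(\afk'\ast\bfk')\ast\bfk=\afk'\ast(\bfk'\ast\bfk)
\]
by commutativity and then Corollary~\ref{cor-R-ass}. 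Hence the left side is $\afk\ast(\afk'\ast(\bfk\ast\bfk'))$, which equals $(\afk\ast\afk')\ast(\bfk\ast\bfk')$ by Corollary~\ref{cor-R-ass} once more. This matches the right side, so $\phi$ is an $\Fp$-algebra homomorphism and therefore an isomorphism. The unit is preserved since $\phi(1\otimes1)=1$.

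Finally, $\Rcal\otimes_{\Fp}\Rcal$ is commutative and associative, because $\Rcal$ is by Corollary~\ref{cor-assoc-of-R}. Since $\phi$ is a multiplicative bijection, $(\Ecal,\ast)$ inherits commutativity and associativity. The only delicate point is making sure every regrouping above uses an instance of Corollary~\ref{cor-R-ass} whose first factor genuinely lies in $\Rcal$; the chain of equalities above is arranged so that this holds at each step. The substantive input is Proposition~\ref{prop-bfe-is-alg-hom} together with Lemma~\ref{lem-bfe-basis}.
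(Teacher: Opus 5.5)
Your proof is correct and follows essentially the same route as the paper: bijectivity comes from the $\Rcal$-basis in Lemma~\ref{lem-bfe-basis}, and multiplicativity from Proposition~\ref{prop-bfe-is-alg-hom} together with Corollary~\ref{cor-R-ass} and commutativity of $\ast$. Your regrouping chain differs from the paper's only in the order of steps, and it has the small advantage of showing explicitly that each application of Corollary~\ref{cor-R-ass} has its first factor in $\Rcal$.
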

\begin{proof}
    We first claim that $\mathscr{B}=\{x_{\ww{a}} * \bfe(x_{\ww{b}}):\ww{a},\ww{b}\in \Ical\}$ forms an $\mathbb{F}_p$-basis of $\mathcal{E}$.
    By Lemma \ref{lem-bfe-basis}, we know that $\{\bfe(x_{\ww{a}}):\ww{a}\in \mathcal{I}\}$ forms an $\mathcal{R}$-basis of $\mathcal{E}$. 
    Thus, $\mathscr{B}$ spans $\mathcal{E}$ over $\mathbb{F}_p$ since $\{x_{\ww{a}}:\ww{a}\in \Ical\}$ spans $\mathcal{R}$ over $\mathbb{F}_p$ and $\{\bfe(x_{\ww{b}}):\ww{b}\in \Ical\}$ spans $\mathcal{E}$ over $\Rcal$.
    To prove the $\FF_p$-linear independence of $\mathscr{B}$, we assume that 
    \[
    \sum_{\ww{a},\ww{b}\in\Ical}\epsilon_{\ww{a},\ww{b}}(x_{\ww{a}} * \bfe(x_{\ww{b}}))=0,
    \]
    where $\epsilon_{\ww{a},\ww{b}}\in \mathbb{F}_p$ and  $\epsilon_{\ww{a},\ww{b}}=0$ for all but finitely many $(\ww{a},\ww{b})\in\Ical^2$.
    Then we write
    \[
    \sum_{\ww{b}\in\Ical}\left(\sum_{\ww{a}\in\Ical}\epsilon_{\ww{a},\ww{b}}x_{\ww{a}}\right)*\bfe(x_{\ww{b}})=0.
    \]
    By Lemma \ref{lem-bfe-basis}, for any index $\ww{b}\in\Ical$, we have 
    \[
    \sum_{\ww{a}\in\Ical}\epsilon_{\ww{a},\ww{b}}x_{\ww{a}}=0.
    \]
    Since $\{x_{\ww{a}}:\ww{a}\in \Ical\}$ forms an $\mathbb{F}_p$-basis of $\mathcal{R}$, we see that $\epsilon_{\ww{a},\ww{b}}=0$ for all $\ww{a},\ww{b}\in\Ical$ and $\mathscr{B}$ is $\mathbb{F}_p$-linearly independent.
    Therefore, $\mathscr{B}$ is an $\mathbb{F}_p$-basis of $\Ecal$.

    As $\{x_{\ww{a}}\otimes x_{\ww{b}} : \ww{a}, \ww{b}\in \Ical\}$ and $\mathscr{B}$ are $\mathbb{F}_p$-bases of $\Rcal\otimes_{\FF_p} \Rcal$ and $\mathcal{E}$, respectively, it follows immediately that $\phi$ is a well-defined isomorphism of $\bbF_p$-vector spaces.
    Hence, it remains to show that $\phi$ is an $\Fp$-algebra homomorphism.
    Given indices $\ww{a},\ww{b},\ww{c},\ww{d}\in\Ical$, we have
    \begin{align*}
        \phi((x_{\ww{a}} \otimes x_{\ww{b}}) * (x_{\ww{c}} \otimes x_{\ww{d}})) &= \phi((x_{\ww{a}} * x_{\ww{c}}) \otimes (x_{\ww{b}} * x_{\ww{d}})) \\
        &= (x_{\ww{a}} * x_{\ww{c}}) * \bfe(x_{\ww{b}} * x_{\ww{d}})  \\
        &= (x_{\ww{a}} * x_{\ww{c}}) * (\bfe(x_{\ww{b}}) * \bfe(x_{\ww{d}})) \quad \text{(by Proposition \ref{prop-bfe-is-alg-hom})} \\
        &=((x_{\ww{a}} * x_{\ww{c}}) * \bfe(x_{\ww{b}})) * \bfe(x_{\ww{d}}) \quad \text{(by Corollary \ref{cor-R-ass})} \\
        &=(x_{\ww{a}} * (x_{\ww{c}} * \bfe(x_{\ww{b}}))) * \bfe(x_{\ww{d}}) \quad \text{(by Corollary \ref{cor-R-ass})} \\
        &=(x_{\ww{a}} * (\bfe(x_{\ww{b}}) * x_{\ww{c}})) * \bfe(x_{\ww{d}}) \\
        &=((x_{\ww{a}} * \bfe(x_{\ww{b}})) * x_{\ww{c}}) * \bfe(x_{\ww{d}}) \quad\text{(by Corollary \ref{cor-R-ass})} \\
        &=(x_{\ww{a}} * \bfe(x_{\ww{b}})) * (x_{\ww{c}} * \bfe(x_{\ww{d}})) \quad\text{(by Corollary \ref{cor-R-ass})} \\
        &=\phi(x_{\ww{a}}\otimes x_{\ww{b}}) * \phi(x_{\ww{c}} \otimes x_{\ww{d}}).
    \end{align*}
    Since $\mathcal{E}$ is isomorphic to $\mathcal{R}\otimes_{\bbF_p} \Rcal$ and the latter one is a commutative associative $\Fp$-algebra by Corollary \ref{cor-assoc-of-R}, $\Ecal$ is a commutative associative $\Fp$-algebra. The proof is completed.
\end{proof}

\subsection{Further discussion of \texorpdfstring{$\Ecal$}{E}}

In this section, we prove that any Hopf algebra structure on $\Rcal$ naturally induces a corresponding Hopf algebra structure on $\Ecal$.
We start with a review of the definition of Hopf algebras.

\begin{df}[Hopf $\mathcal{A}$-algebra]\label{df-Hopf-A-algebra}
    Let $\mathcal{A}$ be a commutative ring with unity.
    A \textit{Hopf $\mathcal{A}$-algebra} is a tuple $(\Bcal,\ast,u,\Delta,\varepsilon,S)$ consisting of a commutative associative $\Acal$-algebra $(\Bcal,\ast,u)$ and three $\mathcal{A}$-algebra homomorphisms such that the following diagrams commute:
    \begin{enumerate}
        \item a coproduct $\Delta:\mathcal{B}\to \mathcal{B}\otimes_{\mathcal{A}} \mathcal{B}$ satisfying the following coassociativity diagram
        \[
        \begin{tikzcd}
        	{\Bcal\otimes_{\Acal}\Bcal\otimes_{\Acal}\Bcal} & {\Bcal\otimes_{\Acal}\Bcal} \\
        	{\Bcal\otimes_{\Acal}\Bcal} & \Bcal
        	\arrow["{\id\otimes\Delta}"', from=1-2, to=1-1]
        	\arrow["{\Delta\otimes \id}", from=2-1, to=1-1]
        	\arrow["\Delta"', from=2-2, to=1-2]
        	\arrow["\Delta", from=2-2, to=2-1]
        \end{tikzcd}
        \]
        \item a counit $\varepsilon:\mathcal{B}\to \mathcal{A}$ satisfying
        \[
        \begin{tikzcd}
        	{\Bcal\otimes_{\Acal}\Acal} & {\Bcal\otimes_{\Acal}\Bcal} & {\Acal\otimes_{\Acal}\Bcal} \\
        	& \Bcal
        	\arrow["{\id\otimes\varepsilon}"', from=1-2, to=1-1]
        	\arrow["{\varepsilon\otimes\id}", from=1-2, to=1-3]
        	\arrow["\sim"', from=2-2, to=1-1, sloped]
        	\arrow["\Delta"', from=2-2, to=1-2]
        	\arrow["\sim"', from=2-2, to=1-3, sloped]
        \end{tikzcd}
        \]
        \item an antipode $S:\mathcal{B}\to \mathcal{B}$ satisfying
        \[
        \begin{tikzcd}
        	& {\Bcal\otimes_{\Acal}\Bcal} && {\Bcal\otimes_{\Acal}\Bcal} \\
        	\Bcal && \Acal && \Bcal \\
        	& {\Bcal\otimes_{\Acal}\Bcal} && {\Bcal\otimes_{\Acal}\Bcal}
        	\arrow["{S\otimes \id}", from=1-2, to=1-4]
        	\arrow["\ast", from=1-4, to=2-5]
        	\arrow["\Delta", from=2-1, to=1-2]
        	\arrow["\varepsilon", from=2-1, to=2-3]
        	\arrow["\Delta"', from=2-1, to=3-2]
        	\arrow["u", from=2-3, to=2-5]
        	\arrow["{\id\otimes S}"', from=3-2, to=3-4]
        	\arrow["\ast"', from=3-4, to=2-5]
        \end{tikzcd}
        \]
    \end{enumerate}
    In this case, $(\Delta,\varepsilon,S)$ is called a \textit{Hopf $\Acal$-algebra structure} on $(\Bcal,\ast,u)$.
\end{df}
\begin{df}[Hopf $\mathcal{A}$-algebra homomorphism]\label{df-Hopf-A-algebra-homomorphism}
    Let $(\Bcal,*,u,\Delta,\varepsilon,S)$ and $(\Ccal,*',u',\Delta',\varepsilon',\allowbreak S')$ be two Hopf $\Acal$-algebras.
    A \textit{Hopf $\mathcal{A}$-algebra homomorphism from $\Bcal$ to $\Ccal$} is an $\Acal$-algebra homomorphism $\psi:\Bcal\to \Ccal$ such that the following diagram commutes:
    \[
    \begin{tikzcd}
    	{\Bcal\otimes_{\Acal}\Bcal} & {\Ccal\otimes_{\Acal}\Ccal} \\
    	\Bcal & \Ccal
    	\arrow["{\psi\otimes\psi}", from=1-1, to=1-2]
    	\arrow["\Delta", from=2-1, to=1-1]
    	\arrow["\psi"', from=2-1, to=2-2]
    	\arrow["{\Delta'}"', from=2-2, to=1-2]
    \end{tikzcd}
    \]
\end{df}
\begin{rem}
    If $\psi: \Bcal \to \Ccal$ is a Hopf $\Acal$-algebra homomorphism, then the following diagrams commute
    \[
    \begin{tikzcd}
    	\Bcal && \Ccal \\
    	& \Acal
    	\arrow["\psi", from=1-1, to=1-3]
    	\arrow["\varepsilon"', from=1-1, to=2-2]
    	\arrow["{\varepsilon'}", from=1-3, to=2-2]
    \end{tikzcd}
    \quad
    \text{and}
    \quad
    \begin{tikzcd}
    	\Bcal & \Ccal \\
    	\Bcal & \Ccal
    	\arrow["\psi", from=1-1, to=1-2]
    	\arrow["S"', from=1-1, to=2-1]
    	\arrow["{S'}", from=1-2, to=2-2]
    	\arrow["\psi"', from=2-1, to=2-2]
    \end{tikzcd}
    \]
\end{rem}

We have the following theorem.

\begin{thm}\label{thm-unique-Hopf-alg-structure-extend-R}
    Let $(\Delta,\varepsilon,S)$ be any Hopf $\mathbb{F}_p$-algebra structure on $(\mathcal{R},*)$. There is a unique Hopf $\mathbb{F}_p$-algebra structure $(\Delta^{\mathrm{e}},\varepsilon^{\mathrm{e}},S^{\mathrm{e}})$ on $(\mathcal{E},\ast)$ such that the inclusion map $\iota:\mathcal{R}\hookrightarrow \mathcal{E}$ and $\bfe:\mathcal{R}\to \mathcal{E}$ are Hopf $\mathbb{F}_p$-algebra homomorphisms.
     Moreover, endow the Hopf $\mathbb{F}_p$-algebra structure on $\mathcal{R}\otimes_{\mathbb{F}_p} \mathcal{R}$ via the structure induced by the product $\Spec \mathcal{R}\times_{\mathbb{F}_p} \Spec \mathcal{R}$ of $\mathbb{F}_p$-group schemes.
        Then the unique $\mathbb{F}_p$-linear map
        \[
        \phi:\mathcal{R}\otimes_{\mathbb{F}_p} \mathcal{R}\to \Ecal,\quad x_{\ww{a}}\otimes x_{\ww{b}}\mapsto x_{\ww{a}}*\bfe(x_{\ww{b}})
        \]
        defined in Theorem~\ref{thm-assoc-of-E} becomes an isomorphism of Hopf $\mathbb{F}_p$-algebras.
        In other words, we have
        \[
        \Spec \Rcal \times_{\FF_p} \Spec \Rcal \simeq \Spec \Ecal
        \]
        as $\FF_p$-group schemes.
\end{thm}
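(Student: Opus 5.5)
The whole argument is a transport of structure along the algebra isomorphism $\phi$ of Theorem~\ref{thm-assoc-of-E}. First I endow $\Rcal\otimes_{\Fp}\Rcal$ with the tensor-product Hopf structure, which corresponds to $\Spec\Rcal\times_{\Fp}\Spec\Rcal$. Its coproduct is
\[
\Delta^{\otimes}=(\id\otimes\tau\otimes\id)\circ(\Delta\otimes\Delta),
\]
where $\tau$ is the flip. Its counit is $\varepsilon\otimes\varepsilon$ followed by multiplication in $\Fp$, and its antipode is $S\otimes S$. All the axioms of Definition~\ref{df-Hopf-A-algebra} hold because $\Rcal$ is commutative and associative (Corollary~\ref{cor-assoc-of-R}), so $\Rcal\otimes\Rcal$ is a commutative Hopf algebra.

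\textbf{Existence.} Define
\[
\Delta^{\mathrm e}:=(\phi\otimes\phi)\circ\Delta^{\otimes}\circ\phi^{-1},\qquad
\varepsilon^{\mathrm e}:=\varepsilon^{\otimes}\circ\phi^{-1},\qquad
S^{\mathrm e}:=\phi\circ S^{\otimes}\circ\phi^{-1}.
\]
Since $\phi$ is an algebra isomorphism, each of these is an algebra homomorphism, every Hopf diagram is carried over verbatim, and $\phi$ becomes a Hopf isomorphism by construction. Next I check that $\iota$ and $\bfe$ are Hopf homomorphisms. Observe that $\iota=\phi\circ j_1$ and $\bfe=\phi\circ j_2$, where $j_1(\afk)=\afk\otimes1$ and $j_2(\afk)=1\otimes\afk$; this uses $\phi(x_{\ww{a}}\otimes1)=x_{\ww{a}}$ and $\phi(1\otimes x_{\ww{b}})=\bfe(x_{\ww{b}})$. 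The maps $j_1,j_2$ are Hopf homomorphisms into the tensor Hopf algebra, since
\[
\Delta^{\otimes}(\afk\otimes1)=\sum(\afk_{(1)}\otimes1)\otimes(\afk_{(2)}\otimes1),
\]
and similarly for $j_2$. Composites of Hopf homomorphisms are Hopf homomorphisms, so $\iota$ and $\bfe$ are as well.

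\textbf{Uniqueness.} Suppose $(\Delta',\varepsilon',S')$ is another Hopf structure on $\Ecal$ for which $\iota$ and $\bfe$ are Hopf homomorphisms. Then $\Delta'$ is an algebra homomorphism that agrees with $\Delta^{\mathrm e}$ on $\iota(\Rcal)$ and on $\bfe(\Rcal)$. Every element of $\Ecal$ is a sum of products $x_{\ww{a}}\ast\bfe(x_{\ww{b}})$ (the basis $\mathscr B$ in the proof of Theorem~\ref{thm-assoc-of-E}), so $\Delta'=\Delta^{\mathrm e}$. The same argument applies to the counit: the remark after Definition~\ref{df-Hopf-A-algebra-homomorphism} gives that $\varepsilon'$ agrees with $\varepsilon$ on $\iota(\Rcal)$ and on $\bfe(\Rcal)$, and $\varepsilon'$ is multiplicative. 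For the antipode, either the same generation argument applies via the same remark, or one uses the standard fact that the antipode is determined by the bialgebra structure as the convolution inverse of $\id$.

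\textbf{Main obstacle.} I expect the only real care to be needed in the bookkeeping of the $\Spec$ identification: verifying that the Hopf structure coming from the fibre product of group schemes is exactly $\Delta^{\otimes}$ with the middle flip, and that $\varepsilon^{\otimes}$ and $S^{\otimes}$ are as stated. This is standard duality between affine group schemes and commutative Hopf algebras, and with it the isomorphism $\Spec\Rcal\times_{\Fp}\Spec\Rcal\simeq\Spec\Ecal$ of group schemes follows from $\phi$ being a Hopf isomorphism.
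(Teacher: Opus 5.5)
Your proposal is correct and matches the paper's proof. Existence is the same transport of the product Hopf structure along $\phi$, using $\iota=\phi\circ j_1$ and $\bfe=\phi\circ j_2$; your counit $\varepsilon^{\otimes}\circ\phi^{-1}$ is the correct formula, where the paper writes $\varepsilon\circ\phi^{-1}$. For uniqueness, the paper uses that $\Rcal\otimes_{\Fp}\Rcal$ is the coproduct of commutative Hopf algebras to force $\phi$ to be a Hopf map for the competing structure, whereas you check directly that $\Delta',\varepsilon',S'$ are algebra maps agreeing with $\Delta^{\mathrm e},\varepsilon^{\mathrm e},S^{\mathrm e}$ on the generating subalgebras $\iota(\Rcal)$ and $\bfe(\Rcal)$, which is a hands-on version of the same argument.
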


\begin{proof}
    Fix a Hopf $\bbF_p$-algebra structure $(\Delta,\varepsilon,S)$ on $(\Rcal,\ast)$, we obtain an $\FF_p$-group scheme structure on $\Spec \Rcal$.
    This induces a natural $\FF_p$-group scheme structure on the product $\Spec \Rcal\times_{\FF_p} \Spec \Rcal$ and hence $\Rcal\otimes_{\FF_p} \Rcal$ is endowed an Hopf $\FF_p$-algebra structure $(\Tilde{\Delta},\Tilde{\varepsilon},\Tilde{S})$. 
    Moreover, let $\operatorname{pr}_1,\operatorname{pr}_2:\Spec \Rcal\times_{\FF_p} \Spec \Rcal\to \Spec \Rcal$ be the first (resp. second) projection.
    Then $\operatorname{pr}_1,\operatorname{pr}_2$ are both $\FF_p$-group scheme homomorphisms.
    In other words, $i_1,i_2:\Rcal\to \Rcal\otimes_{\FF_p} \Rcal$ defined by $i_1(\afk)=\afk\otimes 1$ and $i_2(\bfk)=1\otimes \bfk$ are both Hopf $\FF_p$-algebra homomorphisms.

    For the existence part, by Theorem \ref{thm-assoc-of-E}, the map $\phi:\Rcal\otimes_{\FF_p} \Rcal\to \Ecal$ defined by $\phi(x_{\ww{a}}\otimes x_{\ww{b}})=x_{\ww{a}} * \bfe(x_{\ww{b}})$ is an isomorphism of $\FF_p$-algebras, so $\phi$ induces a Hopf $\FF_p$-algebra structure 
    \[
    (\Delta^{\mathrm{e}},\varepsilon^{\mathrm{e}},S^{\mathrm{e}}):=((\phi\otimes \phi)\circ \Tilde{\Delta}\circ \phi^{-1}, \varepsilon\circ \phi^{-1}, \phi\circ \Tilde{S}\circ \phi^{-1})
    \]
    on $(\Ecal,\ast)$ and $\phi$ becomes a Hopf $\FF_p$-algebra isomorphism.
    Since $\iota=\phi\circ i_1$ and $\bfe=\phi\circ i_2$, $\iota$ and $\bfe$ are Hopf $\FF_p$-algebra homomorphism.

    Finally, we prove the uniqueness part.
    Assume that $(\Delta',\varepsilon',S')$ is another Hopf $\FF_p$-algebra structure on $(\Ecal,\ast)$ such that $\iota,\bfe:\Rcal\to \Ecal$ are both Hopf $\FF_p$-algebra homomorphisms.
    Since the tensor product is the coproduct in the category of Hopf $\FF_p$-algebras, there is a unique Hopf $\FF_p$-algebra homomorphism $\psi:\Rcal\otimes_{\FF_p} \Rcal\to \Ecal$ such that $\iota=\psi\circ i_1$ and $\bfe=\psi\circ i_2$.
    On the other hand, in the category of commutative associative $\FF_p$-algebra, we have $\iota=\phi\circ i_1$ and $\bfe=\phi\circ i_2$.
    By the universal property of the tensor product, it follows that $\psi=\phi$, which is an isomorphism of $\FF_p$-algebras by Theorem \ref{thm-assoc-of-E}.
    Thus, $\psi=\phi$ is a Hopf $\Fp$-algebra isomorphism.
    We obtain
    \begin{align*}
        (\Delta',\varepsilon',S') 
        &=((\psi\otimes \psi)\circ \Tilde{\Delta}\circ \psi^{-1}, \varepsilon\circ \psi^{-1}, \psi\circ \Tilde{S}\circ \psi^{-1}) \\
        &=((\phi\otimes \phi)\circ \Tilde{\Delta}\circ \phi^{-1}, \varepsilon\circ \phi^{-1}, \phi\circ \Tilde{S}\circ \phi^{-1})=(\Delta^{\mathrm{e}},\varepsilon^{\mathrm{e}},S^{\mathrm{e}})
    \end{align*}
    and the proof is completed.
\end{proof}

\begin{rem}
    Let $(\Delta, \varepsilon, S)$ be a fixed Hopf $\FF_p$-algebra structure on $\Rcal$ and unique Hopf $\mathbb{F}_p$-algebra structure $(\Delta^{\mathrm{e}},\varepsilon^{\mathrm{e}},S^{\mathrm{e}})$ on $(\mathcal{E},\ast)$ such that the inclusion map $\iota:\mathcal{R}\hookrightarrow \mathcal{E}$ and $\bfe:\mathcal{R}\to \mathcal{E}$ are Hopf $\mathbb{F}_p$-algebra homomorphisms.
    We may write down the maps $\Delta^{\mathrm{e}}$, $\varepsilon^{\mathrm{e}}$, and $S^{\mathrm{e}}$ explicitly in terms of $\Delta, \varepsilon$ and $S$ as follows.
    \begin{itemize}
        \item $\Delta^{\mathrm{e}}: \Ecal \to \Ecal \otimes_{\FF_p} \Ecal$ is given by the following. 
        For any indices $\ww{a},\ww{b}\in \Ical$,
        
        \begin{align*}
            &\Delta^{\mathrm{e}}(x_{\ww{a}}) = \Delta(x_{\ww{a}}) \\
            &\Delta^{\mathrm{e}}(y_{\ww{b}})
            = (\bfe\otimes \bfe)(\Delta(x_{\ww{b}})) -\sum_{i=0}^{\dep(\ww{a})-1}\Delta(x_{\ww{b}^{(i)}})*\Delta^{\mathrm{e}}( y_{\ww{b}_{(i)}}). \\
            &\Delta^{\mathrm{e}}(x_{\ww{a}} y_{\ww{b}})
            = \Delta(x_{\ww{a}}) \ast \Delta^{\mathrm{e}}(y_{\ww{b}}). 
        \end{align*}

        \item $\varepsilon: \Ecal \to \FF_p$ is given by the following.
        For any indices $\ww{a},\ww{b}\in \Ical$,
        
        \begin{align*}
            &\varepsilon^{\mathrm{e}}(x_{\ww{a}})=\varepsilon(x_{\ww{a}}). \\
            &\varepsilon^{\mathrm{e}}(y_{\ww{b}})=\varepsilon(x_{\ww{b}})-\sum_{i=0}^{\dep(\ww{b})-1}\varepsilon(x_{\ww{b}^{(i)}})\varepsilon^{\mathrm{e}}(y_{\ww{b}_{(i)}}). \\
            &\varepsilon^{\mathrm{e}}(x_{\ww{a}}y_{\ww{b}})=\varepsilon(x_{\ww{a}})\varepsilon^{\mathrm{e}}(y_{\ww{b}}).
        \end{align*}

        \item $S: \Ecal \to \Ecal$ is given by the following.
        For any indices $\ww{a},\ww{b}\in \Ical$,
        
        \begin{align*}
            &S^{\mathrm{e}}(x_{\ww{a}})=S(x_{\ww{a}}). \\
            &S^{\mathrm{e}}(y_{\ww{b}})
            = \bfe(S(x_{\ww{b}}))-\sum_{i=0}^{\dep(\ww{b})-1} S(x_{\ww{b}^{(i)}}) * S^{\mathrm{e}}(y_{\ww{b}_{(i)}}). \\
            &S^{\mathrm{e}}(x_{\ww{a}}y_{\ww{b}}) = S(x_{\ww{a}}) * S^{\mathrm{e}}(y_{\ww{b}}).
        \end{align*}
    \end{itemize}
\end{rem}

\section*{Acknowledgments}

The authors are grateful to Chieh-Yu Chang for suggesting this project and for his meticulous reading of the manuscript. His insightful remarks significantly enhanced the development of this work.
We also acknowledge the financial support from the National Science and Technology Council (NSTC) over the past few years.
\sloppy
\printbibliography

@article{Chen2015,
	author = {Huei-Jeng Chen},
	title = {On shuffle of double zeta values over {$\mathbb{F}_q[t]$}},
	journal = {J. Number Theory},
	year = {2015},
	volume = {148},
	pages = {153--163},
}

@article{Chen2017,
	author = {Huei-Jeng Chen},
	title = {On shuffle of double Eisenstein series in positive characteristic},
	journal = {J. Th{\'e}or. Nombres Bordeaux},
	year = {2017},
	volume = {29},
	number = {3},
	pages = {815--825},
}

@article{Gos1980,
	author = {David Goss},
	title = {The algebraist's upper half-plane},
	journal = {Bull. Amer. Math. Soc. (N.S.)},
	year = {1980},
	volume = {2},
	number = {3},
	pages = {391--415},
}

@article{Tha09R1,
    author = {Dinesh S. Thakur},
    title = {Relations between Multizeta Values for $\mathbb{F}_q[t]$},
    journal = {Int. Math. Res. Not. IMRN},
    year = {2009},
    volume = {2009},
    number = {12},
    pages = {2318--2346},
}

@article{thakur2010shuffle,
	author = {Dinesh S. Thakur},
	title = {Shuffle relations for function field multizeta values},
	journal = {Int. Math. Res. Not. IMRN},
	year = {2010},
	volume = {2010},
	number = {11},
	pages = {1973--1980},
}

@phdthesis{Shi2018,
	author = {Shuhui Shi},
	title = {Multiple zeta values over {$\mathbb{F}_q[t]$}},
	school = {University of Rochester},
	year = {2018},
}

@book{thakur2004function,
	title={Function Field Arithmetic},
	author={Dinesh S. Thakur},
	year={2004},
	publisher={World Scientific Publishing Co. Pte. Ltd.},
}

@article{Drinfeld1974,
	author    = {Vladimir G. Drinfeld},
	title     = {Elliptic modules},
	journal   = {Math. USSR Sb.},
	volume    = {23},
	number    = {4},
	pages     = {561--592},
	year      = {1974},
}

@article{bt2018double,
	author    = {Henrik Bachmann and Koji Tasaka},
	title     = {The double shuffle relations for multiple Eisenstein series},
	journal   = {Nagoya Math. J.},
	volume    = {230},
	pages     = {180--212},
	year      = {2018},
}

@mastersthesis{bachmann2012multiple,
	author = {Henrik Bachmann},
	title = {Multiple Zeta-Werte und die Verbindung zu Modulformen durch multiple Eisensteinreihen},
	school = {Universität Hamburg},
	year = {2012},
}

@inproceedings{gkz2006double,
	author = {Herbert Gangl and Masanobu Kaneko and Don Zagier},
	title = {Double zeta values and modular forms},
	booktitle = {Automorphic Forms and Zeta Functions},
	pages = {71--106},
	year = {2006},
	publisher = {World Scientific},
}

@book{goss1996basic,
	title={Basic Structures of Function Field Arithmetic},
	author={David Goss},
	series={Ergeb. Math. Grenzgeb. (3)},
	volume={35},
	year={1996},
	publisher={Springer-Verlag Berlin Heidelberg}
}

@article{gekeler1988coefficients,
	title = {On the coefficients of Drinfeld modular forms},
	author = {Ernst-Ulrich Gekeler},
	pages = {667--700},
	volume = {93},
	journal={Invent. Math.},
	year = {1988},
}

@article{bbp2024drinfeld,
  title={Drinfeld Modular Forms of Arbitrary Rank},
  author={Dirk Basson and Florian Breuer and Richard Pink},
  journal={Mem. Amer. Math. Soc.},
  year={2024},
  volume={304},
  number={1531},
}

@misc{ikldp2023hopf,
	title={Hopf algebras and multiple zeta values in positive characteristic},
	author={Bo-Hae Im and Hojin Kim and Khac Nhuan Le and Ngo Dac, Tuan and Lan Huong Pham},
	year={2023},
	note={arXiv preprint, arXiv:2301.05906},
}

@misc{CCHT2025,
    title={On $q$-Shuffle Relations for Multiple Eisenstein Series of Arbitrary Rank in Positive Characteristic}, 
    author={Ting-Wei Chang and Song-Yun Chen and Fei-Jun Huang and Hung-Chun Tsui},
    year={2025},
    note={arXiv preprint, arXiv:2504.18879},
}

@article{Pel2025,
    author    = {Federico Pellarin},
    title     = {The Analytic Theory of Vectorial Drinfeld Modular Forms},
    journal   = {Mem. Amer. Math. Soc.},
    volume    = {312},
    year      = {2025},
    number    = {1581},
}

@article{Gek25-DMF-VII,
    title={On Drinfeld modular forms of higher rank VII: Expansions at the boundary}, 
    author={Ernst-Ulrich Gekeler},
    year={2025},
    journal = {J. Number Theory},
    volume  = {269},
    pages   = {260--340},
    year    = {2025},
}

@misc{bachmann2026relations,
  author       = {Henrik Bachmann and Hayato Kanno},
  title        = {Relations and Derivatives of Multiple Eisenstein Series},
  year         = {2026},
  note         = {arXiv preprint, arXiv:2602.08176},
}

\end{document}